\documentclass{article}

 \usepackage{graphicx}
\usepackage{hyperref}
\usepackage{indentfirst,csquotes}
\usepackage{etoolbox}
\usepackage{subcaption}
\usepackage{mathrsfs}
\usepackage{blkarray}
\usepackage{amsmath,amssymb,amsthm,enumerate,mathrsfs}
\newtheorem{theorem}{Theorem}[section]
\newtheorem{corollary}[theorem]{Corollary}
\newtheorem{lemma}[theorem]{Lemma}
\newtheorem{proposition}[theorem]{Proposition}
\theoremstyle{definition}
\newtheorem{definition}[theorem]{Definition}

\newtheorem{remark}[theorem]{Remark}

\usepackage{algorithm, algorithmic}

\makeatletter
\renewcommand{\p@algorithm}{\arabic{algorithm}\expandafter\@gobble}
\makeatother 

\newcommand{\INPUT}{\item[\textbf{Input:}]}
\newcommand{\OUTPUT}{\item[\textbf{Output:}]}
\newcounter{step}[algorithm]
\setcounter{step}{0}
\newcommand\STEP[2][\(\triangleright\)]{%
	\refstepcounter{step}
	\vskip 0.25\baselineskip
	\item[]\hskip -\algorithmicindent #1 \textbf{Step \arabic{step}}%
	\ifthenelse{\equal{\unexpanded{#2}}{}}{}{ (\texttt{#2})}%
	\textbf{.}%
}
 %

\def\algor#1\end{%
	\noindent\fbox{%
	\begin{minipage}[b]{\dimexpr\columnwidth-\algorithmicindent\relax}
	\begin{algorithmic}
	#1
	\end{algorithmic}
	\end{minipage}
	}%
\end}

\numberwithin{equation}{section}

%
\setlength\topmargin{-12pt}
\setlength\headheight{21.6pt}
\setlength\headsep{16.8pt}

\setlength\textheight{20.8cm}
\setlength\textwidth{14cm}


\DeclareMathOperator{\rank}{rank}

\begin{document}
\makeatletter

\begin{center}
\large{\bf Linear programming for finite-horizon vector-valued Markov decision processes}
\end{center}\vspace{5mm}
\begin{center}

\textsc{Anas Mifrani, Dominikus Noll}\end{center}

\vspace{2mm}

\footnotesize{
\noindent\begin{minipage}{14cm}
{\bf Abstract:}
We propose a vector linear programming formulation for a non-stationary, finite-horizon Markov decision process
with vector-valued rewards. Pareto efficient policies are shown to correspond to
efficient solutions of the linear program, and vector linear programming theory allows us
to fully characterize deterministic efficient policies. 
An algorithm for enumerating all efficient deterministic policies is presented then tested numerically in an engineering application.
\end{minipage}
 \\[5mm]

\noindent{\bf Keywords:} {Multi-objective Markov decision process, efficient policy, multi-objective linear program, efficient vertex}\\
\noindent{\bf Mathematics Subject Classification:} {90C40, 90C05, 90C29}

\hbox to14cm{\hrulefill}\par






\section{Introduction}
\label{sect_intro}
It is not uncommon in practical Markov decision process (MDP) models that 
the decision maker has to cope with several -- sometimes competing -- objectives which cannot readily be subsumed under a single, scalar-valued reward function.  
%
%
This has been a strong incentive for
researchers to investigate \textit{vector-valued Markov decision processes} (vMDPs); see, e.g., \cite{8, 21, 20, 10, 17}.
A central task has been to expand MDP theory to the vector-valued case. This is by no means straightforward, as the first author has recently shown. He finds, for example, that not all solutions of the functional equations for finite vMDPs \cite{21} are achieved by Markov policies, contrary to what obtains in the corresponding scalar-valued case \cite{13}.  
%
%

It is well known that for criteria which are linear in the rewards, an MDP can be formulated and optimized as a linear program (LP) (see \cite{16} for an example in the infinite-horizon case, and \cite{4} for its finite-horizon analogue). This raises the question whether, similarly, \textit{vector linear programming} (vLP) may be employed in the optimal control of vMDPs. For infinite-horizon vMDPs, Novák \cite{15} has shown that this is indeed possible. However, such a result has yet to be presented for the finite horizon case, and this is not completely surprising in view of the above-mentioned discrepancy between finite MDPs and vMDPs. Presently we shall establish a vLP approach to the optimal control of finite-horizon vMDPs. 

Puterman \cite{16} highlights two features of linear programming that make it preferable to other traditional methods like value or policy iteration. First,
state and control constraints, reflecting limitations on resources and budget, quality standards, or long term viability quests, can be built naturally into the model.  Second, 
the well-explored machinery of LPs offers readily available algorithmic and analytic tools for assessing MDPs. For instance, sensitivity analysis theory \cite{5}, when applied to MDPs, can help quantify how sensitive optimal policies are with regard to changes in the reward function or to modifications in the constraints.
This makes it highly desirable to exploit in much the same way properties of vLPs when solving vMDPs.

Vector linear programs have been studied in a variety of contributions. For instance, Benson 
\cite{2}, Steuer \cite{18} and Yu \cite{23}
investigate existence and structure of efficient solutions; Armand and Malivert
\cite{1} and Evans and Steuer \cite{7} each
develop methods for enumerating efficient solutions or generating a representative sample of the efficient set; and Geoffrion \cite{9}, Evans and Steuer \cite{7}, Benson \cite{2} as well as Ecker and Kouada \cite{6} examine the relation of vLPs to scalar LPs. 
Testing vertices of the constraint polyhedron $P$ for efficiency,
or proceeding from one efficient vertex to another adjacent efficient 
one,
have for instance been addressed
by each of Evans and Steuer \cite{7} and Ecker and Kouada \cite{6}.

The structure of the paper is as follows. 
In Section \ref{sect_problem}, the vMDP is presented in detail. The vLP formulation is obtained in Section \ref{sect_LP}, and
the main result (Theorem \ref{centralthm}) gives equivalence between the controlled vMDP 
and the vLP. Section \ref{sect_consequences} draws 
consequences from the main theorem. In Sections \ref{sect_regular_case} and \ref{sect_general_case}
efficient deterministic policies are shown to correspond to 
efficient basic feasible solutions of the vLP.
Section \ref{sect_algo} presents our algorithm for detecting all efficient deterministic policies. Numerical results demonstrating the usefulness of the algorithm are reported in Section \ref{sect_numeric}. 
\\

\section{Problem setting}
\label{sect_problem}
Let $\geqq$ be the standard product order on $\mathbb{R}^k$, 
and let $V \subseteq \mathbb{R}^k$. We say an element $v \in V$ is \textit{efficient} in $V$, and write $v \in \mathscr E(V)$, if it is maximal with regard to $\geqq$, i.e., if
there exists no $v' \in V$ with $v'\not= v$ such that $v' \geqq v$. If $v, v'\in \mathscr E(V)$ with $v\not= v'$, 
then $v$ and $v'$ are incomparable, i.e., $v \ngeqq v'$ and $v' \ngeqq v$. 

Given a set $P \subseteq \mathbb{R}^n$ and a vector-valued mapping $f: P \to \mathbb{R}^k$, we call 
the problem of simultaneously maximizing $f_1(x),\dots,f_k(x)$
over all $x\in P$, written as 
\begin{equation*}
\begin{aligned}
\operatorname{V-max} \quad & f(x) = (f_1(x), ..., f_k(x)), & \textrm{s.t.} \quad x \in P,\\
\end{aligned}
\end{equation*}
a \textit{vector maximization program}. The goal is to find feasible points $x^*\in P$ such that $f(x^*) \in \mathscr E(f(P))$.  
We call such $x^*$ \textit{efficient solutions},
the set of all efficient solutions being  $P_E$. We call $f(P)$ the {\it value set} of the program,
and $\mathscr{E}(f(P))=f(P_E)$ its \textit{efficient value set}. In the special case where $P$ is a convex polyhedron and the criteria $f_i(x) = c_i^{T}x$ 
are linear, we obtain a multi-objective or vector linear program (vLP)
\begin{equation}
\tag{vLP}
\label{vLP}
\begin{aligned}
\operatorname{V-max} \quad & Cx = (c_1^{T}x, ..., c_k^{T}x)^T, & \textrm{s.t.} \quad x \in P,\\
\end{aligned}
\end{equation}
where $C$ is a $k \times n$-matrix gathering the criteria as $Cx$.  
Here the polyhedron typically has the canonical form
$P = \{x \in \mathbb{R}^n: Ax = b,\, x \geqq 0\}$, with $A$ an $m \times n$ matrix and $b \in \mathbb{R}^m$ a given vector such that $m < n$.
%

The decision model we study in this paper is the following. A controller observes a system at $T$ discrete epochs, $T \geqq2$. At each decision epoch $t = 1, ..., T-1$ the system is in one of a finite number of states, and a control action is selected accordingly. Let $\mathbb{S} = \{1, ..., S\}$ be the set of all states, and for $s \in \mathbb{S}$, let $\mathbb{A}_s = \{1, ..., k_s\}$ be the finite set of actions available in state $s$, with $k_s > 1$ for at least one $s$. Taking an action $a$ in a state $s$ at a decision epoch $t$ generates a vector-valued reward $R_t(s, a) \in \mathbb{R}^k$, and results in the process occupying a new state $j$ at epoch $t+1$ with probability $p_t(j | s, a)$. No actions are taken at epoch $T$, but there is a reward for terminating in state $s\in \mathbb{S}$, denoted by $R_T(s)$. We choose $\alpha(s)$, $s \in \mathbb{S}$, to be positive scalars satisfying $\sum_{s \in \mathbb{S}} \alpha(s) = 1$, with the interpretation that $\alpha(s)$ is the probability that the process starts in state $s$. We refer to this model as a vMDP. 

When the controller takes an action at a decision epoch $t$, they are implementing some \textit{decision rule} $\pi_t$. We construe $\pi_t$ as a mapping from $\mathbb{S}$ to the set of all probability distributions on $\mathbb A = \bigcup_{s \in \mathbb{S}} \mathbb{A}_s$, such that in state $s$ actions $a \in \mathbb{A}_s$ are selected with probability $q(a \,|\, s,\, \pi_t)$. 
In the special case where, for each state $s$, an action $a$ exists which satisfies 
$q(a \,|\, s,\, \pi_t) = 1$, we say that $\pi_t$ is {\it deterministic}.

We call the vector $\pi = (\pi_1, ..., \pi_{T-1})$ 
gathering these decision rules a 
\textit{policy}. Deterministic policies, i.e., policies where all the $\pi_t$ are deterministic, are of particular interest, both theoretically and computationally \cite{16}. We let $\Pi$ denote the set of all 
policies, $\Pi^{D}$ the subset of deterministic policies, $\Pi^E$ the set of all efficient policies, and $\Pi^{ED}$ the set of all efficient
deterministic policies.

Each $\pi = (\pi_1, ..., \pi_{T-1})$ induces a probability measure $\mathbb{P}^{\pi}$ over the space of state-action trajectories through 
\begin{equation*}
\mathbb{P}^{\pi}(\textbf{S}_1 = s) = \alpha(s),
\end{equation*}
\begin{equation*}
\mathbb{P}^{\pi}(\textbf{A}_t = a \,|\, \textbf{S}_1 = s_1,\, \textbf{A}_1 = a_1,\, \textbf{S}_2 = s_2, ...,\, \textbf{A}_{t-1} = a_{t-1},\, \textbf{S}_t = s) = q(a\,|\,s,\, \pi_t),
\end{equation*}
\begin{equation*}
\mathbb{P}^{\pi}(\textbf{S}_{t+1} = j \,|\, \textbf{S}_1 = s_1,\, \textbf{A}_1 = a_1,\, \textbf{S}_2 = s_2, ...,\, \textbf{A}_{t-1} = a_{t-1},\, \textbf{S}_t = s,\, \textbf{A}_t = a) = p_t(j | s, a),
\end{equation*}
where $\textbf{S}_t$ and $\textbf{A}_t$ denote, respectively, the random state and action at epoch $t$. Define the expected total reward accrued over the lifetime of the process if the policy implemented is $\pi$ and the process initially occupies state $s$ as 
\begin{equation*}
v^{\pi}(s) = \mathbb{E}_{s}^{\pi}\biggl[\sum_{t = 1}^{T-1}R_t(\textbf{S}_t, \textbf{A}_t) + R_T(\textbf{S}_T)\biggr],
\end{equation*} 
where the expectation is taken with respect to $\mathbb{P}^{\pi}$, and where it is implicit that $\textbf{A}_t$ follows $\pi_t(\textbf{S}_t)$ for all $t < T$.


With this as background, $\sum_{s \in \mathbb{S}}\alpha(s)v^{\pi}(s)$ represents the expected total reward accrued under $\pi$ if the initial state is distributed according to $\alpha$. We desire policies which yield as large an expected total reward as possible; that is, we wish to solve the vector maximization program 
\begin{equation}
\tag{vMDP}
\begin{aligned}
\label{program}
\operatorname{V-max} \quad & \sum_{s \in \mathbb{S}}\alpha(s)v^{\pi}(s), & \textrm{s.t.} \quad \pi \in \Pi,\\
\end{aligned}
\end{equation}
where we keep for the ease of notation the acronym vMDP for the optimal control problem associated with the vMDP.
We let $V = \bigl\{\sum_{s \in \mathbb{S}}\alpha(s)v^{\pi}(s):\ \pi \in \Pi\bigr\}$ denote the value set of (\ref{program}). 
Efficient solutions $\pi^*$ of (\ref{program}) will be called efficient policies, the set of efficient policies being $\Pi^E$.

The goal of this work is to transform (\ref{program}) 
into an equivalent vLP. 
Due to its more amenable structure, 
the latter may then be used favorably to analyze the vMDP. In particular, 
we will show that under this transformation, policies
are mapped to feasible solutions of the vLP, efficient policies correspond to efficient solutions of the vLP, and most importantly, efficient deterministic policies are in one-to-one correspondence with the basic feasible solutions, or vertices,  of the vLP.

Transforming (\ref{program}) into  an equivalent vLP has, among others, the
benefit that
the onerous task of 
enumerating efficient deterministic policies can be carried out 
via the vLP, where this reduces to
enumerating efficient vertices of $P$. 
Since the efficient solution set $P_E$ of the vLP is a union of faces and generally a {\it non-convex but connected subset of the relative boundary} of $P$ \cite{3}, 
finding all efficient vertices by an exhaustive search is possible
for moderate problem sizes, and we address this algorithmically.


Enumeration of efficient vertices requires
computational tests for determining whether a candidate
vertex given by its basic feasible solution is efficient. Evans and Steuer \cite{7} present a necessary and sufficient condition for efficiency 
in terms of the boundedness of an auxiliary LP. In
the same vein, Ecker and Kouada \cite{6} give a test of
efficiency of an edge incident to a vertex already known as efficient. 
These authors also devise a procedure based on their characterization for detecting all efficient vertices. While Evans and Steuer do not explain how their condition can be leveraged in the context of a search for efficient basic feasible solutions, it can easily be incorporated into Ecker and Kouada's algorithm as a substitute for the efficient edge characterization.
Therefore, as we will demonstrate,
a combination of these methods 
can be used to generate the set $\Pi^{E} \cap \Pi^{D}= \Pi^{ED}$. 

The latter is of significance, 
because as Mifrani \cite{13} has recently shown, the vector extension of the functional equations of finite-horizon MDPs \cite{21} fails to give
an algorithm for constructing efficient Markovian deterministic policies. This leaves an unexpected  gap, which we 
close in this work. 
\\

\section{Linear programming formulation}
\label{sect_LP}
Let $\pi$ be a policy. 
If we let $u_t^{\pi}(s)$ denote the expected total reward generated by $\pi$ from time $t$ onward if the state at that time is $s$, i.e., \begin{equation*}u_t^{\pi}(s) = \mathbb{E}_{s}^{\pi}\biggl[\sum_{i = t}^{T-1}R_i(\textbf{S}_i, \textbf{A}_i) + R_T(\textbf{S}_T)\biggr],\end{equation*} with $u_T^{\pi}(s) = R_T(s)$, then simple probability operations yield the recursion \begin{equation*}u_t^{\pi}(s) = \sum_{a \in \mathbb{A}_s}q(a \,|\, s,\ \pi_t)\biggl[R_t(s, a) + \sum_{j \in \mathbb{S}}p_t(j | s, a)u_{t+1}^{\pi}(j)\biggr]\end{equation*} for all $t = 1, ..., T-1$ and $s \in \mathbb{S}$. By expanding this expression over all future states and epochs, and by noting that $u_1^{\pi}(s) = v^{\pi}(s)$, we obtain
\begin{align}
\label{value}
\begin{split}
v^{\pi}(s) = &\sum_{t = 1}^{T-1}\sum_{j \in \mathbb{S}}\sum_{a \in \mathbb{A}_j} \mathbb{P}^{\pi}(\textbf{S}_t = j, \textbf{A}_t = a \,|\, \textbf{S}_1 = s)R_t(j, a) 
\hspace*{1cm}\\& \hspace*{5cm}+ \sum_{j \in \mathbb{S}}\mathbb{P}^{\pi}(\textbf{S}_T = j\,|\, \textbf{S}_1 = s)R_T(j).
\end{split}
\end{align}
Multiplying by $\alpha(s)$ then summing over the states yields
\begin{equation}\label{eq1}\sum_{s \in \mathbb{S}}\alpha(s)v^{\pi}(s) = \sum_{t = 1}^{T-1}\sum_{s \in \mathbb{S}}\sum_{a \in \mathbb{A}_s} x_{\pi, t}(s, a)R_t(s, a) + \sum_{s \in \mathbb{S}}x_{\pi, T}(s)R_T(s),\end{equation}
where we put
\begin{equation}\label{eq2}x_{\pi, t}(s, a) = \sum_{j \in \mathbb{S}}\alpha(j)\,\mathbb{P}^{\pi}(\textbf{S}_t = s, \textbf{A}_t = a \,|\, \textbf{S}_1 = j)\end{equation} for all $s \in \mathbb{S}$ and $a \in \mathbb{A}_s$ if $t = 1, ..., T-1$, and \begin{equation}\label{eq3}x_{\pi, T}(s) = \sum_{j \in \mathbb{S}}\alpha(j)\,\mathbb{P}^{\pi}(\textbf{S}_T = s \,|\, \textbf{S}_1 = j)\end{equation} for all $s \in \mathbb{S}$.
We shall often call $x_\pi$ a vector, where in fact components have triple indices
$t<T, s\in \mathbb S, a \in \mathbb{A}_s$.

Equation (\ref{eq1}) and the mapping $\pi \mapsto x_\pi$
defined through
(\ref{eq2}) and (\ref{eq3}) shall play a central role in the subsequent development. If $t < T$, $x_{\pi, t}(s, a)$ 
is the joint probability under policy $\pi$ and initial state distribution $\alpha$ that state $s$ is occupied and action $a$ is selected at epoch $t$. If $t = T$, then $x_{\pi, T}(s)$ is the probability that the process terminates in state $s$ under $\pi$ and $\alpha$.


\begin{proposition}
\label{prop1}
For any policy $\pi\in \Pi$, the vector $x_\pi$ satisfies $x_{\pi} \geqq 0$ and the following relations:
\begin{itemize}
\item[\rm{(a)}] For all $j \in \mathbb{S}$, $\sum_{a \in \mathbb{A}_j} x_{\pi, 1}(j, a) = \alpha(j)$.
\item[\rm{(b)}] For all $t = 1, ..., T-2$ and $j \in \mathbb{S}$, 
\begin{equation*}
\sum_{a \in \mathbb{A}_j} x_{\pi, t+1}(j, a) = \sum_{s \in \mathbb{S}}\sum_{a \in \mathbb{A}_s} p_t(j | s, a)x_{\pi, t}(s, a);
\end{equation*}
\item[\rm{(c)}] For all $j \in \mathbb{S}$, $x_{\pi, T}(j) = \sum_{s \in \mathbb{S}}\sum_{a \in \mathbb{A}_s} p_{T-1}(j | s, a)x_{\pi, T-1}(s, a)$;
\end{itemize}
\end{proposition}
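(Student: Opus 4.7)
The plan is to verify each item by a direct probabilistic calculation, relying on the definitions (\ref{eq2})--(\ref{eq3}) together with the law of total probability and the Markov property encoded in the transition kernel $p_t$ and the decision rules $q(\cdot\mid s,\pi_t)$. Nonnegativity is immediate from the definitions: every factor in (\ref{eq2}) and (\ref{eq3}) --- namely $\alpha(j)$ and the conditional probabilities $\mathbb{P}^{\pi}(\cdot\mid\textbf{S}_1 = j)$ --- is nonnegative, so $x_{\pi}\geqq 0$.

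For (a), I would fix $j\in\mathbb{S}$ and sum (\ref{eq2}) over $a\in\mathbb{A}_j$ at $t=1$, using the fact that $\sum_{a\in\mathbb{A}_j}\mathbb{P}^{\pi}(\textbf{S}_1 = j, \textbf{A}_1 = a\mid \textbf{S}_1 = s) = \mathbb{P}^{\pi}(\textbf{S}_1 = j\mid\textbf{S}_1 = s) = \delta_{j,s}$. The outer sum over $s$ weighted by $\alpha(s)$ then collapses to $\alpha(j)$.

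For (b), I would first rewrite, for any initial state $s_0$ and any $t\leqq T-2$,
\begin{equation*}
\mathbb{P}^{\pi}(\textbf{S}_{t+1} = j\mid\textbf{S}_1 = s_0) = \sum_{s\in\mathbb{S}}\sum_{a\in\mathbb{A}_s}\mathbb{P}^{\pi}(\textbf{S}_t = s, \textbf{A}_t = a\mid\textbf{S}_1 = s_0)\,p_t(j\mid s,a),
\end{equation*}
which is just the marginalization identity obtained by inserting a sum over $(\textbf{S}_t,\textbf{A}_t)$ and invoking the conditional probability of the next state given the current state-action pair. Summing the left-hand side over $a\in\mathbb{A}_j$ (using once more that $\sum_{a}q(a\mid j,\pi_{t+1})=1$) gives $\sum_{a\in\mathbb{A}_j}\mathbb{P}^{\pi}(\textbf{S}_{t+1}=j,\textbf{A}_{t+1}=a\mid\textbf{S}_1 = s_0)$. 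Multiplying through by $\alpha(s_0)$, summing over $s_0\in\mathbb{S}$, and recognizing the expressions (\ref{eq2}) on both sides produces (b). Item (c) follows in exactly the same manner by applying the same marginalization at $t=T-1$, except that no action is taken at epoch $T$, so one directly obtains $x_{\pi,T}(j)$ on the left from definition (\ref{eq3}).

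The argument is essentially bookkeeping: no deep obstacle arises, and the Markov structure of $\mathbb{P}^{\pi}$ is used only through the one-step transition probabilities. The only mild subtlety is keeping the conditioning event $\textbf{S}_1 = s_0$ consistent and making sure that the marginalization identity used in (b) and (c) is justified by the total probability rule applied to the intermediate pair $(\textbf{S}_t,\textbf{A}_t)$; once this is stated cleanly, the three assertions drop out after multiplying by $\alpha(s_0)$ and summing.
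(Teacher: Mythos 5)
Your proposal is correct and follows essentially the same route as the paper's proof: nonnegativity read off from the definitions, part (a) by collapsing the sum over actions, and parts (b) and (c) by the total-probability/marginalization identity over the intermediate pair $(\textbf{S}_t,\textbf{A}_t)$ combined with the decomposition of $\mathbb{P}^{\pi}(\textbf{S}_{t+1}=j\mid\cdot)$ over the actions chosen at epoch $t+1$. The only cosmetic difference is that the paper starts from the right-hand side of (b) and manipulates it into the left-hand side, whereas you state the marginalization identity first and then match both sides; the content is identical.
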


\begin{proof}
%
That $x_{\pi} \geqq 0$ is obvious from (\ref{eq2}) and (\ref{eq3}). With regard to part (a), we have
\begin{flalign*}
\sum_{a \in \mathbb{A}_j} x_{\pi, 1}(j, a) &= \sum_{a \in \mathbb{A}_j} \sum_{i \in \mathbb{S}}\alpha(i)\mathbb{P}^{\pi}(\textbf{S}_1 = j, \textbf{A}_1 = a \,|\, \textbf{S}_1 = i)\\
&= \sum_{i \in \mathbb{S}}\alpha(i)\mathbb{P}^{\pi}(\textbf{S}_1 = j \,|\, \textbf{S}_1 = i)\\
&= \alpha(j).
\end{flalign*}

Next, for part (b) we have

\begin{flalign*}
\sum_{s \in \mathbb{S}}\sum_{a \in \mathbb{A}_s} p_t(j | s, a)x_{\pi, t}(s, a) &= \sum_{s \in \mathbb{S}}\sum_{a \in \mathbb{A}_s} p_t(j | s, a)\sum_{i \in \mathbb{S}}\alpha(i)\,\mathbb{P}^{\pi}(\textbf{S}_t = s, \textbf{A}_t = a \,|\, \textbf{S}_1 = i)\\
&= \sum_{i \in \mathbb{S}}\alpha(i)\sum_{s \in \mathbb{S}}\sum_{a \in \mathbb{A}_s} \mathbb{P}^{\pi}(\textbf{S}_{t+1} = j, \textbf{S}_t = s, \textbf{A}_t = a \,|\, \textbf{S}_1 = i)\\
&= \sum_{a \in \mathbb{A}_j}\sum_{i \in \mathbb{S}}\alpha(i)\mathbb{P}^{\pi}(\textbf{S}_{t+1} = j, \textbf{A}_{t+1} = a\,|\, \textbf{S}_1 = i)\\
&= \sum_{a \in \mathbb{A}_j}x_{\pi, t+1}(j, a).
\end{flalign*}

Part (c) follows from a calculation similar to that employed in part (b).
\end{proof}

As we shall see, 
Equation (\ref{eq1}) in tandem with Proposition \ref{prop1} provides the basis for the vLP we seek. 
Proposition \ref{prop1}
suggests the following

\begin{definition}
{\rm (State-action frequencies)}. 
Vectors $x \geqq 0$ satisfying conditions {\rm (a)-(c)} in Proposition
{\rm \ref{prop1}} are called {\rm state-action frequency vectors}.  The quantities
$x_t(j,a)$ are the state-action frequencies. The set
of all state-action frequency vectors is denoted by $P$.
\end{definition}

In this notation Proposition \ref{prop1} tells us that 
the correspondence $\pi \mapsto x_\pi$  defined via (\ref{eq2}) and (\ref{eq3}) 
maps policies $\pi \in \Pi$ into state-action
frequency vectors $x_\pi \in P$. 
The question is therefore whether state-action frequencies already
determine the policy, i.e., whether all state-action frequency vectors $x$ can be realized by a policy
as $x=x_\pi$. In order to investigate this, we have
to consider the notion of {\it reachability} of a state by a policy.

It may happen that a state $s$ is not reachable at an epoch $t$ 
under a policy $\pi$, i.e., 
$\mathbb P^\pi(\textbf{S}_t=s|\textbf{S}_1=j)=0$ for all initial states $j\in \mathbb{S}$. In that case the decisions $a$ 
which $\pi$ makes at epoch $t$ with probabilities $q(a|s,\pi_t)$ 
have
no real influence on the relevant system trajectories. More precisely, when we define
a policy $\pi'$ which differs from $\pi$ only at epoch $t$ and state $s$, that is,
$q(\cdot|s,\pi_{t}')\not= q(\cdot|s,\pi_t)$ at $(s,t)$, while $q(\cdot|s',\pi_{t'}') = q(\cdot|s',\pi_{t'})$ elsewhere, we only modify state-action trajectories occurring with 
zero probability. 
Therefore, the fact that
rewards $R_t(s,a)$ are accrued differently at $(s,t)$ according to $\pi$ or $\pi'$
will  not matter at all, because these
are awarded with zero probability either way. In short, we have
$v^\pi(s) = v^{\pi'}(s)$ for all $s$.

We use this observation to fix decisions irrelevant in this sense in a unique way. 
\begin{definition}
    A policy $\pi$ is {\rm regular} if it satisfies the following
    condition: For all decision epochs $t = 1, ..., T-1$ and states $s$, if $\mathbb P^\pi(\textbf{S}_t=s|\textbf{S}_1=j)=0$ for all $j\in \mathbb{S}$, 
    then $q(1|s,\pi_t)=1$. The set of all regular policies is denoted by $\Pi_r$.
\end{definition}

A given policy $\pi$ may be modified at unreachable
state-epoch pairs $(s,t)$ by setting $q(1|s,\pi_t) = 1$, which 
leads to an equivalent regular policy $\pi'$. This gives an equivalence relation
$\sim$ on the set of all policies, where $\pi_1 \sim \pi_2$ iff both have the same regularization.
In this sense $\Pi_r$ represents the quotient set $\Pi/_\sim$.
From the construction we can see that $\pi_1 \sim \pi_2$ implies 
$v^{\pi_1}(s) = v^{\pi_2}(s)$ for all $s$.
This has the important consequence that we may restrict optimization
in program (\ref{program}) to policies in $\Pi_r$. This will do no harm, as
every efficient $\pi\in \Pi^E$ is equivalent to a regular efficient $\pi_r \in \Pi_r \cap \Pi^E
= \Pi_r^E$.

Unreachable states are caused by the internal dynamics of the process. For example, if $s \in \mathbb{S}$ has $p_{t-1}(s|s',a')=0$ for all $s'\in \mathbb{S}$ and all $a'\in \mathbb{A}_{s'}$, then the probability that any policy will reach $s$ at time $t$ is zero, irrespective of the actions prescribed by that policy at the preceding epochs. However, even if the process allows transition to state $s$, we may find, as the next lemma shows, policies unable to reach it.
\begin{lemma}
\label{unreach1}
    The following are equivalent:
    \begin{enumerate}
       \item[{\rm (i)}] 
       Some policies have unreachable state-epoch pairs. That is, there exist at least one policy $\pi$ and at least one state-epoch pair $(s, t)$ such that for all initial states $j \in \mathbb{S}$, $\mathbb P^\pi(\textbf{S}_t=s|\textbf{S}_1=j)=0$.
       \item[{\rm (ii)}] There exist a state $s$ and an epoch $t = 2, \dots, T$ such that for every state $s' \in \mathbb{S}$ there is at least one $a_{s'}\in \mathbb{A}_{s'}$  with $p_{t-1}(s|s',a_{s'})=0$.
    \end{enumerate}
\end{lemma}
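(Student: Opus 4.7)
The two directions demand rather different work. For (ii) $\Rightarrow$ (i) my plan is to exhibit an explicit witness: pick any deterministic policy $\pi$ whose decision rule $\pi_{t-1}$ in state $s'$ selects the action $a_{s'}$ guaranteed by (ii). Conditioning the transition probability at epoch $t$ on the state at epoch $t-1$ yields
\begin{equation*}
\mathbb{P}^{\pi}(\textbf{S}_t = s \,|\, \textbf{S}_1 = j) = \sum_{s' \in \mathbb{S}} \mathbb{P}^{\pi}(\textbf{S}_{t-1} = s' \,|\, \textbf{S}_1 = j)\, p_{t-1}(s \,|\, s', a_{s'}),
\end{equation*}
and every factor $p_{t-1}(s|s',a_{s'})$ vanishes by (ii), so $(s,t)$ is unreachable from every initial state $j$.

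For (i) $\Rightarrow$ (ii), the naive hope that the pair $(s,t)$ from (i) already satisfies (ii) turns out to be false in general: a policy can avoid $s$ at epoch $t$ simply because some problematic predecessor $s'$ (one for which $p_{t-1}(s|s',a)>0$ for every $a \in \mathbb{A}_{s'}$) is itself never reached at epoch $t-1$, so no constraint on the actions at $s'$ is forced by (i). My plan is a backward induction on the epoch index. Starting with the unreachable pair $(s,t)$ from (i), I assume (ii) fails for it and extract such a problematic $s'$; I then argue that $s'$ must be unreachable at epoch $t-1$ under $\pi$, because if $\mathbb{P}^{\pi}(\textbf{S}_{t-1} = s' \,|\, \textbf{S}_1 = j) > 0$ for some $j$, then any action $a$ with $q(a|s',\pi_{t-1})>0$ would yield a positive-probability path to $(s,t)$, contradicting (i). This produces a strictly earlier unreachable pair $(s', t-1)$, and the argument iterates.

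The induction must terminate at $t=2$: if $(s,2)$ were unreachable under $\pi$ while (ii) still failed, the problematic $s'$ together with the initial state $j=s'$ would force $\mathbb{P}^{\pi}(\textbf{S}_2 = s \,|\, \textbf{S}_1 = s') > 0$ under every action, a contradiction. One also needs the one-line remark that $t \geq 2$ in (i), since $\mathbb{P}^{\pi}(\textbf{S}_1 = s \,|\, \textbf{S}_1 = s) = 1$ excludes $t=1$. The main obstacle, and really the only subtle step, is recognizing that (i) $\Rightarrow$ (ii) is not a statement about the same pair: once one identifies the right reduction $(s,t) \leadsto (s', t-1)$ and checks that unreachability is preserved along it, the remaining bookkeeping is routine.
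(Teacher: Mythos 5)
Your proof is correct. The (ii) $\Rightarrow$ (i) direction matches the paper's: both construct a policy concentrating on the actions $a_{s'}$ at epoch $t-1$ so that every transition into $s$ carries a zero factor (the paper writes this as a sum over full trajectories via the chain rule; your one-step conditioning on $\textbf{S}_{t-1}$ is the same computation, slightly condensed).

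For (i) $\Rightarrow$ (ii) the two arguments differ in how they locate a pair for which (ii) holds, though they rest on the same core observation (a ``problematic'' predecessor $s'$ with $p_{t-1}(s|s',a)>0$ for all $a$ forces reachability of $(s,t)$ whenever $(s',t-1)$ is reachable). The paper quantifies over \emph{all} policies and picks the \emph{first} epoch $t$ at which any policy fails to reach any state; minimality then guarantees that every $(j,t-1)$ is reachable under the offending policy, so the contradiction is immediate and (ii) holds at that minimal pair. You instead fix the single policy $\pi$ and the pair $(s,t)$ from (i) and run a descent: if (ii) fails there, the problematic $s'$ must itself be unreachable at $t-1$ under $\pi$, and you iterate until the base of the process ($\alpha(j)>0$ makes every epoch-$1$ pair reachable, so the descent must halt with (ii) holding at some intermediate pair). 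Your version has the mild advantage of never invoking a statement about all policies simultaneously, at the cost of an explicit induction; the paper's minimality choice buys the same conclusion in one step. You correctly identify the one subtlety --- that (ii) need not hold for the original pair $(s,t)$ from (i) --- which is precisely what the paper's ``first time $t$'' device is there to handle.
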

\begin{proof}
If $t = 2, ..., T$ is an epoch and $s, j$ are states, we denote by $\mathscr T$ the set of all state-action trajectories starting at $j$ in the first epoch and arriving at $s$ at epoch $t-1$, 
that is, the set of all $\tau = (j, a_1, s_2, \dots, a_{t-1}, s)$ where $a_1 \in \mathbb{A}_{j}$, $s_2 \in \mathbb{S}$, \dots, $a_{t-1} \in \mathbb{A}_{s_{t-1}}$. With this notation, the chain rule of conditional probability 
yields
\begin{multline*}
\label{prob_path}
\mathbb{P}^{\pi}(\textbf{S}_t = s | \textbf{S}_1 = j) = \sum_{\tau \in \mathscr T} q(a_1 | j, \pi_1)p_1(s_2 | j, a_1)q(a_2 | s_2, \pi_2)p_2(s_3 | s_2, a_2)\\\dots q(a_{t-1} | s_{t-1}, \pi_{t-1})p_{t-1}(s | s_{t-1}, a_{t-1}).
\end{multline*}
Now if (ii) is satisfied, then it suffices to define $\pi$ such that
$q(a_{s'}|s',\pi_{t-1}) = 1$ for all $s' \in \mathbb{S}$, so that in each trajectory $\tau$ we have a zero factor.

Conversely, assuming (i), there exists a first time  $t$ such that 
some policy cannot reach some $(s,t)$. That means all policies can reach all states
at all times 
$t' \leq t-1$.
The policy $\pi$ which does not reach $(s,t)$ can hence reach all $(j, t-1)$. Suppose that for
one such $j$ we have $p_{t-1}(s|j,a) > 0$ for all $a\in \mathbb{A}_j$. Then as $\pi$ has to pick some $a\in \mathbb{A}_j$ with probability $>0$, it can reach $s$ at time $t$, as follows again from the chain rule above, 
a contradiction. Hence there must exist for every $j$ some $a_j\in \mathbb{A}_j$
such that $p_{t-1}(s|j,a_j)=0$. 
\end{proof}

\begin{lemma}
\label{unreach2}
    The following are equivalent:
    \begin{enumerate}
        \item[{\rm (i')}] Every policy $\pi$ has unreachable state-epoch pairs.
        \item[{\rm (ii')}] There exist a state $s$ and an epoch $t$ such that
        $p_{t-1}(s|s',a')=0$ for every $s'$ and every $a'\in \mathbb{A}_{s'}$. Equivalently, $(s,t)$
cannot be reached by any policy.    
\end{enumerate}
\end{lemma}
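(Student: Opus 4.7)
The plan is to prove the two directions of the equivalence separately. The direction (ii') $\Rightarrow$ (i') is direct: if there exist $s, t$ with $p_{t-1}(s|s',a')=0$ for every $s' \in \mathbb{S}$ and every $a' \in \mathbb{A}_{s'}$, then in the chain-rule expansion of $\mathbb{P}^\pi(\textbf{S}_t = s\,|\, \textbf{S}_1 = j)$ displayed in the proof of Lemma \ref{unreach1}, the last factor $p_{t-1}(s|s_{t-1}, a_{t-1})$ vanishes on every trajectory, regardless of $\pi$ and $j$. Thus $(s,t)$ is unreachable under every policy, yielding (i').

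For the harder direction (i') $\Rightarrow$ (ii'), we will exhibit a fully exploratory policy whose unreachability pattern forces the desired transition structure. Define $\pi^*$ by $q(a|s', \pi^*_{t'}) = 1/k_{s'} > 0$ for every epoch $t'$, state $s'$, and action $a \in \mathbb{A}_{s'}$. Under $\pi^*$ all $\alpha$ and $q$ factors in the chain rule are strictly positive, so a trajectory $\tau$ contributes positive probability if and only if every transition factor $p_i$ appearing in it is positive---a condition depending only on the MDP dynamics. By hypothesis (i'), $\pi^*$ has at least one unreachable state-epoch pair. Since $\alpha(j) > 0$ for every $j$, every pair $(s,1)$ is reachable, so there is a smallest epoch $t^* \geq 2$ at which some state $s^*$ is unreachable under $\pi^*$.

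By the minimality of $t^*$, every pair $(s', t^*-1)$ is reachable under $\pi^*$. Suppose for contradiction that (ii') fails at $(s^*, t^*)$, so that $p_{t^*-1}(s^*|s', a') > 0$ for some $s' \in \mathbb{S}$ and some $a' \in \mathbb{A}_{s'}$. Appending the transition from $s'$ to $s^*$ via $a'$ to a positive-probability trajectory reaching $(s', t^*-1)$ under $\pi^*$---legitimate since $q(a'|s', \pi^*_{t^*-1}) > 0$---produces a positive-probability trajectory ending at $(s^*, t^*)$, contradicting the unreachability of $(s^*, t^*)$ under $\pi^*$. Hence $p_{t^*-1}(s^*|s', a') = 0$ for all $s' \in \mathbb{S}$ and $a' \in \mathbb{A}_{s'}$, establishing (ii'). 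The reformulation asserted in (ii')---that $(s^*, t^*)$ cannot be reached by any policy---then follows immediately by applying the already proved implication (ii') $\Rightarrow$ (i') to this specific pair.

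The main obstacle is the lifting step inside the contradiction: a transition with $p_{t^*-1}(s^*|s', a') > 0$ must be shown to extend to an actual positive-probability trajectory under $\pi^*$. The pathological scenario to rule out is one in which every $s'$ admitting such a transition into $s^*$ is itself unreachable at time $t^*-1$; choosing $t^*$ to be the smallest epoch of unreachability neatly eliminates this case, since then every state is reachable at time $t^*-1$ under $\pi^*$, and the appended trajectory genuinely has positive probability.
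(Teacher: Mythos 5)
Your proof is correct and rests on the same key device as the paper's: a fully mixing policy, under which reachability of a state-epoch pair depends only on the transition probabilities. The paper packages this in a digraph formalism and argues the hard direction by contraposition (building a backward path into every node when (ii') fails), whereas you argue directly via the minimal epoch at which the mixing policy fails to reach some state; this is only a cosmetic reorganization of the same argument, and your handling of the lifting step via minimality of $t^*$ is sound.
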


\begin{proof}
We consider the digraph $\vec{G}=(N,A)$ of nodes $(s,t)$, where we place an arc from
    $(s',t-1)$ to $(s,t)$ iff $\sum_{a'\in \mathbb{A}_{s'}}p_{t-1}(s|s',a') > 0$. 
    This digraph only depends on the process dynamics.

    Now we interpret a policy $\pi$ as selecting among the arcs of $\vec{G}$ only
    those where it gives one of the $a'$
    a positive probability.
  In other words,  $\pi$ selects a subgraph $\vec{G}_\pi=(N,A_\pi)$ of the digraph $\vec{G}$ by letting an
arc of $\vec{G}$ remain in the subgraph $\vec{G}_\pi$
iff $\sum_{a'\in \mathbb{A}_{s'}} p_{t-1}(s|s',a')q(a'|s',\pi_{t-1}) > 0$.
    Then $\pi$ reaches $(s,t)$ iff there exists a directed path from any one of the $(s',1)$ to $(s,t)$
    in $\vec{G}_\pi$, as follows directly from the chain rule of conditional probability.
    
With this interpretation, condition 
    (ii') says that there is no arc from $(s',t-1)$ to $(s,t)$ in $\vec{G}$, and then obviously there is no path
    reaching $(s,t)$ in $\vec{G}$ from the first layer $(j,1)$, as such a path has to go through layer $t-1$. Then obviously neither does such a path exist in any $\vec{G}_\pi$. 
    Hence (ii') implies (i').

    On the other hand, suppose every $\pi$ has unreachable state-epoch pairs, and yet
    (ii') is false.
    The negation of (ii') says
    that for every $(s,t)$ there exists $s'\in \mathbb{S}$
    and $a' \in \mathbb{A}_{s'}$ such that $p_{t-1}(s|s',a') > 0$. In other words,
    there is an arc from $(s',t-1)$ to $(s,t)$ in $\vec{G}$. But we can apply the argument to
    $(s',t-1)$ again, i.e.,  there exist $s''$, $a''\in \mathbb{A}_{s''}$ such that $p_{t-2}(s'|s'',a'')>0$, 
    hence
    there is an arc from $(s'',t-2)$ to $(s',t-1)$.  Going backwards, 
    this gives for every $(s,t)$ a path in $\vec{G}$ from some $(j,1)$ to $(s,t)$.
    In consequence, if we take
    a policy
    $\pi$ which is mixing in the sense that $q(a|s,\pi_{t}) > 0$ for every
    $(s,t)$ and every $a\in \mathbb{A}_s$, 
then $\vec{G}=\vec{G}_\pi$,
    so that every $(s,t)$ can be reached in $\vec{G}_\pi$, a contradiction.
    \end{proof}





When every $\pi$ has unreachable state-epoch pairs, 
there is one $(s,t)$ which none of the $\pi$ can reach. Consequently, if some but not all $\pi$ have unreachable $(s,t)$, then at every
time $t$ there must exist some $(s,t)$ which all $\pi$ can reach.

We say that the vMDP is {\it regular} when situation (i) in Lemma \ref{unreach1} is excluded, that is, when all policies can reach all states at every stage of the decision making. Condition (ii) 
furnishes a simple test of regularity. 
When the process is regular, all policies are regular, i.e.,
$\Pi = \Pi_r$. 
Conversely,
if $k_s > 1$ for all $s\in \mathbb{S}$, then $\Pi = \Pi_r$ implies regularity of the process.

We now proceed to show that
$\pi \to x_\pi$ is in fact a bijection from regular policies
$\Pi_r$ onto state-action frequency vectors $P$. 
In other words, all state-action frequency vectors $x\in P$
arise as $x=x_\pi$ for certain $\pi \in \Pi_r$, and this representation  is unique.
An analogous result for the scalar-valued infinite-horizon case is presented in \cite[pp. 225-226]{16},
where considerations of regularity are unnecessary.  Our proof is constructive and is 
divided into several lemmas and propositions.

As a
first step we show  that $\pi \mapsto x_\pi$ is injective, i.e., 
distinct regular policies $\pi$ give rise to distinct
state-action frequency vectors $x_\pi$. This requires the following
two lemmas.
\begin{lemma}
\label{lem1}
Let $\pi \in \Pi$. The following statements are true:
\begin{itemize}
    \item[{\rm (i)}] The state $s \in \mathbb{S}$ is reachable at epoch
    $t=1,\dots,T-1$ under $\pi$ iff  $\sum_{a \in \mathbb{A}_s} x_{\pi, t}(s, a) > 0$. 
    \item[{\rm (ii)}] The state $s \in \mathbb{S}$ is reachable at epoch $t=T$
    under $\pi$ iff $x_{\pi, T}(s) > 0$.
\end{itemize}
\end{lemma}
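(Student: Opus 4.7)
The plan is to reduce both parts to direct computations from the definitions (\ref{eq2}) and (\ref{eq3}), exploiting the hypothesis from the model that $\alpha(j)>0$ for every $j\in\mathbb{S}$.

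For part (i), I would begin with the identity
\[
\sum_{a \in \mathbb{A}_s} x_{\pi,t}(s,a) = \sum_{j\in\mathbb{S}}\alpha(j)\sum_{a\in\mathbb{A}_s}\mathbb{P}^{\pi}(\textbf{S}_t=s,\textbf{A}_t=a\mid\textbf{S}_1=j),
\]
obtained by interchanging the finite sums in (\ref{eq2}). Since $\{\textbf{A}_t=a : a\in \mathbb{A}_s\}$ partitions $\{\textbf{S}_t=s\}$, the inner sum collapses by the law of total probability to $\mathbb{P}^{\pi}(\textbf{S}_t=s\mid\textbf{S}_1=j)$, giving
\[
\sum_{a \in \mathbb{A}_s} x_{\pi,t}(s,a) = \sum_{j\in\mathbb{S}}\alpha(j)\,\mathbb{P}^{\pi}(\textbf{S}_t=s\mid\textbf{S}_1=j).
\]
Now I invoke the standing assumption that $\alpha(j)>0$ for all $j$: the right-hand side is a nonnegative linear combination with strictly positive weights, so it is positive if and only if at least one term $\mathbb{P}^{\pi}(\textbf{S}_t=s\mid\textbf{S}_1=j)$ is positive, which is exactly the definition of $(s,t)$ being reachable under $\pi$.

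Part (ii) is then immediate from (\ref{eq3}): no summation over actions is required, and the same argument based on $\alpha>0$ applied to
\[
x_{\pi,T}(s) = \sum_{j\in\mathbb{S}}\alpha(j)\,\mathbb{P}^{\pi}(\textbf{S}_T=s\mid\textbf{S}_1=j)
\]
yields the equivalence.

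There is no real obstacle; the proof is purely a bookkeeping exercise. The only point deserving emphasis is the use of the strict positivity of every $\alpha(j)$, which is what allows one to pass freely between ``the weighted average over initial states is positive'' and ``the conditional probability is positive for at least one initial state.''
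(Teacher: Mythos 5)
Your proof is correct and follows essentially the same route as the paper: both reduce $\sum_{a}x_{\pi,t}(s,a)$ (resp.\ $x_{\pi,T}(s)$) to $\sum_{j}\alpha(j)\,\mathbb{P}^{\pi}(\mathbf{S}_t=s\mid \mathbf{S}_1=j)$ by marginalizing over actions, and then use the strict positivity of every $\alpha(j)$ to equate positivity of this weighted sum with reachability. No gaps.
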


\begin{proof}
(i) When $s$ is reachable at epoch $t$, 
there exists $j \in \mathbb{S}$ satisfying $\mathbb{P}^{\pi}(\textbf{S}_t = s | \textbf{S}_1 = j) > 0$, and therefore $\alpha(j)\mathbb{P}^{\pi}(\textbf{S}_t = s | \textbf{S}_1 = j) > 0$. Thus, 
\begin{flalign*}
\sum_{a \in \mathbb{A}_s} x_{\pi, t}(s, a) &= \sum_{a \in \mathbb{A}_s} \sum_{i \in \mathbb{S}}\alpha(i)\mathbb{P}^{\pi}(\textbf{S}_t = s, \textbf{A}_t = a \,|\, \textbf{S}_1 = i)\\
&= \sum_{i \in \mathbb{S}}\alpha(i)\mathbb{P}^{\pi}(\textbf{S}_t = s \,|\, \textbf{S}_1 = i) > 0.
\end{flalign*}
On the other hand, if $s$ is not reachable, then all $\mathbb P^\pi(\textbf{S}_t=s|\textbf{S}_1=i)$, $i \in \mathbb{S}$,
vanish, and then the left hand side vanishes, too.

(ii) Again, when $s$ is reachable at $t=T$, there is a state $j$ such that $\mathbb{P}^{\pi}(\textbf{S}_T = s | \textbf{S}_1 = j) > 0$. This implies $x_{\pi, T}(s) = \sum_{i \in \mathbb{S}}\alpha(i)\mathbb{P}^{\pi}(\textbf{S}_T = s \,|\, \textbf{S}_1 = i) > 0$, and that may also be read backwards.
\end{proof}

\begin{lemma}
\label{lem2}
Let $\pi = (\pi_1, ..., \pi_{T-1}) \in \Pi$, $t = 1, ..., T-1$, $s \in \mathbb{S}$ and $a \in \mathbb{A}_s$.
Suppose $s$ is reachable at epoch $t$ under $\pi$. Then $\sum_{a' \in \mathbb{A}_s}x_{\pi, t}(s, a') > 0$ 
and
\begin{equation}
\label{q_formula}
q(a \,|\, s,\ \pi_t) = \frac{x_{\pi, t}(s, a)}{{\sum_{a' \in \mathbb{A}_s}x_{\pi, t}(s, a')}}.
\end{equation}
\end{lemma}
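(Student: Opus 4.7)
The plan is to exploit the probabilistic definition of $x_{\pi,t}$ and isolate the conditional probability $q(a\,|\,s,\pi_t)$ as a factor. The positivity claim follows immediately from Lemma \ref{lem1}(i), since reachability of $s$ at epoch $t$ is equivalent to $\sum_{a' \in \mathbb{A}_s} x_{\pi,t}(s,a') > 0$, so the denominator in (\ref{q_formula}) is nonzero and the quotient makes sense.

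For the identity itself, I would start from the defining expression
\[
x_{\pi,t}(s,a) = \sum_{j \in \mathbb{S}} \alpha(j)\, \mathbb{P}^{\pi}(\textbf{S}_t = s, \textbf{A}_t = a \,|\, \textbf{S}_1 = j),
\]
and rewrite each joint conditional probability as $\mathbb{P}^{\pi}(\textbf{A}_t = a \,|\, \textbf{S}_t = s, \textbf{S}_1 = j)\cdot \mathbb{P}^{\pi}(\textbf{S}_t = s \,|\, \textbf{S}_1 = j)$. The key observation is that $\mathbb{P}^{\pi}(\textbf{A}_t = a \,|\, \textbf{S}_t = s, \textbf{S}_1 = j) = q(a\,|\,s,\pi_t)$, independently of $j$. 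This can be seen by decomposing along all state-action histories $\tau = (j, a_1, s_2, \dots, s_{t-1}, a_{t-1}, s)$ as in the proof of Lemma \ref{unreach1}: applying the chain rule, the factor $q(a\,|\,s,\pi_t)$ comes out of the summation over all such histories, leaving precisely $\mathbb{P}^{\pi}(\textbf{S}_t = s \,|\, \textbf{S}_1 = j)$ behind.

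Substituting back yields
\[
x_{\pi,t}(s,a) = q(a\,|\,s,\pi_t)\sum_{j \in \mathbb{S}} \alpha(j)\, \mathbb{P}^{\pi}(\textbf{S}_t = s \,|\, \textbf{S}_1 = j).
\]
Summing over $a' \in \mathbb{A}_s$ and using $\sum_{a'\in \mathbb{A}_s} q(a'\,|\,s,\pi_t) = 1$ gives
\[
\sum_{a' \in \mathbb{A}_s} x_{\pi,t}(s,a') = \sum_{j \in \mathbb{S}} \alpha(j)\, \mathbb{P}^{\pi}(\textbf{S}_t = s \,|\, \textbf{S}_1 = j),
\]
which is the common factor appearing above. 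Dividing establishes (\ref{q_formula}).

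The only delicate step is the extraction of $q(a\,|\,s,\pi_t)$ as a factor independent of $j$; everything else is algebraic. I would address it by expanding $\mathbb{P}^{\pi}(\textbf{S}_t = s, \textbf{A}_t = a \,|\, \textbf{S}_1 = j)$ via the chain rule over all admissible partial trajectories reaching $(s,t)$, noting that the last conditional factor is exactly $q(a\,|\,s,\pi_t)$ in every term by the defining equations of $\mathbb{P}^{\pi}$, and therefore may be pulled outside the summation. Reachability of $s$ at epoch $t$ guarantees that the residual probability mass is positive, so no division-by-zero issue arises.
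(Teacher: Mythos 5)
Your proposal is correct and follows essentially the same route as the paper: positivity via Lemma \ref{lem1}(i), then factoring $\mathbb{P}^{\pi}(\textbf{S}_t = s, \textbf{A}_t = a \,|\, \textbf{S}_1 = j) = q(a\,|\,s,\pi_t)\,\mathbb{P}^{\pi}(\textbf{S}_t = s \,|\, \textbf{S}_1 = j)$, pulling $q$ out of the sum over $j$, and identifying the remaining factor with $\sum_{a'} x_{\pi,t}(s,a')$. Your explicit chain-rule justification that the factor $q(a\,|\,s,\pi_t)$ is independent of $j$ is a detail the paper leaves implicit, but it is the same argument.
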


\begin{proof}
Positivity $\sum_{a' \in \mathbb{A}_s}x_{\pi, t}(s, a') > 0$ follows directly
from Lemma \ref{lem1}(i), so that the
right hand side of (\ref{q_formula}) is well-defined for reachable states $s$. 
We have, for each $t = 1, ..., T-1$, $s \in \mathbb{S}$ and $a \in \mathbb{A}_s$,
\begin{flalign*}
x_{\pi, t}(s, a) &= \sum_{j \in \mathbb{S}}\alpha(j)\,\mathbb{P}^{\pi}(\textbf{S}_t = s, \textbf{A}_t = a \,|\, \textbf{S}_1 = j)\\
&= \sum_{j \in \mathbb{S}}\alpha(j)\,q(a \,|\, s,\ \pi_t)\mathbb{P}^{\pi}(\textbf{S}_t = s \,|\, \textbf{S}_1 = j)\\
&= q(a \,|\, s,\ \pi_t)\sum_{a' \in \mathbb{A}_s}\sum_{j \in \mathbb{S}}\alpha(j)\,\mathbb{P}^{\pi}(\textbf{S}_t = s, \textbf{A}_t = a' \,|\, \textbf{S}_1 = j)\\
&= q(a \,|\, s,\ \pi_t)\sum_{a' \in \mathbb{A}_s} x_{\pi, t}(s, a'),
\end{flalign*}
hence the asserted equality.
\end{proof}

This has the following consequence:

\begin{lemma}
Two regular policies $\pi,\pi'$ coincide iff their state-action frequencies agree.
\end{lemma}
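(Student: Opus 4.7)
The plan is to establish the nontrivial direction, namely that $x_\pi = x_{\pi'}$ forces $\pi = \pi'$ when both policies are regular; the forward implication is immediate from the definitions in (\ref{eq2}) and (\ref{eq3}).

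First I would fix an epoch $t \in \{1, \dots, T-1\}$ and a state $s \in \mathbb{S}$, and split into two cases according to whether $s$ is reachable at epoch $t$ under $\pi$. By Lemma \ref{lem1}(i), reachability under $\pi$ at $(s,t)$ is equivalent to $\sum_{a' \in \mathbb{A}_s} x_{\pi, t}(s, a') > 0$, and the same statement holds for $\pi'$. Hence the hypothesis $x_\pi = x_{\pi'}$ implies that $(s,t)$ is reachable under $\pi$ if and only if it is reachable under $\pi'$, so the case split is consistent.

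In the reachable case, I apply Lemma \ref{lem2} to both $\pi$ and $\pi'$: the formula (\ref{q_formula}) gives
\begin{equation*}
q(a \,|\, s,\, \pi_t) = \frac{x_{\pi, t}(s, a)}{\sum_{a' \in \mathbb{A}_s} x_{\pi, t}(s, a')} = \frac{x_{\pi', t}(s, a)}{\sum_{a' \in \mathbb{A}_s} x_{\pi', t}(s, a')} = q(a \,|\, s,\, \pi_t'),
\end{equation*}
for every $a \in \mathbb{A}_s$, so the decision rules of $\pi_t$ and $\pi_t'$ coincide at $s$. In the unreachable case, regularity of both policies forces $q(1 \,|\, s,\, \pi_t) = 1 = q(1 \,|\, s,\, \pi_t')$, so they agree on $s$ as well. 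Running this over all $s$ and $t$, I conclude $\pi_t = \pi_t'$ for each $t < T$ and hence $\pi = \pi'$.

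The only place where some care is needed is the unreachable case: without the regularity hypothesis, the $q(\cdot \,|\, s, \pi_t)$ at unreachable $(s,t)$ are not pinned down by $x_\pi$, and the result would fail. This is precisely what the regularization convention was introduced for, and is the conceptual heart of the argument; the rest is a direct application of the two preceding lemmas.
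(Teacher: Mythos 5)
Your proof is correct and follows essentially the same route as the paper: use Lemma \ref{lem1} to see that $\pi$ and $\pi'$ share the same reachable state-epoch pairs, apply formula (\ref{q_formula}) from Lemma \ref{lem2} at reachable pairs, and invoke regularity to pin down the decision rules at unreachable pairs. Nothing to add.
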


\begin{proof}
    Suppose $x_{\pi,t}(s,a) = x_{\pi',t}(s,a)$ for all $t,s,a$. We have to show that 
    $q(a|s,\pi_t) = q(a|s,\pi'_t)$ for all $t,s,a$. Now clearly $\pi,\pi'$ have the same reachable states
    at epoch $t$,
    because by Lemma \ref{lem1} those are the ones where the denominator in (\ref{q_formula}) does not vanish. In that case (\ref{q_formula}) is applicable to both $\pi$ and $\pi'$ and 
    therefore gives equality. Next consider a state $s$ not reachable at epoch $t$ for
    $\pi,\pi'$. Then, as both are regular policies, we have
    $q(1|s,\pi_t)=1$ and also $q(1|s,\pi_t')=1$, hence again equality, because
    $q(a|s,\pi_t) = 0 =  q(a|s,\pi_t')$ for all $a\not=1$.
\end{proof}

This fact could also be re-stated as follows: two policies are equivalent, $\pi \sim \pi'$,
iff their state-action frequency vectors agree, respectively, iff the right hand side of (\ref{q_formula}) is the same for both, cases 0/0 included.


In a second more involved step, we will now have to show that
$\pi \mapsto x_\pi$ is surjective. This means that
every state-action frequency vector  $x \in P$ is  generated 
as $x=x_\pi$ by a regular policy $\pi\in \Pi_r$. 
This is obtained by the following proposition.

\begin{proposition}
\label{prop2}
Given $x \in  P$,  there exists a unique regular policy $\pi^x \in \Pi_r$ such that $x = x_{\pi^x}$. In particular, $\pi \mapsto x_\pi$ is surjective 
onto $P$.
\end{proposition}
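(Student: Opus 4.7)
The plan is to build the policy $\pi^x$ from $x$ using the formula suggested by Lemma \ref{lem2}, and then verify by induction that the resulting state-action frequencies match $x$ exactly. Uniqueness will then be a one-line appeal to the preceding lemma.

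First, I construct $\pi^x$. For each $t = 1,\dots,T-1$ and $s\in\mathbb{S}$, set
\begin{equation*}
q(a\,|\,s,\pi^x_t) = \frac{x_t(s,a)}{\sum_{a'\in\mathbb{A}_s}x_t(s,a')}
\quad\text{if }\sum_{a'\in\mathbb{A}_s}x_t(s,a')>0,
\end{equation*}
and otherwise put $q(1\,|\,s,\pi^x_t)=1$. The non-negativity of $x$ makes the quotients into legitimate probability distributions summing to $1$ over $\mathbb{A}_s$, so $\pi^x$ is a well-defined policy; the second clause ensures $\pi^x\in\Pi_r$ once we connect unreachability at $(s,t)$ under $\pi^x$ with the vanishing of $\sum_{a'}x_t(s,a')$.

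The core step is showing $x_{\pi^x}=x$ by induction on $t$. For the base case $t=1$, equation (\ref{eq2}) collapses to $x_{\pi^x,1}(s,a)=\alpha(s)\,q(a\,|\,s,\pi^x_1)$, and condition (a) in Proposition \ref{prop1} gives $\sum_{a'}x_1(s,a')=\alpha(s)>0$ (recall $\alpha$ is strictly positive), so plugging in the definition of $q$ yields $x_{\pi^x,1}(s,a)=x_1(s,a)$. For the inductive step, I compute
\begin{equation*}
x_{\pi^x,t+1}(j,a)=q(a\,|\,j,\pi^x_{t+1})\sum_{s\in\mathbb{S}}\sum_{a'\in\mathbb{A}_s}p_t(j\,|\,s,a')\,x_{\pi^x,t}(s,a'),
\end{equation*}
and then apply the induction hypothesis together with condition (b) of Proposition \ref{prop1} to replace the double sum by $\sum_{a''\in\mathbb{A}_j}x_{t+1}(j,a'')$. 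If this latter sum is positive, the definition of $q(a\,|\,j,\pi^x_{t+1})$ returns exactly $x_{t+1}(j,a)$; if it is zero, then $x_{t+1}(j,a)=0$ as well and both sides vanish. The terminal epoch $t=T$ is handled similarly using condition (c).

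Next, regularity: if $(s,t)$ is unreachable under $\pi^x$, then by Lemma \ref{lem1}(i) we have $\sum_{a'}x_{\pi^x,t}(s,a')=0$; having just shown $x_{\pi^x}=x$, this forces $\sum_{a'}x_t(s,a')=0$, so by construction $q(1\,|\,s,\pi^x_t)=1$. Finally, uniqueness is immediate: if $\pi',\pi''\in\Pi_r$ both satisfy $x_{\pi'}=x_{\pi''}=x$, the lemma preceding Proposition \ref{prop2} forces $\pi'=\pi''$.

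The only non-routine point is managing the degenerate case $\sum_{a'}x_t(s,a')=0$ throughout the induction: one has to check both that $\pi^x$ is well-defined there (handled by the regularization convention) and that the induction step still produces the correct zero value on both sides. Everything else is an unrolling of the definitions and the three balance equations.
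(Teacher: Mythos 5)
Your proposal is correct and follows essentially the same route as the paper: define $\pi^x$ via the return formula (with the $q(1\,|\,s,\pi^x_t)=1$ convention at states where $\sum_{a'}x_t(s,a')=0$), then verify $x_{\pi^x}=x$ by induction on $t$ using conditions (a)--(c), with uniqueness delegated to the preceding lemma. The only cosmetic difference is that the paper builds the policy incrementally epoch by epoch while you define it globally and then induct on the verification; the key identity (converting the double sum via condition (b) and the induction hypothesis) and the handling of the degenerate zero-sum case are identical.
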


\begin{proof}
Uniqueness is clear from Lemma \ref{lem2}. Existence is
proved by induction.

We say that a state $s$ is \textit{attainable} by the state-action frequency vector $x$
at epoch $t$ if $\sum_{a\in \mathbb{A}_s} x_t(s,a) > 0$, otherwise $s$ is unattainable by $x$ at epoch $t$. We prove
by induction on $t$ that for every $t$ there exists a regular policy $\pi^t$ such that $x_{t'}(s,a) = x_{\pi^t,t'}(s,a)$ for all $t'=1,\dots,  t$, all $s\in \mathbb{S},a\in \mathbb{A}_s$, such that in addition (\ref{q_formula}) holds for all  $s$ attainable at $t'=1,\dots,t$.

To start the induction, note that by condition (a) we have
$\sum_{a\in \mathbb{A}_j} x_1(j,a) = \alpha(j) > 0$  for every $j$, hence every $j$ is attainable
at $t=1$ and we can put
$$
q(a|s,\pi_1^1) = \frac{x_1(s,a)}{\sum_{a'\in \mathbb{A}_s} x_1(s,a')},
$$
which fixes $\pi_1^1$.
Defining $\pi_2^1,\dots,\pi_{T-1}^1$ arbitrarily gives a policy $\pi^1$ which may without loss 
be assumed regular, because if it is not regular at some $(s,t)$ with $t >1$, we can apply the regularization procedure described earlier.
We check that
$x_{\pi^1,1}(s,a) = x_1(s,a)$ for all $s$ and $a$. Now
\begin{flalign*}x_{\pi^1, 1}(s, a) &= \sum_{j \in \mathbb{S}}\alpha(j)\,\mathbb{P}^{\pi^1}(\textbf{S}_1 = s, \textbf{A}_1 = a \,|\, \textbf{S}_1 = j) \\
&= \sum_{j \in \mathbb{S}}\alpha(j)\,q(a \,|\, s,\ \pi_1^1)\mathbb{P}^{\pi^1}(\textbf{S}_1 = s \,|\, \textbf{S}_1 = j)
\\
&= \frac{x_1(s, a)}{\sum_{a' \in \mathbb{A}_s}x_1(s, a')}\sum_{j \in \mathbb{S}}\alpha(j)\,\mathbb{P}^{\pi^1}(\textbf{S}_1 = s \,|\, \textbf{S}_1 = j) \\
&= \frac{x_1(s, a)}{\sum_{a' \in \mathbb{A}_s}x_1(s, a')}\sum_{a' \in \mathbb{A}_s}x_{\pi^1, 1}(s, a') \\
&= x_1(s, a),
\end{flalign*}
for all $s$, where the final equality is due to the fact that $\sum_{a \in \mathbb{A}_s}x_{1}(s, a) = \sum_{a \in \mathbb{A}_s}x_{\pi^1, 1}(s, a)$. Our claim therefore holds for $t = 1$.

For the induction step, let $t=2,\dots,T-1$, and suppose there exists a regular policy $\pi^{t-1}$ 
which realizes $x$ up to epoch $t-1$, that is, 
$x_{t'}(s,a) = x_{\pi^{t-1},t'}(s,a)$ for all epochs $t'=1,\dots, t-1$ and all $s,a$, such that formula (\ref{q_formula}) is satisfied
for states $s$ reachable by $\pi^{t-1}$ at some epoch $t'=1,\dots, t-1$, while
$q(1|s,\pi^{t-1}_{t'}) = 1$ for unreachable $s$ at $t'$. 


Now we have to define a
new regular policy $\pi^t$ which realizes $x$ up to epoch $t$. We let $\pi^t_{t'} = \pi^{t-1}_{t'}$ 
for epochs $t'=1,\dots, t-1$, and define
$$
q(a|s,\pi^t_t) = \frac{x_t(s,a)}{\sum_{a'\in \mathbb{A}_s} x_t(s,a')}
$$
for those states $s$ attainable by $x$ at epoch $t$. For
states $s$ unattainable at $t$ we define
$q(1|s,\pi_t^t)=1$. We prolong $\pi_{t+1}^t,\dots,\pi_{T-1}^t$ arbitrarily so that $\pi^t$
is regular. We now have to show that $x_{t'} = x_{\pi^t,t'}$ for $t'=1,\dots, t$, and that (\ref{q_formula}) is satisfied up to epoch $t$
at reachable states $s$. Since $\pi^t$ agrees with $\pi^{t-1}$ up to $t-1$, it remains to
check $x_t = x_{\pi^t,t}$. 
We have the sum equality
\begin{align*}
    \sum_{a'\in \mathbb{A}_s} x_t(s,a') &= \sum_{j\in \mathbb{S}} \sum_{a'\in \mathbb{A}_j} p_{t-1} (s|j,a') x_{t-1}(j,a') 
    \qquad\qquad \mbox{ (by (b))}\\
    &= \sum_{j\in \mathbb{S}} \sum_{a'\in \mathbb{A}_j} p_{t-1} (s|j,a') x_{\pi^{t-1},t-1}(j,a')  \qquad \mbox{ (by induction hyp.)}\\
    &=\sum_{j\in \mathbb{S}} \sum_{a'\in \mathbb{A}_j} p_{t-1} (s|j,a') x_{\pi^t,t-1}(j,a') \qquad \;\,\,\,\mbox{ (by definition of $\pi^t$})\\
    &= \sum_{a'\in \mathbb{A}_s} x_{\pi^t,t}(s,a').
    \end{align*}

    Now if $s$ is not attainable by $x$ at $t$, the left hand side equals zero, which implies that the
    right hand side also equals zero, so that the
    $x_{\pi^t,t}(s,a')$ are all zero. Hence the claimed equality is satisfied in that case.

    Consider the case where $s$ is attainable by $x$ at epoch $t$. Then 
    \begin{align*}
    x_{\pi^t,t}(s,a) &= \sum_{j \in \mathbb{S}} \alpha(j) \mathbb P^{\pi^t}(\textbf{S}_t=s,\textbf{A}_t=a|\textbf{S}_1=j)\\
    &= \sum_{j\in \mathbb{S}} \alpha(j) q(a|s,\pi^t_t) \mathbb P^{\pi^t}(\textbf{S}_t=s|\textbf{S}_1=j)\\
    &= \frac{x_t(s,a)}{\sum_{a'\in \mathbb{A}_s} x_t(s,a')} \sum_{j\in \mathbb{S}} \alpha(j) \mathbb P^{\pi^t}(\textbf{S}_t=s|\textbf{S}_1=j)\\
    &=  \frac{x_t(s,a)}{\sum_{a'\in \mathbb{A}_s} x_t(s,a')} \sum_{a'\in \mathbb{A}_s} x_{\pi^t,t}(s,a')\\
    &= x_t(s,a)
    \end{align*}
    where the last line uses the sum equality above. Therefore $\pi^t$ is as claimed, because
    (\ref{q_formula}) is satisfied by construction. 

    Having completed the induction,
    $\pi^{T-1}$ is a regular policy which realizes $x$ up to epoch $t=T-1$.  But since constraint (c) is explicit, $\pi^{T-1}$
    gives the desired equality also at the terminal epoch $T$, so $\pi^{T-1}$ realizes $x$ up to $T$, and
    is thus the desired
    $\pi^x$ with $x = x_{\pi^x}$.
\end{proof}

This
establishes the desired bijection $\pi \mapsto x_\pi$ between
$\Pi_r$ and $P$.  Moreover, the return formula
\begin{equation}
\label{pi_x_equation}
q(a \,|\, s,\ \pi^{x}_t) = \frac{x_t(s, a)}{\sum_{a' \in \mathbb{A}_s}x_t(s, a')}
\end{equation}
gives the explicit inverse $x\mapsto \pi^x$ to
$\pi \mapsto x_\pi$. The latter is summarized by the shorthand
$$
x_{\pi^x} = x \;\mbox{ and }\; \pi^{x_\pi} = \pi.
$$

This inversion formula may also be applied to recover non-regular
policies from state-action frequency vectors in the following way. 
When $s$ is not attainable by $x$
at epoch $t$, or equivalently, not reachable by $\pi^x$ at $t$, then the right hand side of the formula reads $0/0$ for all actions $a \in \mathbb{A}_s$. We interpret this to mean that the probability distribution $\{q(a | s, \pi^{x}_t)\}_{a \in \mathbb{A}_s}$ can be defined arbitrarily.
Regular policies $\pi^x$ are those where this distribution is concentrated on $a=1$. 
All other choices of distribution produce policies equivalent to $\pi^x$.
Yet another way to express this is to say that, given $x$, the return formula fixes
$\pi^x$ up to equivalence $\sim$.

The bijectivity of $\pi \mapsto x_\pi$ from $\Pi_r$ onto $P$ having been established,
we may now propose the sought vector
linear program
\begin{equation}
\tag{\mbox{${\rm vLP}$}}
\begin{aligned}
\label{LP}
\operatorname{V-max} \quad & \sum_{t = 1}^{T-1}\sum_{s \in \mathbb{S}}\sum_{a \in \mathbb{A}_s} x_{t}(s, a)R_t(s, a) + \sum_{s \in \mathbb{S}}x_{T}(s)R_T(s), & \textrm{s.t.} \quad & x \in P,\\
\end{aligned}
\end{equation} 
which is equivalent to the vector maximization program (\ref{program}), because policies $\pi\in \Pi_r$ are in one-to-one correspondence
with feasible solutions
$x=x_\pi \in P$, and the objective
$\sum_{s\in \mathbb{S}} \alpha(s) v^\pi(s)$
of (\ref{program}) is carried into the objective
of (\ref{LP}) under (\ref{eq1})-(\ref{eq3}).
These facts are summarized in the following main theorem.
\begin{theorem}
\label{centralthm}
Let $V'$ denote the value set of program {\rm (\ref{LP})}, i.e., \begin{equation*}V' = \biggl\{\sum_{t = 1}^{T-1}\sum_{s \in \mathbb{S}}\sum_{a \in \mathbb{A}_s} x_{t}(s, a)R_t(s, a) + \sum_{s \in \mathbb{S}}x_{T}(s)R_T(s):\ x \in P\biggr\}.\end{equation*} Then $V' = V$, where $V$ is the value set of {\rm (vMDP)}. Furthermore, a policy $\pi\in \Pi$ is efficient for {\rm (vMDP)} if and only if $x_{\pi}\in P$ is an efficient solution of {\rm (\ref{vLP})}. 
\end{theorem}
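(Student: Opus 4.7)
The theorem is essentially a compilation of the tools built up in this section, so my plan is to present it as a short corollary in two steps rather than introduce any new machinery. The two ingredients needed are: equation (\ref{eq1}), which shows that for every $\pi \in \Pi$ the vMDP objective $\sum_{s\in\mathbb{S}}\alpha(s)v^\pi(s)$ equals the vLP objective evaluated at the state-action frequency vector $x_\pi$; and Proposition \ref{prop2}, which gives a bijection between the regular policies $\Pi_r$ and the feasible set $P$.

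To establish $V = V'$, the inclusion $V \subseteq V'$ follows by taking any $\pi \in \Pi$: Proposition \ref{prop1} places $x_\pi$ in $P$, and (\ref{eq1}) identifies the vMDP objective at $\pi$ with the vLP objective at $x_\pi$, so the common value belongs to $V'$. Conversely, given $x \in P$, Proposition \ref{prop2} supplies a regular policy $\pi^x \in \Pi_r$ with $x_{\pi^x} = x$, and another application of (\ref{eq1}) shows the vLP objective at $x$ lies in $V$.

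For the efficiency equivalence, fix $\pi \in \Pi$ and set $z_\pi := \sum_{s\in\mathbb{S}}\alpha(s)v^\pi(s)$, which by (\ref{eq1}) also equals the vLP objective at $x_\pi$. By definition, $\pi$ is efficient for (vMDP) iff $z_\pi$ is maximal in $V$, and $x_\pi$ is an efficient solution of (\ref{vLP}) iff the very same vector $z_\pi$ is maximal in $V'$. Because $V = V'$ has just been proved, the two maximality conditions describe the same element of the same set, so the equivalence is immediate.

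The only point requiring any care is that Proposition \ref{prop2} gives a bijection with regular policies, whereas the theorem speaks of arbitrary $\pi \in \Pi$. I would handle this at the outset by invoking the $\sim$-invariance noted after the definition of regularity: both $v^\pi$ and $x_\pi$ depend only on the equivalence class of $\pi$, so replacing $\pi$ by its regularization changes neither side of either assertion. Beyond this piece of bookkeeping I do not expect any genuine obstacle, since the heavy lifting has already been done in Propositions \ref{prop1} and \ref{prop2}.
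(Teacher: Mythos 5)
Your proposal is correct and follows essentially the same route as the paper: both prove $V=V'$ via Equation (\ref{eq1}) together with Propositions \ref{prop1} and \ref{prop2}, and then read off the efficiency equivalence from the fact that $\pi$ and $x_\pi$ share the same objective value in the now-identical value sets. You actually spell out the efficiency step and the regular-versus-arbitrary-policy bookkeeping more explicitly than the paper does, which is a harmless improvement.
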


\begin{proof}
    Observe that $P$ is nonempty as a result of Proposition \ref{prop1} and of $\Pi$ being nonempty. Let $v' = \sum_{t = 1}^{T-1}\sum_{s \in \mathbb{S}}\sum_{a \in \mathbb{A}_s} x_{t}(s, a)R_t(s, a) + \sum_{s \in \mathbb{S}}x_{T}(s)R_T(s)$ for some $x \in P$. By Proposition \ref{prop2}, there exists a policy $\pi$ such that $x = x_{\pi^{x}}$. Therefore, $v' = \sum_{s \in \mathbb{S}}\alpha(s)v^{\pi^{x}}(s)$ by Equation (\ref{eq1}), and so $v' \in V$. As a result, $V' \subseteq V$. Seeing that the converse inclusion is a simple consequence of Equation (\ref{eq1}) and Proposition (\ref{prop1}), the theorem is proved.
\end{proof}

In the special case where the model is a real-valued MDP ($k = 1$), Theorem \ref{centralthm} effectively states that a policy $\pi$ is optimal --- which is to say, $\sum_{s \in \mathbb{S}}\alpha(s)v(\pi) = \max_{\pi' \in \Pi}\sum_{s \in \mathbb{S}}\alpha(s)v(\pi')$ --- if and only if $x_{\pi}$ is an optimal solution of the ordinary linear program \begin{equation*}\max \biggl\{\sum_{t = 1}^{T-1}\sum_{s \in \mathbb{S}}\sum_{a \in \mathbb{A}_s} x_{t}(s, a)R_t(s, a) + \sum_{s \in \mathbb{S}}x_{T}(s)R_T(s): x \in P \biggr\}.\end{equation*} 

We end this section by observing that, naturally, by reshaping $x$ into an $n$-dimensional column vector,
program
(\ref{LP}) takes 
the canonical form of Section \ref{sect_problem}, namely
\begin{equation*}
\begin{aligned}
\operatorname{V-max} \quad & Cx, & \textrm{s.t.} \quad & x \in P,\\
\end{aligned}
\end{equation*} 
where 
$C \in \mathbb{R}^{k \times n}$ serves to rewrite
the objective (\ref{eq1}) as $Cx$, while $P=\{x\in \mathbb R^n: Ax=b, x\geqq 0\}$ represents
the equality constraints (\ref{eq2}), (\ref{eq3}).  
This involves  $m = S\cdot T$ equality constraints
for $n = (T-1)K + S$ decision variables, where $K=\sum_{s\in \mathbb{S}} k_s$. 
The constraint set
$P$ 
is thereby revealed
as a polyhedron,   whose structure is much more convenient than the
complicated structure of the policy set $\Pi$.
While the explicit definition of $A$, $b$ and $C$ is given in the Appendix, the dimensions and rank of $A$ will be of importance, so we state the following

\begin{lemma}
\label{A_rank}
The constraint matrix $A$ obtained through the vectorization of equations {\rm (a)-(c)} in Proposition
{\rm \ref{prop1}} is of size $m \times n$
with $m < n$ and has full rank, i.e., $\rank(A) = m$.
\end{lemma}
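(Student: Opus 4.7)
The plan is to first verify the size inequality by a direct count, then exhibit a block lower bidiagonal form for $A$ and peel off the row blocks one at a time using back-substitution.

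For the dimensions, condition (a) contributes $S$ rows, each of the conditions (b) for $t=1,\ldots,T-2$ contributes $S$ rows, and condition (c) contributes $S$ rows, so $m = S + (T-2)S + S = ST$, while the variable count is $n = (T-1)K + S$. Hence $n - m = (T-1)(K - S)$, and since $T \geq 2$ and the standing hypothesis $k_s > 1$ for at least one $s$ gives $K = \sum_{s}k_s > S$, we conclude $m < n$.

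For the rank, I would order the variables blockwise by epoch ($x_1,\ldots,x_{T-1},x_T$) and the rows as row block $1$ (from (a)), row blocks $2,\ldots,T-1$ (from (b) with $t=1,\ldots,T-2$), and row block $T$ (from (c)). With this ordering $A$ takes the block lower bidiagonal form
\[
A = \begin{pmatrix}
B       &        &        &        &        &     \\
-P_1    & B      &        &        &        &     \\
        & -P_2   & B      &        &        &     \\
        &        & \ddots & \ddots &        &     \\
        &        &        & -P_{T-2} & B     &     \\
        &        &        &        & -P_{T-1} & I_S
\end{pmatrix},
\]
where $B$ is the $S\times K$ matrix whose $j$-th row is the indicator of the columns indexed by $(j,a)$ with $a\in\mathbb{A}_j$, and $P_t$ is the $S\times K$ matrix with entry $p_t(j|s,a)$ in position $(j,(s,a))$. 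Because distinct rows of $B$ have disjoint supports, $B$ has full row rank $S$, and $I_S$ trivially has rank $S$.

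To finish, suppose $y^T A = 0$ with $y = (y^{(1)},\ldots,y^{(T)})$ partitioned conformally into blocks of size $S$. The last column block gives $(y^{(T)})^T I_S = 0$, so $y^{(T)} = 0$. For $t = T-1, T-2, \ldots, 1$ in turn, reading column block $t$ gives $(y^{(t)})^T B - (y^{(t+1)})^T P_t = 0$; once $y^{(t+1)} = 0$ is known, this collapses to $(y^{(t)})^T B = 0$, and the full row rank of $B$ forces $y^{(t)} = 0$. By induction $y = 0$, so $\rank(A) = m$. No real obstacle arises here: the only delicate point is bookkeeping the block structure carefully, after which the back-substitution is mechanical.
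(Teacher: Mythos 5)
Your proof is correct. The dimension count $n-m=(T-1)(K-S)>0$ is exactly the paper's, and your rank argument is sound: the block lower bidiagonal form you display is precisely the matrix given in the paper's appendix (where your $B$ is called $\Sigma$), the disjoint-support observation does give $\rank B = S$, and the backward substitution on $y^{\ttop}A=0$ from block $T$ down to block $1$ is watertight. The only difference from the paper is the direction of attack: the paper proves full row rank by showing the map $x\mapsto Ax$ is \emph{surjective}, forward-solving the system $(a)$--$(c)$ with arbitrary right-hand sides $(\alpha,\beta,\gamma)$ for $x_1$, then $x_2$, and so on, exploiting at each stage that $\sum_{a}x_t(j,a)=\cdot$ is under-determined; you instead show the \emph{left kernel} is trivial by back-substitution. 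These are dual formulations of the same fact, both resting on the block-triangular structure and the full row rank of the summation block. The paper's route has the minor advantage of not needing the explicit matrix form of $A$ (it remarks on this after the proof), and it doubles as a constructive solvability statement reused later; your route has the advantage of being a short, purely mechanical linear-algebra verification that simultaneously derives the sparsity pattern recorded in the appendix. Either is a complete proof.
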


\begin{proof}
If we had $m \geqq n$, it would follow that $S \geqq \sum_{s \in \mathbb{S}}k_s =: K$, a conclusion incompatible with the fact that $k_s > 1$ for at least one $s \in \mathbb{S}$. Thus, $m < n$.

Our claim concerning the rank will follow when we show that the operator defined through (\ref{eq2}), (\ref{eq3})
is surjective. That is to say, the system
\begin{itemize}
\item[(a)] $\displaystyle\sum_{a\in \mathbb{A}_j} x_1(j,a) = \alpha(j)$ for all $j\in \mathbb{S}$,
\item[(b)] $\displaystyle\sum_{a\in \mathbb{A}_j} x_{t+1}(j,a) = \sum_{s\in \mathbb{S}}\sum_{a\in \mathbb{A}_s}
p_t(j|s,a) x_t(s,a) + \beta(t,j)$ for all $j\in \mathbb{S}$ and $t=1,\dots,T-2$,
\item[(c)] $x_T(j) = \displaystyle\sum_{s\in \mathbb{S}} \sum_{a\in \mathbb{A}_s} p_{T-1}(j|s,a) x_{T-1}(s,a) + \gamma(j)$ for all $j\in \mathbb{S}$
\end{itemize}
must have a solution for arbitrary $(\alpha,\beta,\gamma)\in
\mathbb R^S\times \mathbb R^{(T-2)\times S} \times \mathbb R^S$.
Now clearly (a) is under-determined, as there are $S$ constraints 
and $k_j$ unknowns for every $j\in \mathbb{S}$, giving a total of $K>S$ unknowns. 
So, we can fix $x_1$
to satisfy (a). Then, using (b) first from $t=1$ to $t=2$,
the right hand side being now fixed to a value $\widetilde{\beta}(1,j)$ 
for every $j\in \mathbb{S}$ due to
$x_1$ being fixed, we have yet another under-determined  
system of the same structure
$$
\sum_{a\in \mathbb{A}_j} x_2(j,a) = \widetilde{\beta}(1,j), \quad j\in \mathbb{S},
$$
which we can again solve for $x_2$. Proceeding in this way from $x_t$ to $x_{t+1}$ using (b),
we arrive at fixing $x_{T-1}$. In the final step, since
(c) is explicit, we just have to read off the value $x_T$ from the values of the right hand side of (c), now fixed through the previous steps.
\end{proof}

\noindent The proof does not require the explicit form of $A$, which under the
vectorization of $x$ given in the Appendix shows an interesting sparsity pattern. 
\\

\section{Consequences of the main theorem}
\label{sect_consequences}


The analysis leading to Theorem \ref{centralthm} has four major implications. 
First, the efficient value set of program (\ref{program}) is identical to that of program (\ref{LP}). Second, there is a one-to-one correspondence between regular efficient policies $\pi \in \Pi^E_r$
and efficient solutions $x\in P_E$ of (\ref{LP}). 
Third, we can use Equation (\ref{pi_x_equation}) to generate an efficient policy from any efficient solution of (\ref{LP}). 
Fourth, the equivalence between the two programs allows the use of vector linear optimization techniques to solve (\ref{program}) via (\ref{LP}). We give numerical illustrations of one of these techniques in Section \ref{sect_numeric}.



As a first important consequence of the main theorem, we now
establish that (\ref{program}) admits efficient policies. 
Using the one-to-one correspondence between $P$ and $\Pi_r$, respectively, the
equivalence between (\ref{program}) and (\ref{vLP}), this boils down to finding an efficient solution 
for (\ref{LP}). This can now be derived from the following
general

\begin{lemma}
\label{existence}
    Suppose the value set of a vector linear program \begin{equation*}
\begin{aligned}
\operatorname{V-max} \quad & Cx, & \textrm{s.t.} \quad Ax = b,\ x \geqq 0\\
\end{aligned}
\end{equation*}is nonempty and bounded above. Then the program admits efficient solutions.
\end{lemma}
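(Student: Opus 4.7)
The plan is to reduce the vector program to a single scalar linear program by positively weighting the criteria. Specifically, fix any strictly positive weight vector $\lambda = (\lambda_1,\dots,\lambda_k)^T > 0$, for instance $\lambda = (1,\dots,1)^T$, and consider the scalar LP
\begin{equation*}
\max \lambda^T C x, \quad \text{s.t. } Ax = b,\ x \geqq 0.
\end{equation*}
I would then show that every optimal solution of this scalar LP is an efficient solution of the original vLP.

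First I would check that the scalar LP attains its maximum. Since the value set $C(P)$ is nonempty, $P$ itself is nonempty, so the scalar LP is feasible. By hypothesis there exists $u \in \mathbb{R}^k$ with $Cx \leqq u$ for all $x \in P$, so $\lambda^T Cx \leqq \lambda^T u$ for all $x \in P$; hence the scalar objective is bounded above on $P$. By the fundamental existence theorem of linear programming, a feasible linear program with a bounded-above objective over $\{x : Ax = b,\, x \geqq 0\}$ attains its maximum at some $x^* \in P$.

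Second I would argue that $x^*$ is efficient for the vector program. Suppose to the contrary that $x^*\notin P_E$, i.e., there exists $x' \in P$ with $Cx' \geqq Cx^*$ and $Cx' \neq Cx^*$. Then at least one component of $Cx' - Cx^*$ is strictly positive while all others are nonnegative, and since $\lambda_i > 0$ for every $i$, we obtain
\begin{equation*}
\lambda^T Cx' > \lambda^T Cx^*,
\end{equation*}
contradicting the optimality of $x^*$ in the scalar LP. Therefore $Cx^* \in \mathscr{E}(C(P))$, so $x^*$ is efficient.

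The argument is standard and uses no vMDP-specific structure beyond what is already established; the only delicate point is the interpretation of \emph{bounded above}, which here must mean componentwise boundedness in the product order $\geqq$ (i.e., the existence of a uniform upper bound $u \in \mathbb{R}^k$), so that the linear functional $\lambda^T C \cdot$ inherits boundedness from the weighting with $\lambda > 0$. Once this is granted, the existence of efficient solutions follows immediately from the attainment theorem for scalar LPs, which is the only nontrivial ingredient.
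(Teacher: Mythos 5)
Your proof is correct, but it takes a different route from the paper's. The paper argues directly on the value set: $C(P)$ is the image of a polyhedron under a linear map, hence itself a polyhedron and in particular closed; being also bounded above, it admits a maximal element $\bar v$ with respect to $\geqq$, and any preimage $\bar x$ of $\bar v$ in $P$ is efficient. You instead scalarize: you fix $\lambda > 0$, invoke the attainment theorem for the scalar LP $\max\{\lambda^T C x : x \in P\}$, and verify that any maximizer is efficient because a dominating point $x'$ would give $\lambda^T C x' > \lambda^T C x^*$. Both arguments are sound. The paper's version is shorter but leaves implicit the (true, but not entirely trivial) fact that a nonempty closed subset of $\mathbb{R}^k$ bounded above in the product order has a maximal element; your scalarization is one standard way to supply exactly that missing detail, so in a sense you have filled in the step the paper glosses over. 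Your route also buys something extra: the efficient point you produce is optimal for a positively weighted scalar LP, which is precisely the mechanism the paper uses later (Corollary \ref{cor3} and Proposition \ref{prop5}, and the initialization of the algorithm) to obtain efficient \emph{basic feasible} solutions constructively via the simplex method; your argument therefore anticipates those results, whereas the paper's order-theoretic argument does not by itself localize the efficient point at a vertex. Your closing remark on the meaning of ``bounded above'' (existence of a uniform upper bound $u$ with $Cx \leqq u$ on $P$) matches the paper's intended reading.
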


\begin{proof}
    Let $C(P) = \{Cx: Ax=b, x\geqq 0\}$ be the program's value set. Then, $C(P)$ is bounded above
    by hypothesis, and, being 
    a polyhedron, is closed. Therefore, $C(P)$ admits a maximal element
    $\bar{v}$, for which we find $\bar{x}\geqq 0$ with $A\bar{x}=b$ such that $C\bar{x}=\bar{v}$.
\end{proof}


This ensures the existence of efficient policies $\pi^*\in \Pi^E$, because 
there exist efficient solutions $x^*$ of (\ref{vLP}).
Namely,
the polyhedron
$P$, while in the first place nonempty due to  $\Pi \not=\emptyset$, 
is bounded, as the $x_{\pi,t}(j,a)$ and $x_{\pi,T}(j)$ represent probabilities. Then its value set $\{Cx: x\in P\}$ is also bounded, so that the assumptions of Lemma \ref{existence} are met. 

We may obtain slightly more information.

In Section \ref{sect_problem}, we indicated a relationship, pointed out by Evans and Steuer \cite{7}, between efficient solutions of a vLP and optimal solutions
of scalar LPs where the objective is a positive linear combination of the original criteria $c_i^Tx$. In application of this relationship, and as a consequence of Theorem \ref{centralthm}, we derive another necessary and sufficient condition for the efficiency of a policy.

\begin{corollary}
\label{cor3}
A policy $\pi$ is efficient if and only if there are $k$ 
scalars $p_1>0, ..., p_k>0$ such that $x_{\pi}$ is an optimal solution of the linear program $\max\{(p^{T}C)x: x \in P\}$.
\end{corollary}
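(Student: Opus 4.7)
The plan is to reduce the corollary to the classical scalarization theorem for vector linear programs by composing it with Theorem \ref{centralthm}. More precisely, Theorem \ref{centralthm} tells us that $\pi$ is efficient for (vMDP) if and only if $x_\pi$ is an efficient solution of (\ref{LP}). So all that remains is to translate the statement ``$x_\pi$ is efficient for (\ref{LP})'' into ``there exist $p_1,\dots,p_k>0$ such that $x_\pi$ maximizes $(p^T C)x$ over $P$''; this is exactly the well-known weighted-sum scalarization result referenced in Section \ref{sect_problem} and attributed there to Geoffrion \cite{9} and Evans--Steuer \cite{7}.

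For the ``if'' direction, I would argue directly: suppose $p_1,\dots,p_k>0$ and $x_\pi$ maximizes $(p^TC)x$ over $P$. If $x_\pi$ were not efficient, there would exist $x'\in P$ with $Cx'\geqq Cx_\pi$ and $Cx'\neq Cx_\pi$; multiplying componentwise by the strictly positive $p_i$ and summing yields $p^TCx' > p^TCx_\pi$, contradicting optimality. Hence $x_\pi$ is efficient for (\ref{LP}) and by Theorem \ref{centralthm}, $\pi$ is efficient. This direction is entirely routine.

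The ``only if'' direction is the substantive one and is where I would invoke the cited scalarization theorem. Starting from an efficient policy $\pi$, Theorem \ref{centralthm} gives that $x_\pi\in P_E$. Because $P$ is a polyhedron and $C$ is linear, the value set $CP$ is a convex polyhedron, so one may apply the standard result (Geoffrion \cite{9}, Evans--Steuer \cite{7}): for any efficient solution of a vLP over a convex polyhedron there exist strictly positive weights $p_1,\dots,p_k>0$ such that the solution maximizes $p^TCx$ over $P$. Applied to $x_\pi$, this produces the desired weights.

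The main obstacle is really just confirming that the hypotheses of the Geoffrion/Evans--Steuer scalarization theorem are met here (convexity of the feasible set and linearity of the criteria, so as to get \emph{strictly} positive weights rather than merely nonnegative ones); once that is observed, the corollary assembles in a single line from Theorem \ref{centralthm} and the scalarization result. Since the paper has already set up (\ref{LP}) as a genuine vLP over a polyhedron, no further work is required and the proof amounts to a clean citation plus the easy sufficiency direction sketched above.
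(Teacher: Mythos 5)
Your proposal is correct and matches the paper's own argument: the paper likewise combines Theorem \ref{centralthm} with the Evans--Steuer scalarization characterization (their Corollary 1.4) to obtain the equivalence in one step. Your extra direct verification of the ``if'' direction is a harmless elaboration of what that cited result already contains.
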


\begin{proof}
We know from Theorem \ref{centralthm} that $\pi$ is an efficient policy if and only if $x_{\pi}$ is an efficient solution of (\ref{LP}). Evans and Steuer's Corollary 1.4 offers the following characterization of efficiency: a point $x^{0} \in P$ is efficient for (\ref{LP}) if and only if there exist $p_1, ..., p_k > 0$ such that $(p^{T}C)x^{0} = \max\{(p^{T}C)x: x \in P\}$. This proves the desired equivalence. 
\end{proof}

Recall the following common linear programming terminology. Let the variable index set $\{1, ..., n\}$ be 
partitioned into a subset $B$ of size $m$ and its complement $N$. 
If the square sub-matrix $A_B$ of $A$ with columns in $B$ has maximal rank $m$, we say it is a \textit{basis} of the vLP. The variables in $B$ are then called the \textit{basic variables}. The system $Ax = b$ can be written as $Ax = A_Bx_B + A_Nx_N = b$, with $A_N$ of size $m \times (n-m)$ and $x=(x_B,x_N)$ partitioned accordingly. The \textit{basic solution} associated with $B$ is the unique solution $x$ of $Ax = b$ where the non-basic variables are set to zero, namely $x = (A_B^{-1}b, 0)$. We say $x$ is basic \textit{feasible} if $x_B = A_B^{-1}b \geqq 0$. A basic feasible solution is \textit{degenerate} if one or more basic variables equal zero. The set of basic feasible $x$ will be denoted $P_B$,
and these determine the vertices of the polyhedron $P$.} 

It is well known that if an ordinary linear program has an optimal solution, then it has at least one optimal basic feasible solution. Corollary \ref{cor3} has therefore the following consequence: 

\begin{proposition}
\label{prop5}
Program {\rm (\ref{LP})} has efficient solutions, and equivalently, {\rm (\ref{program})} has  efficient policies. Moreover, there exist efficient
basic feasible solutions of {\rm (\ref{vLP})}.
\end{proposition}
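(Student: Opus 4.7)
The plan is to assemble the proposition from three ingredients already available: nonemptiness and boundedness of $P$, the existence lemma for vLPs with bounded value set (Lemma \ref{existence}), and the scalarization characterization of efficiency underlying Corollary \ref{cor3}.

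First I would verify that the feasible polyhedron $P$ is nonempty and bounded. Nonemptiness follows from $\Pi \neq \emptyset$ together with Proposition \ref{prop1}, which gives $x_\pi \in P$ for any policy $\pi$. Boundedness is immediate from the probabilistic interpretation of the coordinates: each $x_{\pi,t}(s,a)$ and $x_{\pi,T}(s)$ is a sum of $\alpha$-weighted conditional probabilities, hence lies in $[0,1]$. Consequently, the value set $\{Cx : x \in P\}$ of (\ref{LP}) is nonempty and bounded above, so Lemma \ref{existence} supplies an efficient solution $x^* \in P_E$. Through the bijection of Theorem \ref{centralthm}, the associated regular policy $\pi^{x^*}$ is then efficient for (\ref{program}).

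Second, to obtain an efficient basic feasible solution I would leverage Corollary \ref{cor3}. Starting from the efficient $x^*$ just produced, the Evans--Steuer scalarization invoked in the proof of Corollary \ref{cor3} yields weights $p_1,\dots,p_k > 0$ such that $x^*$ maximizes the scalar linear functional $(p^{\ttop} C)x$ over $P$. This ordinary scalar LP has a finite maximum attained at $x^*$, and $P$ in its standard form $\{x : Ax = b,\ x \geqq 0\}$ has vertices because $A$ has full row rank by Lemma \ref{A_rank}. By the fundamental theorem of linear programming, the maximum is then attained at some basic feasible solution $\hat{x} \in P_B$. Applying the converse direction of the scalarization (any $\hat{x} \in P$ that maximizes $(p^{\ttop} C)x$ with $p > 0$ is efficient for (\ref{LP})), we conclude that $\hat{x}$ is an efficient basic feasible solution of (\ref{LP}), and by Theorem \ref{centralthm} its preimage $\pi^{\hat{x}}$ is again an efficient policy.

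There is no real obstacle here; the only care needed is bookkeeping. One must ensure that the standard-form polyhedron $P$ actually admits vertices so that the fundamental theorem of LP applies, which is guaranteed by $\rank A = m$ from Lemma \ref{A_rank}. One must also use the two directions of the Evans--Steuer scalarization consistently: once to extract positive weights $p$ from the efficiency of $x^*$, and once to re-derive efficiency of a vertex optimum of the scalarized LP.
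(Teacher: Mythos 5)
Your proposal is correct and follows essentially the same route as the paper: existence of an efficient solution via boundedness of $P$ and Lemma \ref{existence}, then Corollary \ref{cor3} applied in both directions together with the fundamental theorem of linear programming to pass from an efficient point to an efficient basic feasible solution. The only difference is cosmetic — you fold the existence argument (which the paper places in the discussion preceding the proposition) into the proof itself and make explicit the full-rank condition from Lemma \ref{A_rank} that guarantees vertices exist.
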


\begin{proof}
Let $x^0$ be efficient for (\ref{LP}), or equivalently, $\pi^0=\pi_{x^0}$ an efficient policy. Then, by Corollary \ref{cor3}, $x^0$ is an optimal solution
of a scalar linear program   $\max\{(p^{T}C)x: x \in P\}$ with suitable weights $p_i > 0$. 
As the latter is in canonical form, it
admits also an optimal basic feasible solution, say $x^*$. But by Corollary \ref{cor3} backwards, $x^*$ is 
efficient for (\ref{LP}), hence
the claim.
\end{proof}

This raises obviously the question  which  policies correspond to basic feasible solutions, that is, to vertices of $P$. We expect these to be the deterministic ones,  
and this will be clarified in the next two sections.
\\

\section{Deterministic policies -  regular case}
\label{sect_regular_case}

We relate the basic feasible solutions of (\ref{vLP}) 
to deterministic policies, starting with the regular case. 
Observe that for the scalar infinite-horizon case the corresponding question is discussed in
\cite[p. 245]{16}, where  
extreme points are shown to correspond to {\it stationary} deterministic policies. 
This is no longer true when optimizing over a finite horizon. Indeed, as we shall see, there is a one-to-one correspondence between the extreme points of $P$, respectively,
basic feasible solutions of the vLP, and {\it all} deterministic policies, stationary or otherwise.


\begin{lemma}
\label{bfs_unique_action}
Suppose the vMDP is regular. Let $x$ be a basic feasible solution of {\rm (\ref{LP})}. Then for each $t = 1, ..., T-1$ and $j \in \mathbb{S}$, there exists an action $a' \in \mathbb{A}_j$ such that $x_t(j, a') > 0$ and $x_{t}(j, a) = 0$ for all $a \neq a'$ in $\mathbb{A}_j$.
\end{lemma}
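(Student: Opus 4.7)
The plan is a dimension count that pits the number of nonzero coordinates a basic feasible solution may carry against a lower bound forced by regularity. By Lemma \ref{A_rank}, the constraint matrix $A$ has full rank $m = ST$, so at any basic feasible solution $x$ the $n - m$ non-basic coordinates vanish; consequently $x$ has at most $m = ST$ strictly positive coordinates.

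Regularity supplies the matching lower bound. By Proposition \ref{prop2}, $x = x_{\pi^x}$ for a unique regular policy $\pi^x$, and since the vMDP is regular every state $j \in \mathbb{S}$ is reachable under $\pi^x$ at every epoch. Applying Lemma \ref{lem1}(i), for each $t = 1, \dots, T-1$ and each $j \in \mathbb{S}$ the sum $\sum_{a \in \mathbb{A}_j} x_t(j,a)$ is strictly positive, so at least one coordinate $x_t(j,a)$ is nonzero at that state-epoch pair. Lemma \ref{lem1}(ii) likewise yields $x_T(j) > 0$ for every $j$. Adding these contributions, $x$ carries at least $(T-1)S + S = ST$ strictly positive coordinates.

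Combining the two bounds forces equality: the total count of positive coordinates is exactly $m = ST$. The $S$ terminal entries $x_T(j)$ already account for $S$ positives, so the remaining $(T-1)S$ positives must be distributed across the $(T-1)S$ non-terminal state-epoch pairs $(j,t)$. Since each such pair contributes at least one positive, the only possibility is exactly one positive action per $(j,t)$ with $t < T$, which is the desired conclusion.

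I do not foresee a serious obstacle: the heavy machinery (the bijection with $P$, the rank calculation of Lemma \ref{A_rank}, and the reachability characterization of Lemma \ref{lem1}) has already been established, and the lemma reduces to a clean matching of upper and lower dimension counts. The one subtlety worth flagging is the role of regularity of the \emph{vMDP} as opposed to regularity of the policy $\pi^x$: it is the former that guarantees reachability at every $(j,t)$ and thereby ensures that the lower bound actually matches $m$.
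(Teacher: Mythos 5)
Your proof is correct and follows essentially the same route as the paper's: bound the number of nonzero components of a basic feasible solution above by $m = ST$ via Lemma \ref{A_rank}, use regularity together with Lemma \ref{lem1} to force at least one positive entry at each of the $ST$ state--epoch pairs, and conclude by pigeonhole that each non-terminal pair carries exactly one. Your explicit appeal to Proposition \ref{prop2} to justify applying Lemma \ref{lem1} to an arbitrary $x \in P$, and your remark on which notion of regularity is doing the work, are minor points of added care rather than a different argument.
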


\begin{proof}
By Lemma \ref{A_rank}, $x$ contains at most $m$ non-zero components. Its $N$ terminal components, $x_T(1)$ through $x_T(N)$, are all non-zero by Lemma \ref{lem1}(ii). Thus, among the remaining $n - N$ components, at most $m - S= (T-1)\cdot N$ are non-zero. Now Lemma \ref{lem1}(i) implies that, for each $t$ and $j$, there is at least one action $a' \in \mathbb{A}_j$ satisfying $x_{t}(j, a') > 0$. 
But there can be only one such action for each $t$ and $j$; otherwise, we would have more than $(T-1)\cdot N$ non-zero variables within the non-terminal portion of $x$. Thus, $a'$ is unique for each $t$ and $j$, and that means $x_t(j,a') > 0$ and $x_t(j, a) = 0$ for all $a \neq a'$ in $\mathbb{A}_j$.
\end{proof}


\begin{corollary}
\label{cor4}
If the vMDP is regular, then all basic feasible solutions of {\rm (\ref{LP})} are non-degenerate. In other words, they contain exactly $m$ non-zero components.
\end{corollary}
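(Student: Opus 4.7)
The plan is to combine Lemma \ref{bfs_unique_action} with Lemma \ref{lem1}(ii) and a simple count of non-zero components. Recall that in a basic feasible solution of a linear program with constraint matrix of rank $m$, the number of non-zero components is at most $m$; non-degeneracy means equality. So I just need to produce $m$ components of $x$ that I can certify to be strictly positive, and since $m = S\cdot T$ (the dimension of the range of $A$, as stated before Lemma \ref{A_rank}), I should look for $S$ positive components at each of the $T$ epochs.

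At epochs $t = 1, \dots, T-1$, Lemma \ref{bfs_unique_action} gives, for every state $j \in \mathbb{S}$, exactly one action $a' \in \mathbb{A}_j$ with $x_t(j, a') > 0$. This yields exactly $(T-1) \cdot S$ positive components in the non-terminal portion of $x$. At epoch $T$, I invoke regularity: since the vMDP is regular, every state $j \in \mathbb{S}$ is reachable at every epoch under every policy, in particular under the policy $\pi^x$ associated with $x$ via Proposition \ref{prop2}. By Lemma \ref{lem1}(ii) this gives $x_T(j) = x_{\pi^x,T}(j) > 0$ for every $j \in \mathbb{S}$, contributing another $S$ positive components.

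Adding up, $x$ has at least $(T-1)S + S = TS = m$ strictly positive components. Since a basic feasible solution has at most $m$ non-zero components, this count must be exact, and hence $x$ is non-degenerate.

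There is no real obstacle: the only subtle point is making explicit that regularity of the vMDP ensures each $x_T(j)$ is positive, since the argument in Lemma \ref{bfs_unique_action} only handled the non-terminal epochs. Otherwise the corollary is essentially a pigeonhole count against the bound $m$ on the number of positive components in a basic feasible solution.
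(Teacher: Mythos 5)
Your proof is correct and follows essentially the same route as the paper: the paper's one-line proof cites exactly Lemma \ref{bfs_unique_action} (one positive component per state at each of the $T-1$ decision epochs) and Lemma \ref{lem1}(ii) (positivity of the $S$ terminal components under regularity), and concludes by the same count $(T-1)S + S = m$ against the bound of $m$ non-zeros for a basic feasible solution. Your explicit treatment of the terminal epoch is precisely what the paper's citation of Lemma \ref{lem1}(ii) is meant to supply.
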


\begin{proof}
The assertion follows directly from Lemmas \ref{lem1}(ii) and \ref{bfs_unique_action}.
\end{proof}

As is well known, basic feasible solutions represent extreme points,
or vertices, of the constraint polyhedron $P$. 
When degeneracy occurs in the matrix representation of $P$, a vertex
may allow several basic feasible solutions, which as is well known
necessitates the use of anti-cycling rules in the simplex algorithm. Remarkably therefore, Corollary 
\ref{cor4} tells us that LP-degeneracy is impossible in (\ref{vLP}) if the vMDP is regular. In that case basic feasible solutions are in one-to-one correspondence with the vertices of $P$.

\begin{corollary}
\label{cor5}
Suppose $x$ is a basic feasible solution of {\rm (\ref{LP})}. If the vMDP is regular, then the policy $\pi^x$ is deterministic.  
\end{corollary}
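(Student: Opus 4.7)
The plan is to combine Lemma \ref{bfs_unique_action} with the inversion formula (\ref{pi_x_equation}) that recovers a regular policy from a state-action frequency vector. Since the vMDP is regular, every state is reachable at every epoch under every policy, and in particular under $\pi^x$. Thus by Lemma \ref{lem1}(i), the denominator $\sum_{a' \in \mathbb{A}_s} x_t(s,a')$ in (\ref{pi_x_equation}) is strictly positive for every $(t,s)$ with $t \leq T-1$, so the formula is applicable without ambiguity.

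Next, by Lemma \ref{bfs_unique_action}, for each fixed $t \in \{1,\dots,T-1\}$ and $j \in \mathbb{S}$ there exists a unique action $a^\star = a^\star(t,j) \in \mathbb{A}_j$ such that $x_t(j,a^\star) > 0$ and $x_t(j,a) = 0$ for every $a \neq a^\star$ in $\mathbb{A}_j$. Substituting into (\ref{pi_x_equation}) then gives
\begin{equation*}
q(a^\star \,|\, j, \pi^x_t) \;=\; \frac{x_t(j,a^\star)}{x_t(j,a^\star)} \;=\; 1, \qquad
q(a \,|\, j, \pi^x_t) \;=\; \frac{0}{x_t(j,a^\star)} \;=\; 0 \quad \text{for } a \neq a^\star.
\end{equation*}
Hence the decision rule $\pi^x_t$ puts all mass on $a^\star(t,j)$ in state $j$, which is by definition deterministic. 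Since this holds for every epoch $t$, the policy $\pi^x$ is deterministic.

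There is no real obstacle: the work has already been absorbed into Lemma \ref{bfs_unique_action} (where the rank count of $A$ and the non-vanishing of the terminal components are used) and into Proposition \ref{prop2} (which provided the inversion formula). The only subtlety to flag is that regularity is needed precisely to guarantee reachability of every $(t,j)$, so that formula (\ref{pi_x_equation}) applies verbatim rather than via the $0/0$ convention; this is what makes the regular case cleaner than the general one handled in the next section.
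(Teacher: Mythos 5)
Your proof is correct and follows essentially the same route as the paper: invoke Lemma \ref{bfs_unique_action} to isolate the unique action with positive frequency at each state-epoch pair, then apply the return formula (equivalently, Lemma \ref{lem2}) to conclude that $\pi^x$ puts unit mass on that action. Your explicit remark that regularity guarantees reachability, so the formula applies without the $0/0$ convention, is a point the paper leaves implicit but is the same argument.
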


\begin{proof}
Lemma \ref{bfs_unique_action} together with the fact that $x = x_{\pi^x}$ implies that for each $t = 1, ..., N-1$ and $j \in \mathbb{S}$, we have $x_{\pi^x, t}(j, a') = x_t(j, a') > 0$ 
for a single action $a' \in \mathbb{A}_j$ and $x_{\pi^x, t}(j, a) = 0$ for all $a \neq a'$. By Lemma \ref{lem2}, 
$\pi^x$ must be deterministic. 
\end{proof}


By Proposition \ref{prop5},  the efficient value set of (\ref{LP}) is non-empty, and
there is at least one efficient vertex. 
Corollary \ref{cor5} says that if $x$ is basic feasible, hence a vertex, $\pi^x$ is deterministic. Finally, we know from the main theorem that 
$x \in P$ is efficient for (\ref{LP}) if and only if $\pi^x$ is an efficient policy. 
From this we infer the following fact about program (\ref{program}).

\begin{corollary}
\label{cor6}
Suppose the process is regular. Then program {\rm (vMDP)} 
admits a deterministic efficient policy. 
\end{corollary}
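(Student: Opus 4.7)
The plan is to chain together three results already established in the excerpt: Proposition \ref{prop5}, Corollary \ref{cor5}, and Theorem \ref{centralthm}. The idea is that under regularity, efficient basic feasible solutions of the linear program are precisely the objects we need, and the bijection $x \mapsto \pi^x$ converts them into the sought deterministic efficient policies.

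First, I would invoke Proposition \ref{prop5} to obtain an efficient basic feasible solution $x^*$ of (\ref{vLP}). This is available unconditionally (it does not require regularity), and it is the key ingredient that funnels the abstract existence of efficient solutions down to the vertex level of the polyhedron $P$.

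Next, because the vMDP is assumed regular, Corollary \ref{cor5} applies directly to $x^*$ and yields that the associated policy $\pi^{x^*} \in \Pi_r$ is deterministic. Finally, by Theorem \ref{centralthm}, the efficiency of $x^*$ for (\ref{vLP}) transports to efficiency of $\pi^{x^*}$ for (\ref{program}). Combining the two gives a deterministic efficient policy, as required.

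There is no real obstacle here, since all the substantive work (existence of efficient basic feasible solutions, determinism of vertices under regularity, and the vLP/vMDP equivalence) has already been carried out in the preceding results; the corollary is essentially a bookkeeping step that assembles them in the right order. The only point to be careful about is that Corollary \ref{cor5} is stated for $\pi^x$ (the regular representative of the equivalence class), so one should note explicitly that $\pi^{x^*}$ lies in $\Pi_r \cap \Pi^E$ and is deterministic, which is precisely what is claimed.
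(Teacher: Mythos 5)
Your proposal is correct and follows exactly the paper's own reasoning: the authors likewise chain Proposition \ref{prop5} (existence of an efficient basic feasible solution), Corollary \ref{cor5} (determinism of $\pi^x$ under regularity), and Theorem \ref{centralthm} (transfer of efficiency from $x$ to $\pi^x$) in the discussion immediately preceding the corollary. No gaps; your remark about $\pi^{x^*}$ being the regular representative is consistent with how the paper uses the bijection $\Pi_r \leftrightarrow P$.
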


The converse of Corollary \ref{cor5} is also true:

\begin{proposition} 
\label{prop4}
Suppose the process is regular.
If $\pi$ is a deterministic policy, then $x_{\pi}$ is a basic feasible solution of {\rm (\ref{LP})}.
\end{proposition}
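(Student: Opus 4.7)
The plan is to exhibit explicitly a basis $B$ of $A$ for which $x_\pi$ is the associated basic feasible solution. Since the vMDP is regular, $\pi \in \Pi_r$, and being deterministic it selects at each epoch $t < T$ and state $s$ a unique action $a_{t,s}$ with $q(a_{t,s}\,|\,s,\pi_t) = 1$. Directly from (\ref{eq2}), $x_{\pi,t}(s,a) = 0$ for every $a \neq a_{t,s}$, so the support of $x_\pi$ is contained in
$$B = \{(t,s,a_{t,s}) : 1 \leq t \leq T-1,\, s\in \mathbb{S}\} \cup \{(T,s) : s \in \mathbb{S}\},$$
whose cardinality is exactly $TS = m$, the number of constraints in Proposition \ref{prop1}.

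The key step is to verify that the $m \times m$ submatrix $A_B$ is invertible. I would do this using exactly the forward-triangular induction that appeared in the proof of Lemma \ref{A_rank}: when the variables are restricted to those indexed by $B$, constraint (a) fixes $x_1(j,a_{1,j})$ for each $j$; then (b) determines $x_{t+1}(j,a_{t+1,j})$ from $x_t|_B$ recursively via $x_{t+1}(j,a_{t+1,j}) = \sum_{s} p_t(j\,|\,s,a_{t,s})\,x_t(s,a_{t,s})$; and (c) finally fixes $x_T(j)$ explicitly. This yields a unique solution for every right-hand side, so in particular $A_B y_B = 0$ forces $y_B = 0$, and $B$ indexes a basis.

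Since $x_\pi$ solves $Ax_\pi = b$ by Proposition \ref{prop1} and its support lies in $B$, the uniqueness just established forces $x_\pi|_B = A_B^{-1}b$, which is nonnegative because $x_\pi \geq 0$. Hence $x_\pi$ is the basic feasible solution associated with the basis $B$. The only real work is the triangular-induction verification that $A_B$ is invertible; once that is in place, the rest is bookkeeping. Regularity of the vMDP is used in a secondary way, via Lemma \ref{lem1}, to ensure that every one of the $m$ basic components $x_{\pi,t}(s,a_{t,s})$ and $x_{\pi,T}(s)$ is strictly positive, so that $x_\pi$ is non-degenerate in conformity with Corollary \ref{cor4}.
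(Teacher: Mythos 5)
Your proof is correct, but it takes a genuinely different route from the paper's. The paper argues by contradiction: if $x_\pi$ were not basic, it would be a proper convex combination $\sum_i \lambda_i x_{\pi_i}$ of distinct basic feasible solutions; non-negativity forces all $x_{\pi_i}$ to share the zero pattern of $x_\pi$, and the non-degeneracy of basic feasible solutions in the regular case (Corollary \ref{cor4}) then forces all $x_{\pi_i}$ to share the same non-basic set and hence to coincide --- a contradiction. You instead exhibit the basis directly: the support of $x_\pi$ lies in the $m = TS$ indices selected by the deterministic actions $a_{t,s}$ plus the terminal variables, and the corresponding $A_B$ is invertible by the same forward-triangular recursion used in Lemma \ref{A_rank}, so $x_\pi = (A_B^{-1}b, 0) \geqq 0$ is basic feasible by definition. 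Both arguments are sound; yours is essentially the construction the paper defers to Lemma \ref{lem9} and the Appendix (the ``regular basic feasible solutions'' built from (${\rm a}'$)--(${\rm d}'$)). What your version buys is notable: neither the containment of the support in $B$ nor the invertibility of $A_B$ uses regularity of the vMDP, so your argument in fact establishes that $x_\pi$ is a vertex with a regular basic representation for \emph{any} deterministic $\pi$ --- i.e., it yields Proposition \ref{new1} directly, without the $\epsilon$-perturbation argument the paper deploys in the general case; regularity enters only to guarantee strict positivity of the basic components (non-degeneracy), exactly as you observe. What the paper's route buys is economy at this point in the text: it reuses Lemma \ref{bfs_unique_action} and Corollary \ref{cor4} and avoids introducing the explicit triangular basis until Section \ref{sect_general_case}.
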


\begin{proof}
It follows from the main theorem that
$x_\pi$ is a feasible solution of (\ref{LP}). 
Suppose now, for the sake of contradiction, that $x_{\pi}$ is not basic. Then $x_{\pi}$ is a convex combination of 
finitely many basic feasible solutions $x^1,\dots,x^r$,
$$
x_\pi = \sum_{i=1}^r \lambda_i x^i.
$$
By Corollary \ref{cor3} there exist distinct deterministic policies
$\pi_i$ with $x^i = x_{\pi_i}$. For each $t = 1, ..., T-1$ and $s \in \mathbb{S}$, there is, by Lemma \ref{bfs_unique_action}, a unique action $a' \in \mathbb{A}_s$, depending on $\pi$, $t$ and $s$,  such that $x_{\pi, t}(s, a') > 0$. In view of the non-negativity of 
the $x^i$  it follows from $x_{\pi} = \sum_{i=1}^r\lambda_i x_{\pi_i}$ and $\lambda_i \geqq 0$ that 
\begin{equation*}x_{\pi_i, t}(s, a) =  x_{\pi, t}(s, a) = 0\end{equation*} 
for all $a \neq a'$ and all $i$. Given the non-degeneracy of the $x_{\pi_i}$ (Corollary \ref{cor4}), these $n-m$ zero components must correspond to the non-basic variables in both $x$ and the $x_{\pi_i}$. Consequently all $x_{\pi_i}$ have the same non-basic variables. Since the values of the basic variables are uniquely determined by the choice of the non-basic variables, all $x_{\pi_i}$ coincide: a contradiction. Consequently $x_{\pi}$ is a basic feasible solution of (\ref{LP}).  
\end{proof}

A characterization of deterministic policies in terms of (\ref{LP}) follows naturally from Corollary \ref{cor5} and Proposition \ref{prop4}.

\begin{corollary}
\label{cor7}
%
Suppose the process is regular. Then under
the one-to-one correspondence $x \mapsto \pi^x$ from $P$ to $\Pi$, 
vertices $x \in P_B$, or equivalently, basic feasible solutions of {\rm (${\rm vLP}$)}, are mapped to deterministic
policies $\pi^x\in  \Pi^{D}$, and conversely, every $\pi \in \Pi^{D}$ leads back to a 
basic feasible solution $x_\pi\in P_B$ under $\pi \mapsto x_\pi$. 
\end{corollary}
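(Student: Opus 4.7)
The corollary is essentially an assembly statement that combines the two preceding results (Corollary \ref{cor5} and Proposition \ref{prop4}) with the bijection $\pi \leftrightarrow x_\pi$ established in Proposition \ref{prop2}. So the plan is to not prove anything substantially new, but rather to verify that the two directions fit together under that bijection.

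First I would take $x \in P_B$, a basic feasible solution, which corresponds to a vertex of $P$. By Corollary \ref{cor5}, under the regularity assumption on the vMDP, the associated regular policy $\pi^x \in \Pi_r$ is deterministic, i.e.\ $\pi^x \in \Pi^D$. This handles the forward direction: $x \mapsto \pi^x$ sends $P_B$ into $\Pi^D$.

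Next I would take $\pi \in \Pi^D$. Since regularity of the process implies $\Pi = \Pi_r$ (as noted right after Lemma \ref{unreach2}), $\pi$ lies in the domain of the bijection $\pi \mapsto x_\pi$. By Proposition \ref{prop4}, $x_\pi \in P_B$. Moreover, by the inversion formulas $x_{\pi^x} = x$ and $\pi^{x_\pi} = \pi$ from Proposition \ref{prop2}, the two assignments $x \mapsto \pi^x$ and $\pi \mapsto x_\pi$ are mutual inverses, and their restrictions to $P_B$ and $\Pi^D$ respectively remain mutual inverses. This gives the one-to-one correspondence claimed.

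There is really no hard step here: the proof amounts to two citations plus the observation that regularity collapses $\Pi_r$ into $\Pi$, so that the bijection of Proposition \ref{prop2} already encodes the correspondence; one only has to check that it restricts correctly to the subsets $P_B$ and $\Pi^D$, which is immediate from Corollary \ref{cor5} and Proposition \ref{prop4}.
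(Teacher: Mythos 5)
Your proof is correct and follows essentially the same route as the paper: cite Corollary \ref{cor5} for the direction $P_B \to \Pi^D$, Proposition \ref{prop4} for the converse, and the bijection $\pi \mapsto x_\pi$ (with $\Pi = \Pi_r$ under regularity) to conclude the one-to-one correspondence. No gaps.
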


\begin{proof}
%
Let $\pi$ be a deterministic policy. By Proposition \ref{prop4}, 
$x_\pi$ is basic feasible for (\ref{LP}). 
Conversely, suppose $x\in P$ is basic feasible for (\ref{LP}). Writing it as $x=x_\pi$, and bearing in mind that 
$\pi = \pi^{x_\pi}$, $\pi$ is deterministic by Corollary \ref{cor5}. 
\end{proof}

It is readily seen from its construction that $\Pi^{D}$ has cardinality  $\bigl(\prod_{s = 1}^{N}k_s\bigr)^{T-1}$.

\begin{corollary}
\label{cor8}
Suppose the process is regular. Then
program {\rm (\ref{LP})} has exactly $\bigl(\prod_{s = 1}^{N}k_s\bigr)^{T-1}$ basic feasible solutions,
and equivalently, that many vertices.
\end{corollary}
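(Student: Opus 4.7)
The plan is to leverage the bijection established in Corollary \ref{cor7} and reduce the claim to a straightforward counting of deterministic policies. Since regularity implies $\Pi=\Pi_r$, the map $\pi\mapsto x_\pi$ is a bijection from $\Pi$ onto $P$, and by Corollary \ref{cor7} its restriction to $\Pi^D$ is a bijection onto the set $P_B$ of basic feasible solutions of (\ref{LP}). Therefore $|P_B| = |\Pi^D|$, and it remains only to compute $|\Pi^D|$.

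To count $|\Pi^D|$, I recall that a deterministic policy is a tuple $\pi=(\pi_1,\dots,\pi_{T-1})$ in which each $\pi_t$ is a deterministic decision rule. Such a $\pi_t$ is determined by the choice, for every state $s\in \mathbb{S}$, of a single action in $\mathbb{A}_s$, so the number of deterministic decision rules available at each epoch is $\prod_{s=1}^{S} k_s$. Since the $\pi_t$ are chosen independently across the $T-1$ decision epochs, one obtains
\begin{equation*}
|\Pi^D| = \Bigl(\prod_{s=1}^{S} k_s\Bigr)^{T-1},
\end{equation*}
which by the bijection above equals the number of basic feasible solutions.

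No genuine obstacle arises: the only thing one has to be careful about is that distinct basic feasible solutions really correspond to distinct deterministic policies (and vice versa), but this is exactly what Corollary \ref{cor7} provides. Moreover, by Corollary \ref{cor4}, under regularity every basic feasible solution is non-degenerate, so basic feasible solutions are in one-to-one correspondence with vertices of $P$; hence the same count applies to the number of vertices. (The appearance of $N$ in the statement, matching the $N$ used elsewhere in Section \ref{sect_regular_case}, should be read as $S$, the number of states.)
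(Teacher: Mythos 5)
Your proof is correct and follows exactly the route the paper intends: the paper gives no explicit proof but relies on the immediately preceding remark that $\lvert\Pi^D\rvert = \bigl(\prod_{s} k_s\bigr)^{T-1}$, the bijection of Corollary \ref{cor7}, and the non-degeneracy statement of Corollary \ref{cor4} to identify basic feasible solutions with vertices. Your reading of $N$ as $S$ is also the right one; the paper uses the two symbols interchangeably for the number of states.
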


The matrix-vector representation of these basic feasible solutions is given in the appendix. They correspond to specific bases $A_B$ of lower triangular shape. Although these do not exhaust the bases of (\ref{vLP}), other choices lead to {\it infeasible} basic solutions when the process is regular. 
\\

\section{Deterministic policies -  general case}
\label{sect_general_case}

We now relate deterministic policies to basic feasible solutions in the absence of regularity. 
Here the situation is more complicated, as
vertices $x\in P_B$ may have several
associated basic feasible solutions, and equivalence
of policies not distinguishable through their state-action frequencies complicates
the counting. This requires some
preparation.  We have the following

\begin{lemma}
\label{lem9}
Suppose for every $t=1,\dots,T-1$ and every $s\in \mathbb{S}$
an action $a_{st}\in \mathbb{A}_s$ is fixed. Then there exists a unique state-action frequency vector
$x\in P$ satisfying $x_t(s,a_{st}) \geqq  0$ and $x_t(s,a')=0$ for all $a'\in \mathbb{A}_s$ with $a'\not=a_{st}$.
Moreover, $x$ is a vertex of $P$. 
\end{lemma}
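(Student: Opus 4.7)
The plan is to prove existence, uniqueness, and the vertex property by a single inductive argument based on the defining relations (a)--(c) of $P$.

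For \emph{existence}, I would build $x$ directly by forward induction on $t$. At $t=1$, constraint (a) forces $x_1(j,a_{j1}) = \alpha(j) \geqq 0$, with all other components $x_1(j,a)=0$. Assuming $x_{t'}(s,a_{st'})$ has been defined for all $t'\leqq t$ so that the only nonzero components are the prescribed ones, define
\[
x_{t+1}(j,a_{j,t+1}) = \sum_{s\in \mathbb{S}} p_t(j\,|\,s,a_{st})\, x_t(s,a_{st}),
\]
with $x_{t+1}(j,a)=0$ for $a\neq a_{j,t+1}$. This is non-negative and satisfies (b) by construction. The terminal values $x_T(j)$ are then forced by (c). Hence $x\in P$. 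Alternatively, one could simply take the deterministic (possibly non-regular) policy $\pi$ with $q(a_{st}\,|\,s,\pi_t)=1$ and verify that $x_\pi$ has the required support, but the constructive version is cleaner and supplies the next step for free.

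For \emph{uniqueness}, the same induction applies in reverse: any $x'\in P$ with the prescribed support must satisfy $x'_1(j,a_{j1}) = \alpha(j)$ by (a) (the other summands vanish), and then
\[
x'_{t+1}(j,a_{j,t+1}) = \sum_{a\in \mathbb{A}_j} x'_{t+1}(j,a) = \sum_{s\in \mathbb{S}}\sum_{a\in \mathbb{A}_s} p_t(j\,|\,s,a)\, x'_t(s,a) = \sum_{s\in \mathbb{S}} p_t(j\,|\,s,a_{st})\, x'_t(s,a_{st})
\]
by (b), and $x'_T$ is again fixed by (c). So $x'=x$.

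For the \emph{vertex property}, suppose $x = \lambda y + (1-\lambda) z$ with $y,z\in P$ and $0<\lambda<1$. Since $y,z\geqq 0$ and $x_t(s,a)=0$ for $a\neq a_{st}$, non-negativity forces $y_t(s,a)=z_t(s,a)=0$ for $a\neq a_{st}$. Thus $y$ and $z$ both have the support pattern prescribed by the $a_{st}$, and uniqueness gives $y=z=x$. Hence $x$ is an extreme point of $P$, i.e., a vertex.

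The only mildly delicate point will be the case where some $x_t(s,a_{st})$ equals zero, which corresponds to $(s,t)$ being unreachable under the associated deterministic policy; the weak inequality $x_t(s,a_{st})\geqq 0$ in the statement is precisely what accommodates this, and the induction above handles it transparently because every step relies only on non-negativity and the linear recurrences, never on strict positivity.
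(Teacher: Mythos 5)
Your proof is correct and follows essentially the same route as the paper: both construct $x$ explicitly via the forward recurrence forced by (a)--(c) on the prescribed support, deduce uniqueness from the explicitness of that recurrence, and derive the vertex property from uniqueness. Your write-up is in fact slightly more complete, since you spell out the extreme-point argument (non-negativity forcing the support of $y$ and $z$, then uniqueness) that the paper leaves implicit.
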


\begin{proof}
Let $x$ be the unique solution of the system
\begin{itemize}
    \item[(${\rm a}'$)] $x_1(j,a_{j1}) = \alpha(j)$, $j\in \mathbb{S}$,
    \item[(${\rm b}'$)] $x_{t+1}(j,a_{j,t+1}) = \sum_{s\in \mathbb{S}} p_t(j|s,a_{st}) x_t(s,a_{st})$, $j\in \mathbb{S}$,
    \item[(${\rm c}'$)] $x_T(j) = \sum_{s\in \mathbb{S}} p_{T-1}(j|s,a_{s,T-1}) x_{T-1}(j,a_{j,T-1})$, $j\in \mathbb{S}$,
    \item[(${\rm d}'$)] $x_t(s,a')=0$ for all $a'\not=a_{st}$.
\end{itemize}
Then $x\geqq 0$, and $x\in P$, because under (${\rm d}')$ conditions (${\rm a}'$) - (${\rm c}'$)
are the same as (a)-(c), which by Lemma \ref{A_rank} have a solution. The explicit form 
above shows directly that this solution is unique. The latter proves that $x$ is a vertex of $P$.
\end{proof}

When written in  matrix-vector form, vertices $x$ satisfying (${\rm a}'$) - (${\rm d}'$)
have the specific bases $A_B$ described in the Appendix. 
It can be seen either from that matrix representation or directly from the above lemma that
there are $\bigl(\prod_{s = 1}^{N}k_s\bigr)^{T-1}$ ways of selecting the $(a_{st})_{s,t}$, which 
gives an upper bound for the number of vertices generated in that manner.
As shown by
Lemma \ref{bfs_unique_action} and Corollary \ref{cor6},  a regular process  gives
precisely $\bigl(\prod_{s = 1}^{N}k_s\bigr)^{T-1}$ vertices of $P$, which are all constructed
according to (${\rm a}'$) - (${\rm d}'$), with 
$x_t(s,a_{st}) > 0$.
What complicates the
non-regular case  is that now 
other basic feasible
solutions not constructed according to (${\rm a}'$) - (${\rm d}'$) may occur, and it is a priori not clear whether these lead to new types of vertices, or just to additional bases for old ones.  
On the other hand, there may be a drop in the number of vertices accounted for by
(${\rm a}'$) - (${\rm d}'$)
due to degeneracy. These issues will be clarified below.

\begin{definition}
    Basic feasible solutions of {\rm (\ref{vLP})} formed according to {\rm (${\rm a}'$) - (${\rm d}'$)} are termed regular.
    \end{definition}

\begin{proposition}
\label{new1}
    Let $\pi\in \Pi$ be deterministic. Then $x_\pi$ is a vertex of $P$
    which has a representation as a regular basic feasible solution of {\rm (\ref{vLP})}.
\end{proposition}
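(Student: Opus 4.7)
The plan is to read off from a deterministic $\pi$ the family of actions it prescribes, and then to verify that $x_\pi$ satisfies the defining system (${\rm a}'$)-(${\rm d}'$) of Lemma \ref{lem9}. The uniqueness assertion in that lemma will then identify $x_\pi$ with the regular basic feasible solution associated to this choice, which by the same lemma is a vertex of $P$.

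For the first step, since $\pi$ is deterministic there is, for each $t = 1,\dots,T-1$ and $s \in \mathbb{S}$, a unique action $a_{st}\in \mathbb{A}_s$ with $q(a_{st} \mid s,\pi_t) = 1$, so that $q(a \mid s,\pi_t) = 0$ for $a \neq a_{st}$. The factorization
\begin{equation*}
x_{\pi,t}(s,a) = q(a \mid s,\pi_t) \sum_{j \in \mathbb{S}} \alpha(j)\, \mathbb{P}^\pi(\textbf{S}_t = s \mid \textbf{S}_1 = j),
\end{equation*}
already used in the proof of Lemma \ref{lem2} but valid here without any reachability assumption, then immediately yields $x_{\pi,t}(s,a) = 0$ whenever $a \neq a_{st}$, establishing (${\rm d}'$). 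Substituting this into the identities of Proposition \ref{prop1}(a)-(c) collapses each sum over $\mathbb{A}_s$ to the single surviving term at $a = a_{st}$, so conditions (${\rm a}'$), (${\rm b}'$), (${\rm c}'$) follow at once.

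With (${\rm a}'$)-(${\rm d}'$) verified, the uniqueness part of Lemma \ref{lem9} forces $x_\pi$ to coincide with the regular basic feasible solution built from $(a_{st})_{s,t}$, and that same lemma asserts such a vector is a vertex of $P$, completing the argument. The only real subtlety -- and the reason a separate treatment is needed from the regular case of Section \ref{sect_regular_case} -- is that one cannot invoke the inversion formula (\ref{q_formula}) or Lemma \ref{lem2} at state-epoch pairs that $\pi$ fails to reach. The factorization displayed above is the cleanest way to sidestep this, as it delivers $x_{\pi,t}(s,a) = 0$ for $a\neq a_{st}$ even when the marginal $\sum_j \alpha(j)\mathbb{P}^\pi(\textbf{S}_t = s \mid \textbf{S}_1=j)$ vanishes.
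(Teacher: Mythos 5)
Your proof is correct, and it takes a genuinely different and more direct route than the paper. The paper perturbs the transition probabilities to $p^\epsilon_{t-1}(\cdot\,|\,\cdot,\cdot)>0$ so that the approximating processes are regular, invokes Proposition \ref{prop4} to get that each $x^\epsilon_\pi$ is a regular basic feasible solution of the perturbed polyhedron, fixes a single basis $B$ along a subsequence (there are only finitely many bases), and passes to the limit $\epsilon\to 0$ to conclude that $x_\pi=(A_B^{-1}b,0)$. You instead verify directly that $x_\pi$ satisfies the defining system (${\rm a}'$)--(${\rm d}'$) of Lemma \ref{lem9} for the action selection $(a_{st})$ prescribed by $\pi$: the factorization $x_{\pi,t}(s,a)=q(a\,|\,s,\pi_t)\sum_j\alpha(j)\mathbb{P}^\pi(\textbf{S}_t=s\,|\,\textbf{S}_1=j)$ holds with no reachability hypothesis (both sides vanish at unreachable pairs), which gives (${\rm d}'$), and Proposition \ref{prop1}(a)--(c) then collapse to (${\rm a}'$)--(${\rm c}'$); the uniqueness clause of Lemma \ref{lem9} does the rest. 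Your argument is more elementary and self-contained, and it has the added benefit of exhibiting explicitly which regular basis represents $x_\pi$, namely the one selecting the columns $(s,t,a_{st})$ dictated by $\pi$. What the paper's perturbation argument buys is uniformity of method: the same $\epsilon$-approximation is reused for the converse statement and for Proposition \ref{prop7}, and its subsequence bookkeeping naturally surfaces the degeneracy phenomenon (one vertex, several regular bases) discussed right after the proof. Your parenthetical justification for why Lemma \ref{lem2} cannot be invoked at unreachable pairs, and why the displayed factorization sidesteps this, is exactly the right point to flag.
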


\begin{proof}
    We use an approximation argument. For $\epsilon > 0$ we consider a modified vMDP
    $(\Pi, \mathbb P_\epsilon,R)$, where we change transition probabilities
by small amounts such that
$p_{t-1}^\epsilon(s|s',a') > 0$, and such that these modified probabilities
depend continuously on $\epsilon$ with
$p_{t-1}^\epsilon(s|s',a')\to p_{t-1}(s|s',a')$ as
$\epsilon \to 0$. We keep policies and rewards as is.
This makes each approximating process regular and allows us to apply the results of the previous section.

Letting $T:\Pi\to P$ denote the operator $\pi \to x_\pi$ based on (\ref{eq2}), (\ref{eq3}), we have 
similar operators $T_\epsilon:\Pi\to P_\epsilon = \{x: A^\epsilon x=b, x\geqq 0\}$
based on ${\rm (\ref{eq2})}_\epsilon$, ${\rm (\ref{eq3})}_\epsilon$ with the modified probabilities, leading to
state-action frequency vectors $x_\pi^\epsilon\in P_\epsilon$ 
parameterized by $\epsilon$. The matrices $A^\epsilon$ are obtained by the same vectorization given in the appendix, and therefore have the same 
structure. The only changes are that $P_t$-blocks are replaced  by
$P_t^\epsilon$-blocks.
By regularity, invoking Proposition \ref{prop4}, each $x_\pi^\epsilon$ is now a basic feasible solution of $P_\epsilon$, which is regular by Corollary \ref{cor8}. 
Recall that, matrix-wise, 
this means that the basis $A_B^{\epsilon}$ 
is such that one column is chosen
from every $1\times k_s$-block of each $\Sigma$-matrix. 
Note that $B$ should {\it a priori} depend on $\epsilon$, but since there are only finitely many possible bases and infinitely many $\epsilon>0$, we may without loss of generality
fix $B$, passing to a subsequence $\epsilon_i \to 0$ if necessary.

Now $A_B$, being a one-column-per-block sub-matrix in the sense of the appendix, is a feasible basis of $A$. Consequently $x = (x_B, 0) = (A_B^{-1}b, 0)$ is a regular basic feasible solution with respect to $P$. Since equations 
${\rm (\ref{eq2})}_\epsilon$, ${\rm (\ref{eq3})}_\epsilon$
defining the transformation $T_\epsilon$ depend continuously on $\epsilon$, we 
obtain (\ref{eq2}), (\ref{eq3}) in the limit and
deduce
that $x = x_\pi$. Thus, the
image $x_\pi$ of a deterministic $\pi$ is a vertex of $P$ (namely $x$), which, in addition, admits a representation as a regular basic feasible solution.
This extends Proposition \ref{prop4} to non-regular processes.  


\end{proof}

Note that fixing $B$ for a subsequence $\epsilon_i \to 0$ could mean that we had to
fix another basis $B'$ for another subsequence $\epsilon_i'\to 0$. Since the above argument works for all of these,
the limit vertex $x_\pi \in P$ could arise from different regular basic feasible solutions, so that
without regularity of the vMDP, the vLP may turn out to be degenerate. This is indeed what happens.

%
%

We also recall that there are exactly $(\Pi_{s=1}^Sk_s)^{T-1}$
deterministic policies, regardless of whether the process is regular or not. 
We know however that those equivalent in the sense of Section \ref{sect_LP} cannot be distinguished by their state-action frequency vectors, which are identical. 
Suppose a deterministic policy has $r \geqq 1$ non-reachable state-epoch pairs $(s_1,t_1),\dots,(s_r,t_r)$, where $k_{s_i} > 1$.  Then it has $(k_{s_1}-1)\cdots (k_{s_r}-1)$ equivalent 
deterministic policies.


\begin{proposition}
    Let $x$ be a vertex of $P$ with an associated regular basic feasible
    solution. 
    Then $x=x_\pi$ for a deterministic policy $\pi\in \Pi^D$, which may in addition be chosen to be regular.
\end{proposition}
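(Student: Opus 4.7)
The plan is to use Lemma \ref{lem9} as the key uniqueness tool: the regular basic feasible solution $x$ already carries within it, via the actions $a_{st}$, the recipe for a deterministic policy, and we only need to verify that this policy reproduces $x$ and can be made regular.

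First I would unpack the hypothesis: since $x$ admits a representation as a regular basic feasible solution, by definition there exist actions $a_{st} \in \mathbb{A}_s$ for every pair $(s,t)$ with $t=1,\dots,T-1$ such that $x_t(s,a_{st}) \geqq 0$ and $x_t(s,a')=0$ whenever $a'\neq a_{st}$. With this in hand, define a deterministic policy $\pi$ by setting $q(a_{st}\,|\,s,\pi_t)=1$ for each $s,t$. This is well-defined and deterministic by construction.

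Next I would prove that $x_\pi = x$. From (\ref{eq2}), since $\pi$ picks action $a_{st}$ with probability one in state $s$ at epoch $t$, we have $x_{\pi,t}(s,a)=0$ for every $a\neq a_{st}$, so $x_\pi$ automatically satisfies condition $({\rm d}')$ of Lemma \ref{lem9} with the same selection $(a_{st})$. Moreover, $x_\pi \in P$ by Proposition \ref{prop1}, so it satisfies (a)--(c). Under the constraint $({\rm d}')$ the system (a)--(c) reduces precisely to $({\rm a}')$--$({\rm c}')$. Both $x_\pi$ and $x$ therefore satisfy the full system $({\rm a}')$--$({\rm d}')$, and by the uniqueness asserted in Lemma \ref{lem9} we conclude $x_\pi = x$.

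Finally I would address regularity. The deterministic $\pi$ just constructed may fail to be regular only at state-epoch pairs $(s,t)$ that are unreachable under $\pi$, where we have selected $a_{st}$ rather than $1$. As noted in Section \ref{sect_LP}, modifying a policy at unreachable pairs by switching to $q(1\,|\,s,\pi_t)=1$ yields an equivalent policy $\pi'$ with $x_{\pi'}=x_\pi=x$, and clearly $\pi'$ remains deterministic. Hence $\pi'\in \Pi^D \cap \Pi_r$ satisfies $x = x_{\pi'}$, completing the claim.

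The only delicate point I foresee is matching conditions $({\rm b}')$ and $({\rm c}')$ when some $x_t(s,a_{st})$ happens to vanish, i.e.\ when the action prescribed at an unreachable state is irrelevant; but Lemma \ref{lem9} is stated precisely to accommodate the case $x_t(s,a_{st})=0$, so no additional work is needed there.
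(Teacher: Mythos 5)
Your proof is correct, but it takes a genuinely different route from the paper. You construct the deterministic policy directly from the column-selection data $(a_{st})$ encoded in the regular basis, observe that $x_\pi$ then satisfies $({\rm d}')$ by construction and $({\rm a}')$--$({\rm c}')$ because it lies in $P$, and invoke the uniqueness statement of Lemma \ref{lem9} to conclude $x_\pi = x$; regularization at unreachable pairs is then immediate since equivalent policies share the same state-action frequency vector. The paper instead runs its $\epsilon$-perturbation machinery: it passes to the regular approximating processes $(\Pi,\mathbb P_\epsilon,R)$, solves $A_B^\epsilon x_B^\epsilon=(\alpha,0)$ with the same basis, obtains deterministic policies $\pi_\epsilon$ via the return formula and Corollary \ref{cor5}, extracts a convergent subsequence in the compact policy space, and passes to the limit in ${\rm(\ref{eq2})}_\epsilon$, ${\rm(\ref{eq3})}_\epsilon$. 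Your argument is more elementary and self-contained --- it needs only Proposition \ref{prop1}, the explicit triangular structure behind Lemma \ref{lem9}, and the equivalence relation $\sim$ --- and it sidesteps the compactness and subsequence-extraction bookkeeping entirely; what the paper's approach buys is uniformity with the surrounding results of Section \ref{sect_general_case} (Propositions \ref{new1} and \ref{prop7}), which genuinely need the approximation technique. You correctly flagged the only delicate point, namely that $x_t(s,a_{st})$ may vanish at unreachable pairs, and Lemma \ref{lem9} is indeed stated with $x_t(s,a_{st})\geqq 0$ precisely to cover this.
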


\begin{proof}
    Let $x = (x_B, 0)$ be a vertex of $P$ where the columns in $B$ are chosen according to the column selection rule laid out in the appendix. 
    Then $A_Bx_B = (\alpha,0)$, $x_N=0$, and $x_B \geqq 0$. Using the same approximation as
in Proposition \ref{new1},
choose $x^\epsilon$ such that $A_B^\epsilon x_B^\epsilon = (\alpha,0)$, $x_N^\epsilon = 0$, where the same basic columns $B$ are chosen.
Then $x_B^\epsilon \geqq 0$ is redundant, and we have $x^\epsilon \to x$.

Using regularity of the approximating process,
there exists a unique $\pi_\epsilon \in \Pi$ obtained from $x^\epsilon$
by the return formula for $\epsilon$. In addition,  $\pi_\epsilon$ is a deterministic policy
by Corollary \ref{cor3}. Now since $\Pi$ can be considered a compact subset of 
$\mathbb R^T \times \mathbb R^S\times \mathbb R^K$, we can extract convergent subsequences
$\pi_{\epsilon_i} \to \pi \in \Pi$. Every such $\pi$ is deterministic, as this is preserved under
pointwise convergence. It follows also that the image of any such $\pi$ under $\pi \mapsto x_\pi$
is $x$. Here
we do not need the return formula, it suffices to use
${\rm (\ref{eq2})}_\epsilon$, ${\rm (\ref{eq3})}_\epsilon$
and pass to the limit $\epsilon \to 0$.  This means that every accumulation point of the $\pi_\epsilon$ is mapped to the same $x$. Finally, letting
$\pi_r$ be the regular policy with $\pi \sim \pi_r$ gives the same image $x_\pi = x_{\pi_r}$.
\end{proof}

\begin{corollary}
    The vMDP admits efficient deterministic policies.
\end{corollary}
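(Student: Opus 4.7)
The plan is to reduce the problem to the classical scalar finite-horizon case via a scalarization, then invoke the well-known existence of deterministic optimal policies for scalar MDPs.

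First, I would invoke Proposition \ref{prop5} to obtain an efficient basic feasible solution $x^*$ of (\ref{LP}). Then, by Corollary \ref{cor3}, there exist strictly positive weights $p_1,\dots,p_k > 0$ such that $x^*$ is an optimal solution of the scalar linear program
\begin{equation*}
\max\{(p^T C) x : x \in P\}.
\end{equation*}
Via the scalar case of Theorem \ref{centralthm} highlighted immediately after that theorem, this scalar LP is precisely the LP formulation of the finite-horizon scalar MDP obtained by replacing the vector reward $R$ with the scalarized reward $p^T R$. By the classical theory of finite-horizon MDPs (Kallenberg \cite{4}, via backward induction), this scalar MDP admits a deterministic optimal policy $\pi^* \in \Pi^D$. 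Consequently $x_{\pi^*}$ is an optimal solution of the scalar LP above, and invoking Corollary \ref{cor3} in the reverse direction shows that $x_{\pi^*}$ is efficient for (\ref{LP}). Theorem \ref{centralthm} then yields that $\pi^*$ is efficient for (\ref{program}), as claimed.

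The only non-trivial step is the invocation of the classical existence of deterministic optimal policies for finite-horizon scalar MDPs. This result does not depend on regularity of the process and follows by a standard backward induction, so it fits seamlessly with the general setting of the preceding propositions. The remainder of the argument is a straightforward composition of Proposition \ref{prop5}, Corollary \ref{cor3}, and Theorem \ref{centralthm}.
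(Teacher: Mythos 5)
Your proof is correct, but it takes a genuinely different route from the paper. The paper obtains this corollary as a byproduct of the structural correspondence between deterministic policies and vertices of $P$ developed in Section \ref{sect_general_case}: Proposition \ref{prop5} supplies an efficient basic feasible solution, and the $\epsilon$-perturbation arguments of Proposition \ref{new1}, the proposition following it, and Proposition \ref{prop7} show that (regular representations of) vertices of $P$ are exactly the images $x_\pi$ of deterministic policies, even without process regularity. You instead bypass the vertex analysis entirely: scalarize with strictly positive weights via Corollary \ref{cor3}, observe that the weighted LP is the LP of the scalar finite-horizon MDP with reward $p^{T}R$, and import the classical fact that such an MDP has a deterministic Markovian optimal policy by backward induction; running Corollary \ref{cor3} in the reverse direction then gives efficiency. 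This is sound (the deterministic policy produced by backward induction is Markovian and optimal among all randomized Markovian policies, so the comparison with all of $\Pi$ is legitimate, and $P$ bounded guarantees the weighted LP is solvable), and it is shorter and immune to the regularity and degeneracy complications — indeed it could be stated immediately after Corollary \ref{cor3} in Section \ref{sect_consequences}. What it does not deliver is the bijection between deterministic policies and vertices, which is what the enumeration algorithm of Section \ref{sect_algo} actually requires; the paper's heavier machinery earns that. Two small remarks: your first step is dispensable, since any fixed $p>0$ yields a bounded scalar LP whose optimal solutions are efficient, so you need not first produce $x^*$; and reference \cite{4} is Bhattacharya and Kharoufeh, not Kallenberg.
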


Without process regularity we cannot exclude the possibility that a vertex $x\in P$
has a basic feasible representation which is {\it not} regular. 
It turns out that in this case $x$ has yet  another basic representations which {\it is} regular.

\begin{proposition}
\label{prop7}
    Every vertex $x$ of $P$ admits a representation as a regular basic feasible solution.
\end{proposition}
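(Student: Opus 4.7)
The plan is to show that any vertex $x$ of $P$ corresponds, via $x = x_\pi$, to a policy $\pi = \pi^x$ (guaranteed by Proposition \ref{prop2}) that is \emph{deterministic at every reachable state-epoch pair}. Once this is established, defining $a_{st}$ as the unique action chosen at reachable $(s,t)$ with $t<T$, and $a_{st}=1$ at unreachable ones (consistent with $\pi$ thanks to regularity), identifies $x$ with the regular basic feasible solution attached to $(a_{st})$ via Lemma \ref{lem9}: condition (${\rm d}'$) follows from Lemma \ref{lem2} at reachable pairs (the unique $a_{st}$ with $q(a_{st}|s,\pi_t)=1$ forces $x_t(s,a')=0$ for $a'\neq a_{st}$) and Lemma \ref{lem1} at unreachable ones ($x_t(s,a')=0$ for all $a'$), while (${\rm a}'$)--(${\rm c}'$) follow from $x\in P$; uniqueness in Lemma \ref{lem9} then pins $x$ down.

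The heart of the argument is therefore to show that $\pi$ is deterministic at every reachable $(s_0,t_0)$ with $t_0<T$. I would proceed by contradiction. If distinct $a_1,a_2\in \mathbb{A}_{s_0}$ both satisfied $q(a_1|s_0,\pi_{t_0})>0$ and $q(a_2|s_0,\pi_{t_0})>0$, I would construct policies $\pi^{\pm}$ that agree with $\pi$ everywhere except at $(s_0,t_0)$, where they shift a small mass $\pm\delta$ between $a_1$ and $a_2$, with $\delta$ chosen small enough that all action-probabilities stay in $(0,1)$. The aim is to show $x_\pi = \tfrac{1}{2}(x_{\pi^+} + x_{\pi^-})$ with $x_{\pi^+},x_{\pi^-}\in P$ and $x_{\pi^{\pm}}\neq x_\pi$, contradicting extremality of $x$.

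The inequality $x_{\pi^{\pm}}\neq x_\pi$ is clear: the $(t_0,s_0,a_1)$-component equals $x_{\pi,t_0}(s_0,a_1) \pm \delta \sum_j \alpha(j)\mathbb{P}^\pi(\textbf{S}_{t_0}=s_0|\textbf{S}_1=j)$, and the sum is strictly positive by reachability of $(s_0,t_0)$. For the averaging identity, epochs $t\leqq t_0$ are immediate from the symmetric construction, and for $t>t_0$ I would propagate by induction on the epoch, exploiting the recursion that expresses $\mathbb{P}^{\pi}(\textbf{S}_{t+1}=j|\textbf{S}_1=i)$ linearly in $\mathbb{P}^{\pi}(\textbf{S}_t=\cdot|\textbf{S}_1=i)$ together with the fact that $\pi,\pi^+,\pi^-$ share decision rules at epochs beyond $t_0$.

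The main obstacle I anticipate is precisely this inductive propagation: one must show that the differences $\mathbb{P}^{\pi^{\pm}}(\textbf{S}_t=s|\textbf{S}_1=j)-\mathbb{P}^{\pi}(\textbf{S}_t=s|\textbf{S}_1=j)$ are equal and opposite at every epoch $t>t_0$ and every $(s,j)$, so that they cancel upon averaging the two perturbed state-action frequency vectors. The calculation itself is routine, since the recursion is linear and the factors $q$ and $p$ are common to all three policies at $t>t_0$, but the bookkeeping across all remaining epochs, including the terminal-epoch relation~(c), is where care is required.
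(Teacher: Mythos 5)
Your argument is correct, but it takes a genuinely different route from the paper's. The paper proves Proposition \ref{prop7} by perturbing the transition probabilities to obtain regular approximating processes, invoking Hoffman's theorem to produce $x^\epsilon \in P_\epsilon$ with $x^\epsilon \to x$, decomposing each $x^\epsilon$ into vertices of $P_\epsilon$ (which are regular basic feasible by the regular-case theory), fixing bases along a subsequence, and using extremality of $x$ in the limit to force all limit vertices to coincide with $x$. You instead work entirely within the original process: you show that the regular policy $\pi^x$ attached to a vertex must be deterministic at every reachable state-epoch pair, by splitting any randomization at a reachable $(s_0,t_0)$ into two perturbed policies $\pi^{\pm}$ and exhibiting $x_\pi$ as the midpoint of $x_{\pi^+}\neq x_{\pi^-}$ in $P$; the identification with a regular basic feasible solution then drops out of Lemma \ref{lem9} exactly as you describe. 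The key computation — that the differences of the state distributions under $\pi^{\pm}$ are equal and opposite at $t_0$ and remain so under the linear propagation at epochs $t>t_0$, where the three policies share decision rules — is routine and closes correctly (including epoch $T$ via relation (c)). Your approach is more elementary and self-contained: it avoids Hoffman's theorem, the compactness/subsequence extraction, and the continuity-of-$A^\epsilon$ bookkeeping, and as a bonus it directly yields the corollary that vertices of $P$ correspond to policies that are deterministic up to equivalence. What the paper's approximation scheme buys in exchange is a uniform device that is reused across Propositions \ref{new1}--\ref{prop7} to transfer the entire regular-case analysis (including the matrix/basis structure of the appendix) to the non-regular setting in one stroke.
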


\begin{proof}
    We have to show that if $x$ is a vertex with a basis $B$ 
    which is not obtained as in the appendix, respectively, not according to (${\rm a}'$) - (${\rm d}'$),    then there is for this same vertex another basis representation which
    {\it does} have that form.

    We consider the approximations $(\Pi,\mathbb P_\epsilon,R)$ used before. 
    Each gives rise to a polyhedron $P_\epsilon = \{x: A^\epsilon x=b, x\geqq 0\}$, which
    approximates $P = \{x: Ax=b, x\geqq 0\}$ as $\epsilon \to 0$ due to
    $A^\epsilon \to A$ as $\epsilon \to 0$. To make this precise, 
    since the structure of the $A^\epsilon$ is such that we know that the
    $P_\epsilon$ are all non-empty, there exist for the given $x\in P$ by Hoffman's theorem \cite{11} elements 
    $x^\epsilon \in P_\epsilon$ with $x^\epsilon \to x$. While these
    $x^\epsilon$ need not be vertices, there do exist vertices $x^{\epsilon,1},\dots,x^{\epsilon,r}$
    of $P_\epsilon$
    such that $x^\epsilon = \sum_{i=1}^r \lambda_i^\epsilon x^{\epsilon,i}$ with
    $\lambda_i^\epsilon\geqq 0$ and $\sum_{i=1}^r \lambda_i^\epsilon=1$. The number $r$ may be fixed,
    as the number of vertices in every $P_\epsilon$ cannot exceed ${n \choose m}$. Since the approximating processes
    are regular, each of these $x^{\epsilon,i}$ has a basis representation involving basic
    columns $B_{\epsilon,i}$ chosen as in Lemma {\rm \ref{bfs_unique_action}}.    

    Since $\bigcup_{\epsilon > 0} P_\epsilon$ is bounded,  we may extract 
    a subsequence such that $x^{\epsilon,i} \to x^i$, $\lambda_i^\epsilon \to \lambda_i$, and in addition,
    $B_{\epsilon,i} = B_i$  for every $i$.
    Since $A^\epsilon x^{\epsilon,i} = b$, $x_{\epsilon,i} \geqq 0$, we get $Ax^i=b$, $x^i\geqq0$, i.e.,
    $x^i \in P$. Moreover $\sum_{i=1}^r \lambda_i^\epsilon x^{\epsilon,i} \to\sum_{i=1}^r \lambda_i x^i$, which gives $x=\sum_{i=1}^r \lambda_i x^i$. But $x$ is a vertex of 
    $P$, and hence cannot be a proper convex combination of elements of $P$, which implies
    $x^i=x$. Now all $x^{\epsilon,i}$ are basic feasible with the same basis ${A}_{B_i}$. Then this is also
    true for $x^i$. Hence ${A}_{B_i}$ is a basis for $x$.
    \end{proof}


\begin{corollary}
A policy
$\pi \in \Pi$ is deterministic up to equivalence ($\sim$) if and only if $x_{\pi}$ is a vertex of 
$P$.
A state-action frequency vector
$x \in P$ is a vertex if and only if the regular policy $\pi^x$ is deterministic. 
\end{corollary}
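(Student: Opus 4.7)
The plan is to assemble the corollary directly from the three preceding results, namely Proposition \ref{new1} (deterministic $\pi$ yields a vertex $x_\pi$), the unnamed proposition that a vertex with an associated regular basic feasible representation corresponds to a deterministic regular policy, and Proposition \ref{prop7} (every vertex admits a regular basic feasible representation). The whole statement is a bookkeeping exercise that strings these together, combined with the characterization from Section \ref{sect_LP} that $\pi \sim \pi'$ is equivalent to $x_\pi = x_{\pi'}$.

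For the first equivalence, the forward direction is immediate from Proposition \ref{new1}: if $\pi \in \Pi^D$ (and hence also if $\pi$ is deterministic only up to $\sim$, by picking a deterministic representative in its equivalence class), then $x_\pi$ is a vertex of $P$. Here I would note that state-action frequencies are $\sim$-invariant, so the statement $x_\pi$ is a vertex depends only on the equivalence class. For the converse, suppose $x_\pi$ is a vertex of $P$. By Proposition \ref{prop7}, $x_\pi$ admits a representation as a regular basic feasible solution. Then the proposition preceding Proposition \ref{prop7} produces a (regular) deterministic policy $\pi' \in \Pi^D$ with $x_{\pi'} = x_\pi$. Since policies with identical state-action frequency vectors are equivalent, $\pi \sim \pi'$, i.e., $\pi$ is deterministic up to equivalence.

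For the second equivalence, if $\pi^x$ is deterministic then Proposition \ref{new1} applied to $\pi^x$ shows $x = x_{\pi^x}$ is a vertex. Conversely, if $x$ is a vertex then, by Proposition \ref{prop7} and the same proposition invoked above, there exists a regular deterministic policy $\pi'$ with $x_{\pi'} = x$. Since $\pi^x$ is by construction the unique regular policy mapping to $x$, we conclude $\pi^x = \pi'$, hence $\pi^x$ is deterministic.

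There is no real obstacle: every ingredient has been established. The only minor care point is keeping the distinction between a policy being deterministic on the nose and being deterministic \emph{up to} $\sim$, and tracking that the return formula (\ref{pi_x_equation}) singles out a canonical regular representative $\pi^x$ inside each equivalence class; this is what makes the second assertion sharper than the first.
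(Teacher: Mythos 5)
Your proof is correct and follows exactly the route the paper intends: the corollary is stated without proof precisely because it is the direct concatenation of Proposition \ref{new1}, the proposition on vertices with regular basic feasible representations, Proposition \ref{prop7}, the uniqueness of the regular representative $\pi^x$, and the fact that $\pi \sim \pi'$ iff $x_\pi = x_{\pi'}$. Your care in distinguishing ``deterministic on the nose'' from ``deterministic up to $\sim$'' is exactly the bookkeeping the statement requires.
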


It follows that the true number of vertices of $P$ is  card$(\Pi^D{/_{\sim}})= 
{\rm card}(\Pi_r \cap \Pi^D)$, 
which in general is less than the upper bound
$(\Pi_{s=1}^Sk_s)^{T-1}$.
This rounds up our analysis of the general case. The result is that the search can be organized
in the same way regardless of whether the process is regular or not. 
The details are given in the next section.
\\

\section{An algorithm for enumerating efficient deterministic policies}
\label{sect_algo}

In \cite{6}, an algorithm is developed for enumerating the efficient basic feasible solutions of a vector linear program 
\begin{equation*}
\begin{aligned}
\operatorname{V-max} \quad & Cx, & \textrm{s.t.} \quad Ax = b,\ x \geqq 0.\\
\end{aligned}
\end{equation*} 
In this section, we describe a slightly adjusted version of this algorithm which, relying on the main theorem, we use for the search for efficient deterministic policies. 


Essential to the algorithm is the notion of \textit{adjacency}. We say that two basic feasible solutions of (\ref{LP})
are adjacent if their corresponding vertices in $P$ are joined by an
edge.  
In terms of the simplex algorithm, this means the following:

\begin{lemma} 
\label{pivot}
Let $x,x'\in P$ be vertices represented as regular basic feasible solutions with bases $B$, $B'$,
where for every $t=1,\dots,T-1$ and $s\in \mathbb{S}$, $B$ selects a unique $a_{st}\in \mathbb{A}_s$ such that
$x_t(s,a_{st}) \geq 0$, while $B'$ selects a unique $a'_{st}\in \mathbb{A}_s$ with $x_t'(s,a'_{st}) \geq 0$.
Suppose there exists a sole $(s,t)$ where $a_{st} \not = a'_{st}$, while $a_{s't'}=a'_{s't'}$ for all
other $(s',t') \not=(s,t)$. 
 Then $B \to B'$ corresponds to a simplex pivot, and either the  vertices $x,x'$ are  adjacent, or
$x=x'$. The latter may only happen when the vertex $x$ is degenerate and 
is excluded when the process is regular.
\end{lemma}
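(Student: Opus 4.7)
My plan is to exploit the explicit description of a regular basis given in Lemma \ref{lem9} and in the appendix: a regular basis selects exactly one action-column per $(s,t)$-block (the column corresponding to $a_{st}$) together with the $S$ terminal columns indexed by the $x_T(j)$. Under the hypothesis that $a_{s't'}=a'_{s't'}$ for every $(s',t')\neq (s,t)$, the bases $B$ and $B'$ share all columns except the one indexed by $(s,t,a_{st})$, which leaves $B$ and is replaced by the column $(s,t,a'_{st})$ to form $B'$. By definition, this single-column exchange is one simplex pivot step.

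Next I would invoke the standard dichotomy for simplex pivots: exchanging a single column between feasible bases $B$ and $B'$ yields two basic feasible solutions whose associated vertices are either joined by an edge of the polyhedron $P$ (the non-degenerate case, where the entering variable takes a strictly positive value), or they coincide as points of $P$ (the degenerate case, in which the leaving variable had value zero and the two bases are simply alternative representations of the same vertex). This is precisely the dichotomy claimed in the lemma, and its verification is a direct appeal to general LP theory rather than anything specific to the vMDP.

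Finally, for the last assertion I would apply Corollary \ref{cor4}: when the vMDP is regular, every basic feasible solution of (\ref{LP}) has exactly $m$ nonzero components, hence is non-degenerate. No degenerate pivot is therefore possible, which rules out $x=x'$ and forces $x$ and $x'$ to be distinct adjacent vertices.

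The only delicate point is interpreting the weak inequality $x_t(s,a_{st})\geq 0$ in the hypothesis: allowing equality is exactly what makes room for the pivoting variable to vanish, producing the degenerate collapse $x=x'$ in the non-regular case. Keeping this interpretation straight is the main subtlety; once it is in hand, the argument is a transcription of standard simplex bookkeeping into the notation and structure set up in Sections \ref{sect_regular_case} and \ref{sect_general_case}.
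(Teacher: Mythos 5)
Your proof is correct and coincides with the justification the paper intends but leaves implicit: the lemma is stated without proof, and the appendix only remarks that such pivots amount to exchanging one column within a $1\times k_s$-sub-block of a $\Sigma$-block. Your three ingredients --- identifying the single-column exchange as a simplex pivot, invoking the standard ``adjacent or equal'' dichotomy for feasible bases differing in one column, and excluding the degenerate collapse via Corollary \ref{cor4} in the regular case --- are exactly what is needed.
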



Starting from an initial efficient basic feasible solution, Ecker and Kouada \cite{6} proceed by examining the adjacent vertices of each vertex 
already identified as efficient. This requires an efficiency test to be applied at each adjacent vertex $x$ represented as the basic feasible solution $x=(x_B,0)$ with partition $(B,N)$. 
Partitioning the reward matrix $C$ in (\ref{vLP}) as $Cx = C_Bx_B + C_N x_N$, we have
 

\begin{theorem}
\label{thm_cor22}
Let $x$ be a basic feasible solution of {\rm (\ref{LP})} with basic variables $B$. Let $Q = \{i \in B: x_{i} = 0\}$. Write $R = C_{B}A_B^{-1}A_N - C_N$ and $Y = -A_B^{-1}A_N$. Then $x$ is efficient if and only if the scalar LP 
\begin{equation}
\label{test}
\max\biggl\{\sum_{j = 1}^{k}v_j: Ru + v = 0;\ Y_iu - s_i = 0,\ i \in Q;\ v \geqq 0;\ u \geqq 0;\ s \geqq 0\biggr\}
\end{equation} is bounded, where $v \in \mathbb{R}^k$, $u \in \mathbb{R}^{n-m}$, $s \in \mathbb{R}^{|Q|}$ and $Y_i$ signifies the $i$th row of $Y$ for all $i \in Q$.
\end{theorem}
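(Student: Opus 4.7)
The plan is to use the standard direction-based reformulation of efficiency. For a linear objective $Cx$ over a polyhedron $P$, a point $x \in P$ fails to be efficient if and only if there exists a feasible direction $d$ from $x$ satisfying $Cd \geqq 0$ and $Cd \neq 0$: one implication follows by taking $d = x' - x$ for a dominating $x' \in P$, and the other by setting $x' = x + t d$ for $t > 0$ sufficiently small so that $x'$ remains in $P$. This reduces the efficiency test at the basic feasible solution $x = (x_B, 0)$ to a question about its cone of feasible directions.

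I would next parameterize this cone. The equation $Ad = 0$ forces $d_B = -A_B^{-1} A_N d_N = Y d_N$. To keep $x + t d$ non-negative for small $t > 0$ one needs $d_N \geqq 0$ (since $x_N = 0$) and $(d_B)_i \geqq 0$ for every degenerate basic index $i \in Q$, i.e., $Y_i d_N \geqq 0$. Writing $u = d_N$, the admissible directions are exactly $d = (Yu, u)$ with $u \geqq 0$ and $Y_i u \geqq 0$ for $i \in Q$. A brief computation gives
\begin{equation*}
Cd = C_B Y u + C_N u = -(C_B A_B^{-1} A_N - C_N)\, u = -R u.
\end{equation*}
Hence $x$ is efficient if and only if the system $u \geqq 0$, $Y_i u \geqq 0$ for $i \in Q$, $Ru \leqq 0$, $Ru \neq 0$ has no solution.

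Finally, I would relate this non-existence to the boundedness of the auxiliary LP. Introducing slacks $v = -Ru$ and $s_i = Y_i u$ turns the constraints into exactly those of (\ref{test}); the feasible set is a convex cone containing the origin, so the supremum of $\sum_j v_j$ is either $0$ or $+\infty$. Unboundedness produces some feasible $u^{*}$ with $-Ru^{*} \geqq 0$ and $-Ru^{*} \neq 0$, hence a direction dominating $x$ in the vLP, ruling out efficiency; conversely, boundedness forces $-Ru = 0$ throughout the cone, so no dominating direction can exist. The main technical care needed is the treatment of the degeneracy set $Q$: without the constraints $Y_i u \geqq 0$ for $i \in Q$, one would admit directions that immediately drive an already-zero basic variable negative, and the characterization would fail. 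A secondary but standard point is the equivalence between \emph{no dominating point} and \emph{no dominating direction}, which relies only on the convexity of $P$ and the linearity of $C$.
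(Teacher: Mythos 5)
Your proof is correct. Note that the paper does not actually prove Theorem \ref{thm_cor22}: it imports the result wholesale from Evans and Steuer, stating only that ``this test is justified by [7, Lemma 2.4].'' What you have written is, in effect, a self-contained reconstruction of that cited lemma: the reduction of efficiency to the non-existence of an improving feasible direction, the parameterization of the feasible-direction cone at $x=(x_B,0)$ by $u=d_N\geqq 0$ with $d_B=Yu$ and the extra constraints $Y_iu\geqq 0$ only on the degenerate set $Q$, the identity $Cd=-Ru$, and the observation that over a homogeneous cone the supremum of $\sum_j v_j$ is either $0$ or $+\infty$. All steps check out, including the two points that genuinely require care: the degenerate indices $Q$ (omitting $Y_iu\geqq 0$ there would admit directions leaving $P$ immediately) and the equivalence of ``no dominating point'' with ``no dominating direction,'' which is legitimate here because $P$ is convex and $C$ is linear. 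Your cone dichotomy also recovers, as a byproduct, the paper's remark following the theorem that $(u,v)=(0,0)$ is always feasible and is optimal precisely when $x$ is efficient, so the value of (\ref{test}) is $0$ in the bounded case. In short: the approach is the standard one underlying the citation, but unlike the paper you actually supply the argument, and it is sound.
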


This test is justified by \cite[Lemma 2.4]{7}.  
Contrary to their original statement, our case does not require the auxiliary program (\ref{test}) to be consistent, nor that its optimal value be zero. Actually, the program has always a feasible solution in $(u, v) = (0, 0)$, which can easily be shown to be optimal when $x$ is efficient. In practice this means that the simplex method, if used to solve (\ref{test}), can be started directly in Phase II.

\begin{remark}For a non-degenerate $x$, such as is invariably the case when the vMDP is regular (Corollary {\rm \ref{cor4}}), $Q$ is empty and the auxiliary LP reduces to 
\begin{equation*}
\label{test_regular}
\max\biggl\{\sum_{j = 1}^{k}v_j: Ru + v = 0;\ v \geqq 0;\ u \geqq 0 \biggr\},
\end{equation*}
with $v \in \mathbb{R}^k$ and $u \in \mathbb{R}^{n-m}$.
\end{remark}

A useful implication of Theorem \ref{thm_cor22} is that efficient deterministic policies can be characterized simply in terms of the boundedness of a single-objective LP.

\begin{corollary}
\label{cor_thm_cor22}
Let $\pi$ be a deterministic policy. Then $\pi$ is efficient if and only if the scalar LP {\rm (\ref{test})} associated with $x_{\pi}$ is bounded. 
\end{corollary}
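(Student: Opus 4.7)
The plan is to chain together three results already established in the excerpt, so that the corollary reduces to a direct application. First, I would invoke Theorem \ref{centralthm}, which gives the equivalence between efficient policies of (\ref{program}) and efficient solutions of (\ref{vLP}); in particular, $\pi$ is efficient if and only if $x_\pi \in P$ is efficient for (\ref{vLP}).

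Next, I would use the fact that a deterministic policy $\pi$ gives rise to a state-action frequency vector $x_\pi$ that is a basic feasible solution of (\ref{vLP}). In the regular case this is Proposition \ref{prop4}; in the general case it is Proposition \ref{new1}, where it is shown that $x_\pi$ is a vertex of $P$ admitting a representation as a regular basic feasible solution. Thus, regardless of process regularity, $x_\pi$ qualifies as a basic feasible solution to which Theorem \ref{thm_cor22} may be applied.

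Finally, I would apply Theorem \ref{thm_cor22} to $x = x_\pi$: this theorem asserts that a basic feasible solution of (\ref{LP}) is efficient if and only if the auxiliary scalar LP (\ref{test}), built from the partition $(B,N)$ associated with $x$, is bounded. Combining this equivalence with the one coming from Theorem \ref{centralthm} gives exactly the statement of the corollary: $\pi$ is efficient if and only if (\ref{test}) associated with $x_\pi$ is bounded.

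There is no genuine obstacle in this argument, as the corollary is a straightforward consequence of previously established results. The only minor care needed is in the non-regular case, where $x_\pi$ may be a degenerate vertex with several bases; however, Theorem \ref{thm_cor22} is formulated in terms of \emph{any} basic feasible representation of $x_\pi$ (with its attendant set $Q$ of degenerate basic indices), so picking one such representation suffices to set up (\ref{test}) and conclude.
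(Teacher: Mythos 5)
Your proposal is correct and matches the paper's intent exactly: the paper states this corollary without proof as an immediate consequence of Theorem \ref{thm_cor22} together with Theorem \ref{centralthm} and the fact that $x_\pi$ is a (regular) basic feasible solution for deterministic $\pi$ (Proposition \ref{prop4}, resp.\ Proposition \ref{new1}). Your remark about choosing one basic representation in the degenerate, non-regular case is the right way to handle the only point the paper leaves implicit.
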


We are now in a position to outline the algorithm. The algorithm 
maintains a list $\mathscr L$ of tasks still to be processed, and a list $\mathscr E$ where efficient 
vertices
found on the way are
kept.

\begin{algorithm}
\caption{Enumerating efficient basic feasible solutions for (vLP)\label{algo}}
\noindent
\fbox{
\begin{minipage}{.97\textwidth}
\begin{algorithmic}
\INPUT Initial efficient regular basic feasible solution for  vertex $x^{0}$.
\OUTPUT List $\mathscr E$ of all efficient vertices with associated regular basic feasible solutions.
\STEP{Initialize} 
Put
$\mathscr L=\{x^{0}\}$ and $\mathscr E = \{x^0\}$.
\STEP{Stopping} 
If $\mathscr L=\emptyset$, stop and return $\mathscr E$. Otherwise continue.
\STEP{Active vertex} 
Choose new active vertex $x\in \mathscr L$ with associated regular basic feasible solution $(B,N)$, and remove it from $\mathscr L$. 
\STEP{Test adjacent vertices} 
Given current active $x$, test all adjacent vertices $x'\not\in \mathscr E$ with associated regular basic feasible solutions $(B',N')$ for efficiency. 
Include those newly identified as efficient in $\mathscr E$ and in the list 
$\mathscr L$. Then return to step 2.
\end{algorithmic}
\end{minipage}
}
\end{algorithm}

In a preliminary phase we have to supply the algorithm with an initial efficient basic feasible solution $x^{0}$. This is achieved as follows. We choose arbitrary positive weights
$p_i$ for the rewards $c_i^Tx$ in (\ref{LP}), then we maximize their weighted linear combination. The resulting scalar program is bounded over $P$ and has an optimal basic feasible solution, which can be found using the simplex method. By Corollary 1.4 of \cite{7}, this optimal solution must be an efficient basic feasible solution of (\ref{LP}). 

To check whether, in step 4, a vertex $x'$ is  adjacent to the current active vertex $x$, we use Lemma 
\ref{pivot}.
Recall that for regular processes, vertices and regular basic feasible solutions are in one-to-one correspondence. Without regularity we are still allowed to restrict the search to regular basic feasible
solutions by Proposition \ref{prop7}, but we may encounter degenerate vertices,
which may have several regular basic feasible representations. 
Therefore,  before applying Lemma \ref{pivot}, we make sure that changing bases
$B \to B'$ gives also  a change of vertices, and to ultimately force 
this we use an anti-cycling rule.

To test adjacent  $x'$ for efficiency 
we rely on Theorem \ref{thm_cor22}. 
When the auxiliary program (\ref{test}) associated with $x'$ is bounded, we classify $x'$ as efficient and include it in the list $\mathscr L$ of tasks still to be processed. Note that the test is applied to $(B',N')$, obtained from $(B,N)$ through a pivot.


Once all efficient basic feasible solutions $x\in \mathscr E$ have been returned, the corresponding 
set of efficient deterministic policies 
$\Pi^{DE}=\{\pi^x: x \in \mathscr E\}$
is computed offline using Equation (\ref{pi_x_equation}).


\begin{proposition}
    The algorithm terminates finitely and returns the set of efficient vertices, each specified by an associated regular basic feasible solution.
\end{proposition}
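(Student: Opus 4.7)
The plan is to split the claim into two parts: finite termination and correctness (that every efficient vertex is eventually placed in $\mathscr E$, and that nothing non-efficient is).

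For finite termination, I would argue as follows. The polyhedron $P$ has finitely many vertices, and by Proposition \ref{prop7} each vertex admits at least one regular basic feasible representation, of which there are at most $\bigl(\prod_{s \in \mathbb{S}} k_s\bigr)^{T-1}$. A vertex is inserted into $\mathscr L$ only when it is first identified as efficient and is simultaneously put into $\mathscr E$, and it is removed from $\mathscr L$ at the moment it is picked as the active vertex in step 3. Since the membership test ``$x' \notin \mathscr E$'' in step 4 prevents re-insertion, each efficient vertex can cycle through $\mathscr L$ at most once. The potential for cycling among degenerate regular bases that represent the \emph{same} vertex is neutralized by the anti-cycling rule invoked in step 4 together with Lemma \ref{pivot}, which guarantees that a pivot $B \to B'$ between two regular bases representing distinct vertices always corresponds to moving along an edge. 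Hence the outer loop halts after at most $|\mathscr E|$ iterations.

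For correctness, the inclusion $\mathscr E \subseteq P_E$ is immediate from Theorem \ref{thm_cor22}, since every vertex placed into $\mathscr E$ passes the boundedness test of the auxiliary LP (\ref{test}). The reverse inclusion is the substantive part. I would invoke the structural fact, cited from \cite{3} in the introduction, that the efficient set $P_E$ of a vLP is a connected union of faces of $P$. Consequently, the subgraph of the vertex–edge graph of $P$ induced by the efficient vertices is connected: given two efficient vertices $x, x^\star$, there is a sequence of efficient vertices $x = y^0, y^1, \dots, y^N = x^\star$ with each $y^i, y^{i+1}$ joined by an edge of $P$. By Lemma \ref{pivot} and Proposition \ref{prop7}, each such edge can be realized by a pivot between regular bases (possibly after choosing suitable representations at degenerate endpoints). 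Starting from $x^0 \in \mathscr E$, an easy induction on $i$ shows that every $y^i$ is eventually added to $\mathscr E$: when $y^i$ becomes the active vertex, step 4 enumerates all adjacent vertices through regular-basis pivots, applies the efficiency test from Theorem \ref{thm_cor22}, and inserts $y^{i+1}$ into $\mathscr E$ (if not already there). Hence every efficient vertex is reached.

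The main obstacle I anticipate is the interplay between degeneracy and the restriction to regular bases. In the absence of process regularity, a single efficient vertex may admit several regular basic feasible representations, and a given edge of $P$ emanating from that vertex may be visible only through some of those representations. One must therefore verify that step 4, when combined with the anti-cycling rule, can still traverse every edge connecting two efficient vertices, possibly by cycling through equivalent degenerate bases at the same vertex before advancing. This is exactly where Lemma \ref{pivot} (adjacency interpreted as a pivot, with the degenerate-pivot case distinguished) and Proposition \ref{prop7} (existence of a regular representation at every vertex) are needed in tandem; the rest of the argument is bookkeeping on $\mathscr L$ and $\mathscr E$.
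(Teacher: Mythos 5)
Your proof is correct and follows essentially the same route as the paper: both reduce the completeness claim to the connectedness of the efficient set (a finite union of faces of $P$), which yields a path of adjacent efficient vertices from $x^0$ to any given efficient vertex, and then traverse that path --- you by forward induction, the paper by the contrapositive (if the endpoint were missed, none of the intermediate vertices could ever have entered $\mathscr L$, contradicting that the neighbours of the first active vertex $x^0$ are added). The degeneracy/regular-basis subtlety you flag at the end is likewise left implicit in the paper's own argument, which simply defers to Lemma \ref{pivot}, Proposition \ref{prop7} and the anti-cycling rule.
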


\begin{proof}
    The set $\mathscr E(P)$ of efficient solutions of (vLP) is connected, as follows from
    general results \cite[Thm. 3.1, Thm. 3.2]{20} on vector optimization. 
    Whenever $x\in \mathscr E(P)$ is in the relative interior of a face $F$ of $P$, then $F \subseteq \mathscr E(P)$, and then all vertices of $F$
    are efficient vertices of $P$.    
    In consequence,
    $\mathscr E(P)$ is a finite union of faces of $P$,
    see also \cite{6, 2}.  Therefore, unless there exists a single efficient vertex $x^0$, in which case $Cx^0$ dominates the value set, any two efficient vertices can be joined by a path
    passing along edges between adjacent efficient vertices. 
    
    Now suppose there exists an efficient vertex $x \not= x^0$ which is not found by the algorithm. 
    Consider a path $x^0,x^1,\dots,x^k=x$ of adjacent efficient vertices joining $x^0$ and $x$. Since
    $x=x^k\not\in \mathscr E$, the vertex $x^{k-1}$ could never have been an element of the list $\mathscr L$, as otherwise $x^k$, being adjacent to $x^{k-1}$, would have been
    identified as efficient and added to $\mathscr L$ and $\mathscr E$, the latest when $x^{k-1}$ would have been chosen as the active vertex. Repeating the argument going backwards,
    we see that none of the intermediate elements of the path could have been added to $\mathscr L$. 
    But clearly $x^1$ {\it has} been added, because $x^0$ was the first active vertex, and $x^1$ is one of its efficient neighbours.
    \end{proof}

A crucial feature of the algorithm is that it does not necessarily test all vertices of $P$; 
rather, it confines itself to vertices adjacent to previously detected efficient ones. 


Recall from Corollary \ref{cor3} that a policy $\pi$ is efficient if and only if there exists a positive vector $p = (p_1, ..., p_k)$ such that $x_{\pi}$ solves the LP \begin{equation}
\tag{$P_p$}
\label{k_lp}
\begin{aligned}
\max \quad & p_{1}c_1^{T}x + ... + p_{k}c_k^{T}x, & \textrm{s.t.} \quad x \in P,\\
\end{aligned}
\end{equation}
where, in the notation of Section \ref{sect_problem}, $c_{i}^T$ is the $i$th row of $C$, so that $(Cx)_i=c_{i}^{T}x$ gives the value of criterion $i$ at point $x \in P$. If we view each efficient policy as realizing a particular trade-off between the criteria, then the $p_i$ indicate the relative importance of each criterion in that trade-off. It then becomes a matter of some practical interest to determine these scalars. In this regard, assume that we are given an efficient deterministic policy $\pi$, which, for instance, has been found by the algorithm of this section. Then $x_{\pi}$ is an optimal 
basic feasible solution of (\ref{k_lp}) for some positive $p = (p_1, ..., p_k)$, which we now determine. Notice that the objective function of (\ref{k_lp}) is $x \mapsto p^{T}Cx$. If $A_B$ is the basis matrix associated with $x_{\pi}$ and $A_N$ is its nonbasic complementary, then the Karush-Kuhn-Tucker conditions for LPs \cite{14} imply 
\begin{equation}
\label{KKT}
 A_N^{T}A_B^{-T}(p^{T}C)_B - (p^TC)_N \geqq 0,
\end{equation}   
where $(p^{T}C)_N$ and $(p^{T}C)_B$ are, respectively, the non-basic and the basic portions of $p^{T}C$. This is both a necessary and sufficient condition for the optimality of $x_{\pi}$. Put differently, if there is a $k$-vector $p$ such that condition (\ref{KKT}) is satisfied, then $x_{\pi}$ is optimal for (\ref{k_lp}). This motivates the following LP for the identification of the weighting factor $p$ corresponding to the (efficient deterministic) policy $\pi$:
\begin{equation}
\label{test2}
\begin{aligned}
\begin{split}
\max \quad & 0\\
\textrm{s.t.} \quad & \lambda_N  + A_N^{T}A_B^{-T}(p^{T}C)_B - (p^TC)_N= 0\\
  & \lambda_N \geqq 0  \\
  & p \geqq \epsilon,
  \end{split}
\end{aligned}
\end{equation}
with $\lambda_N \in \mathbb{R}^{n-m}$, $p \in \mathbb{R}^{k}$ and $\epsilon>0$ fixed. 
This LP will yield a positive $k$-vector $p$ such that (\ref{KKT}) is satisfied, and hence weights $p_i > 0$ which render $x_{\pi}$ optimal for (\ref{k_lp}).

Our final remark is that each of test (\ref{KKT}) and program (\ref{test2}) can be used as an efficiency test for a basic feasible $x=(x_B,0)$. An optimal solution $(\lambda_N^*,p^*)$
of (\ref{test2}) will provide weights such that $x$ is optimal for $(P_{p^*})$, with $\lambda^*=(0,\lambda_N^*)$
being an associated Lagrange multiplier for the inequality constraints $x\geqq 0$. Therefore, $p^*$ gives
the importance of each criterion $c_i^Tx$, and $\lambda_N^*$ gives the sensitivity of the active
inequality constraints $x\geqq 0$ at the solution. The Lagrange multipliers for the equality constraints
are also available as $A_B^{-T}(p^{*T}C)_B$.
\\

\section{A numerical example}
\label{sect_numeric}
We illustrate the results of the preceding sections
by way of an engineering problem discussed in \cite{22}. Suppose we have to design a two-component system where each component can be chosen from a predetermined set of alternatives. The cost of selecting a particular alternative for either component is known. We have also measurements of the reliability
of each component.
As
the cheapest designs are typically the least reliable, 
we prefer a compromise between cost and reliability, 
which requires introducing a cost for the lack of reliability, 
leading to two conflicting objectives.

We formulate this problem as a vMDP. 
The components are labeled $1$ and $2$. One component is  considered at a time, so that there are two decision epochs. The state at an epoch is the present component's label. 
Components are considered in random order, with $\alpha(s) > 0$ denoting the probability that component $s \in \{ 1, 2\}$ is designed first. We have $k_s$ design options available for component $s$, so that the action set for component $s$ is $\mathbb{A}_s = \{1, ..., k_s\}$. In our notation, we have $k = 2$, $T = 3$ and $S= 2$. Thus, in this simple example, program (\ref{LP}) has $n = 2(k_1 + k_2 + 1)$ variables and $m = 6$ constraints.

The dynamics of the problem can be modeled as follows. If component $s$ is designed first, then the probability that component $j$ will be designed second given any choice of design for $s$ equals one if $j \neq s$, and zero otherwise. That is, 
\begin{equation*}
p_1(j | s, a) = 1 - \delta_{js}
\end{equation*} 
for each $s$, $j = 1, 2$ and $a \in \mathbb{A}_s$, where $\delta_{ij}$ is the Kronecker delta. 
We let 
\begin{equation*}
p_2(j|s, a) = \frac{1}{2}
\end{equation*} 
for each $s$, $j$ and $a \in \mathbb{A}_s$, so that $\sum_{j = 1}^{2}p_2(j | s, a) = 1$ whenever $s = 1, 2$ and $a \in \mathbb{A}_s$. These terminal transition probabilities are arbitrary and have essentially no effect on the problem, since the transition to epoch $3$ occurs only after the system has been fully designed. Furthermore, as we shall see below, the terminal rewards will be defined so that the artificial state obtaining in epoch $3$ does not play a role in optimization.
  
Computing efficient policies requires a reward function reflecting the competing goals of minimizing cost and maximizing reliability. For component $s$, we denote the cost and reliability of alternative $a$ by $c_{sa}$ and $p_{sa}$, respectively. We assume functionally independent components, so that the reliability of the system as a whole is the product of the reliability of component $1$ with that of component $2$. Since rewards accrue additively, we use the logarithm of the product of the two reliabilities as a measure of system reliability. 
The vector reward for choosing design $a$ for component $s$ is then given by
\begin{equation*}
R_t(s, a) = (-c_{sa},\ \log(p_{sa})),\ t = 1, 2,
\end{equation*} with $R_3(s) = (0, 0)$, so that the value of a policy gives the vector of the expected reliability and the negative of the expected total cost of a system designed according to this policy. Indeed, with this choice of rewards, the value of a policy $\pi$ may be expressed as
\begin{equation*}
\sum_{s = 1}^{2}\alpha(s)v_2^{\pi}(s) = \sum_{s = 1}^{2}\alpha(s)\Biggl(-\mathbb{E}_{s}^{\pi}\biggl[\sum_{t = 1}^{2}c_{\textbf{S}_t\textbf{A}_t}\biggr],\ \mathbb{E}_{s}^{\pi}\biggl[\log\biggl(\prod_{t = 1}^{2}p_{\textbf{S}_t\textbf{A}_t}\biggr)\biggr]\Biggr).
\end{equation*} 

We know from the preceding sections that efficient deterministic policies exist for this problem. The reader can easily verify through Lemma \ref{unreach1}(ii) that the model is regular. In practical terms, this means that all efficient deterministic policies can be detected by our algorithm.

The algorithm was tested on randomly generated instances of the problem. Costs and reliabilities were sampled from uniform distributions with a correlation coefficient of $0.7$ for both components. The initial state was distributed according to $\alpha(1) = \alpha(2) = 0.5$. The starting basic feasible solution $x^0$ was obtained for each model following the approach outlined in the preceding section. Efficiency tests in step 4 were executed by the simplex method. 

The results are summarized in Tables \ref{table:1}, \ref{table:2} and \ref{table:3}. In Table \ref{table:1}, the instances are grouped by $k_1$ and $k_2$ into nine groups of a hundred instances each. The column $|\Pi^{ED}|$ reports the average number of efficient deterministic policies in each group. The columns $\rho_1$ and $\rho_2$ contain group averages of the empirical correlation coefficients between costs and reliabilities for component $1$ and component $2$, respectively. The standard deviations of these three sets of averages are indicated between parentheses. Table \ref{table:2} presents the data used in constructing one of the reported instances. We enumerate the efficient deterministic policies for this instance in Table \ref{table:3}. For a policy $\pi = (\pi_1, \pi_2) \in \Pi^{D}$, if $a_1 \in A_1$ and $a_2 \in A_2$ denote the unique alternatives satisfying $q(a_1 \,|\, 1, \pi_t) = q(a_2 \,|\, 2, \pi_t) = 1$ for some $t = 1, 2$, the column $\pi_t$ will read ``$(a_1, a_2)$". 
\begin{table}[!ht]
\centering
\begin{tabular}{||c c c c c c||} 
 \hline
 Group & $k_1$ & $\rho_1$ & $k_2$ & $\rho_2$ & $|\Pi^{ED}|$ \\ [0.5ex] 
 \hline\hline
 1 & 5 & .67 (\textit{.31}) & 5 & .65 (\textit{.30}) & 12.5 (\textit{3.7}) \\
 2 & 5 & .68 (\textit{.29}) & 10 & .68 (\textit{.20}) & 15.3 (\textit{4.1}) \\
 3 & 5 & .68 (\textit{.32}) & 25 & .71 (\textit{.11}) &  18.1 (\textit{4.1}) \\
 4 & 10 & .70 (\textit{.18}) & 10 & .66 (\textit{.21}) &  17.6 (\textit{3.7}) \\
 5 & 10 & .69 (\textit{.19}) & 25 & .68 (\textit{.11}) &  20.7 (\textit{4.6}) \\
 6 & 25 & .70 (\textit{.11}) & 25 & .69 (\textit{.12}) &  22.9 (\textit{6.3}) \\
 7 & 50 & .69 (\textit{.07}) & 50 & .68 (\textit{.09}) &  26.2 (\textit{6.8}) \\
 8 & 75 & .70 (\textit{.06}) & 75 & .70 (\textit{.06}) & 29.1 (\textit{8.7}) \\
 9 & 100 & .70 (\textit{.05}) & 100 & .69 (\textit{.05}) & 30.7 (\textit{7.8}) \\ [1ex]
 \hline
\end{tabular}
\caption{Results for randomly generated instances of the engineering design problem grouped by $k_1$ and $k_2$.}
\label{table:1}
\end{table}

\begin{table}[!ht]
\begin{subtable}[b]{0.45\linewidth}
\centering
\begin{tabular}{|| c c c ||}
\hline
Alternative & $c_{1a}$ & $p_{1a}$ \\ [0.5ex]
\hline\hline
$1$           & $0.70$      & $0.48$      \\
$2$           & $0.33$      & $0.21$      \\
$3$           & $0.83$      & $0.58$      \\
$4$           & $0.60$      & $0.81$      \\
$5$           & $0.29$      & $0.68$     \\ [1ex]
\hline
\end{tabular}
\caption{Component 1}
\end{subtable}
\begin{subtable}[b]{0.45\linewidth}
\centering
\begin{tabular}{|| c c c ||}
\hline
Alternative & $c_{2a}$ & $p_{2a}$ \\ [0.5ex]
\hline\hline
$1$           & $0.48$      & $0.56$      \\
$2$           & $0.42$      & $0.79$      \\
$3$           & $0.39$      & $0.46$      \\
$4$           & $0.76$      & $0.38$      \\
$5$           & $0.98$      & $0.90$     \\ [1ex]
\hline
\end{tabular}
\caption{Component 2}
\end{subtable}
\caption{Costs and reliabilities for an example with $k_1 = k_2 = 5$.}
\label{table:2}
\end{table}

\begin{table}[!ht]
\centering
\begin{tabular}{||c c c c||} 
 \hline
 Policy & $\pi_1$ & $\pi_2$ & Value\\ [0.5ex] 
 \hline\hline
 1 & (5, 2) & (5, 2) & (-0.72, -0.61) \\
 2 & (4, 2) & (5, 2) & (-0.87, -0.53) \\
 3 & (4, 2) & (4, 2) & (-1.02, -0.44) \\ 
 4 & (4, 5) & (4, 2) & (-1.30, -0.38) \\
 5 & (4, 5) & (4, 5) & (-1.58, -0.32) \\
 6 & (4, 2) & (4, 5) & (-1.30, -0.38) \\
 7 & (5, 2) & (4, 2) & (-0.87, -0.53) \\
 8 & (5, 2) & (5, 3) & (-0.70, -0.88) \\
 9 & (5, 3) & (5, 3) & (-0.68, -1.16) \\
 10 & (5, 3) & (5, 2) & (-0.70, -0.88) \\ [1ex]
 \hline
\end{tabular}
\caption{Summary of the efficient deterministic policies for the example of Table \ref{table:2}.}
\label{table:3}
\end{table}

We can see from Table \ref{table:1} that the set of efficient deterministic policies grows as the number of design options increases. Table \ref{table:3} reveals some interesting characteristics of the efficient policies in the example described in Table \ref{table:2}. First, they invariably select either the cheapest or the most reliable option for component $1$. Most policies prefer alternative $2$ for component $2$, although it is neither the cheapest nor the most reliable. This can be explained by the fact that alternative $2$ is simultaneously the second most reliable and the second cheapest option, with only a marginal differential between its cost and that of the cheapest option.
When it comes to component $1$, however, cost and reliability are more tightly correlated.

Second, the policies bear out a structural property peculiar to the 
problem. Namely, if $\pi = (\pi_1, \pi_2)$ is a non-stationary deterministic policy, then reversing the order of the decision rules yields a policy $\pi' = (\pi_2, \pi_1)$ with the same value as $\pi$. In particular, if $\pi$ is efficient, then $\pi'$ is also efficient (see policies 2 and 7, 4 and 6, 8 and 10). This property can be established formally by noting that the rewards are stationary, that the alternatives available at epoch $1$ are identical to those available at epoch $2$, and that state transitions are deterministic.
\\

\section{Conclusion}
\label{sect_conclusion}
We proposed a vector linear programming approach to the optimal control of a finite-horizon vMDP with time-heterogeneous dynamics and rewards. The optimal control problem and the vLP were shown to be equivalent. A closed form return formula allows to retrieve efficient
policies for the vMDP from efficient solutions of the vLP. 
In addition, deterministic policies were shown to correspond to vertices of the vLP constraint polyhedron. An algorithm was developed that leverages this insight for identifying all efficient deterministic policies.

While our exposition used the assumption that the initial state of the process follows a non-degenerate probability distribution $\alpha$,
simplifications are possible when, for instance, 
the process initiates in a single state $s_0 \in \mathbb{S}$. 
We would consider then the modified problem 
\begin{equation*}
\begin{aligned}
\operatorname{V-max} \quad & v^{\pi}(s_0), & \textrm{s.t.} \quad \pi \in \Pi,\\
\end{aligned}
\end{equation*}   
to which  our results extend  \textit{mutatis mutandis}. 
Based on Equation (\ref{value}), we redefine the variables in (\ref{eq2}) and (\ref{eq3}) as 
$x_{\pi, t}(j, a) = \mathbb{P}^{\pi}(\textbf{S}_t = j, \textbf{A}_t = a \,|\, \textbf{S}_1 = s_0)$ and 
$x_{\pi, T}(j) = \mathbb{P}^{\pi}(\textbf{S}_T = j \,|\, \textbf{S}_1 = s_0)$ for all $j \in \mathbb{S}$, $a \in \mathbb{A}_j$ and $t = 1, ..., T-1$. Then $v_{T}^{\pi}(s_0)$ is linear in $x_{\pi}$, and we have again an equivalent vector linear program, where the objective is obtained
in the same way as for (\ref{vLP}), with slightly different constraints. Deterministic policies that are efficient for this new problem exist and may also be 
computed by our algorithm.     


Variants of the vMDP where rewards and transition probabilities are stationary can also be treated
by our approach 
even with considerable simplifications. 
Variables of the corresponding vLP would no longer depend on time, and the constraints would reduce to those used in infinite-horizon MDPs \cite[p. 245]{16}. 
Thereby the number of variables and constraints drops to $n = \sum_{s \in \mathbb{S}}k_s + N$ and $m = 2N$, 
and considerations of process regularity are no longer needed.  
\\

\section{Appendix}

We obtain the matrix form of the linear operator (\ref{eq2}), (\ref{eq3}).
Vectorizing in the first place  $x_t(s,a)$ into vectors $x^t\in \mathbb R^K$ for every
$t=1,\dots,T-1$, where $K = k_1 + \dots + k_N$, and then $x_T(s)$ into an $N$-vector $x^T$,
we arrange
$x = (x^{1},\dots,x^{T})$ into a column  vector of length
$n=(T-1)K+N$. Then the linear operator (\ref{eq2}), (\ref{eq3}) takes the 
$m \times n$ matrix shape
\begin{align*}
A=
    \begin{bmatrix}
        \Sigma &          &  & \\
         -P_1  &  \Sigma  &        & & \\
            &      -P_2   & \Sigma      &  &  \\
            &              &\!\!\ddots & \!\!\ddots  & &\\
            &               &        &      -P_{T-2}     & \Sigma \\ 
            &               &        &             &   -P_{T-1}  & I_N
    \end{bmatrix},
\end{align*}
where $m=TN$, and 
where $\Sigma$ is a summation matrix of size $S\times K$ with $N$ blocks labelled $s\in \mathbb{S}$, each of size $1 \times k_s$. For example, for $N=3$, $k_1=2$, $k_2=3$, $k_3=2$ with $K=7$, $\Sigma$ has the form
$$
\Sigma = \begin{bmatrix}
    1&1&    \\
     & & 1&1&1&   \\
     & &  & & & 1 & 1
\end{bmatrix}.
$$
The matrices $P_{t}$ of size $S\times K$ gather the coefficients $p_t(j|s,a)$ in constraints (b) and (c)
accordingly (see Proposition \ref{prop1}). It is easily seen that each $\Sigma$ has row rank $N$, from which the rank
condition in Lemma \ref{A_rank} can also be derived. 

Those  basic sub-matrices $A_B$ we termed {\it regular} in Section \ref{sect_general_case} choose $N$ columns in each 
$\Sigma$-block,
with just
one column in each $1\times k_i$-sub-block, giving altogether $(T-1)N$ columns. For instance, in the above example we have to pick three 
such columns per $\Sigma$,  by choosing
one of columns 1,2, one of columns 3,4,5, and one of columns 6,7, with altogether $2\cdot 3 \cdot 2$
possibilities per $\Sigma$. 
These $A_B$ are lower triangular with ones in the diagonal and entries in $[-1,0]$ below the diagonal. There are $(k_1 \dots k_N)^{T-1}$ such regular
basic sub-matrices $A_B$.

Due to the triangular structure of these $A_B$ with ones in the diagonal and non-positive entries below the diagonal, and using the non-negativity of $b = (\alpha , 0)$,
we easily see that $A_B x_B = b$ implies $x_B \geqq 0$, so that regular basic matrices constructed 
are by default
basic {\it feasible}.
Simplex pivots as in Lemma \ref{pivot} correspond to exchanging just one column in one of the 
$1\times k_s$-sub-blocks of a
$\Sigma$-blocks against another column from the same $1 \times k_s$-block in the same $\Sigma$.

In view of Corollary \ref{cor8}, all basic feasible solutions in a regular process are regular, i.e.,  constructed as above. 
Note that there are basic sub-matrices
$A_B$ constructed differently, 
and those may become feasible in the absence of regularity.

While all matrices $P_t$ are typically full, the $\Sigma$ are sparse 
with only $K$ non-zeros, so that 
$A$ has $nz = (T-1)(K+NK)+N$ non-zero elements. Altogether
$A$ is therefore 
$$
\frac{(T-1)(K+NK)+N}{((T-1)K+S) TN}
$$ 
per cent filled. For $S\approx T$ this gives a sparsity of approximately $1/N$. Similarly,
basic feasible $A_B$ have $nz = (N+N^2)(T-1)+N$ non-zeros, which gives a sparsity of
$$
\frac{(N+1)(T-1)+1}{NT^2}\sim \frac{1}{T}.
$$



\noindent\rule{\linewidth}{0.4pt}
\vspace*{2cm}

\noindent ANAS MIFRANI\\
Université de Toulouse\\
Institut de Mathématiques\\
F-31062 Toulouse Cedex 9\\
France\\
Email address: anas.mifrani@math.univ-toulouse.fr\\
\\
DOMINIKUS NOLL\\
Université de Toulouse\\
Institut de Mathématiques\\
F-31062 Toulouse Cedex 9\\
France\\
Email address: dominikus.noll@math.univ-toulouse.fr\\

\end{document}